\patchcmd{\section}{\scshape}{\bfseries}{}{}
\renewcommand{\@secnumfont}{\bfseries}
\DeclareMathOperator{\Aut}{Aut}
\theoremstyle{plain}
\theoremstyle{definition}
\newtheorem{mydef}{\textbf{Definition}}[section]
\newtheorem{myeg}[mydef]{\textbf{Example}}
\newtheorem{construction}[mydef]{\textbf{Construction}}
\newtheorem{rmk}[mydef]{\textbf{Remark}}
\theoremstyle{plain}
\newtheorem{mythm}[mydef]{\textbf{Theorem}}
\newtheorem*{nothma}{\textbf{Theorem A}}
\newtheorem*{nothmb}{\textbf{Theorem B}}
\newtheorem{lem}[mydef]{\textbf{Lemma}}
\newtheorem{pro}[mydef]{\textbf{Proposition}}
\newtheorem{cor}[mydef]{\textbf{Corollary}}
\begin{document}

\title{Artin-Ihara L-functions for hypergraphs}

\author{Mason Eyler}
\address{New Paltz High School, New Paltz, NY 12561, USA}
\email{masoneyler314@gmail.com}

\author{Jaiung Jun}
\address{Department of Mathematics, SUNY New Paltz, NY 12561, USA}
\email{junj@newpaltz.edu}

	\makeatletter
	\@namedef{subjclassname@2020}{%
		\textup{2020} Mathematics Subject Classification}
	\makeatother

\subjclass[2020]{05C38(primary), 05C65 (secondary).}
\keywords{Artin-Ihara L-functions, Hypergraphs, Ihara zeta functions, Galois covering, Group action.}

\begin{abstract}
We generalize Artin-Ihara L-functions for graphs to hypergraphs by exploring several analogous notions, such as (unramified) Galois coverings and Frobenius elements. To a hypergraph $H$, one can naturally associate a bipartite graph $B_H$ encoding incidence relations of $H$. We study Artin-Ihara $L$-functions of hypergraphs $H$ by using Artin-Ihara $L$-functions of associated bipartite graphs $B_H$. As a result, we prove various properties for Artin-Ihara L-functions for hypergraphs. For instance, we prove that the Ihara zeta function of a hypergraph $H$ can be written as a product of Artin-Ihara $L$-functions. 
\end{abstract}

\maketitle

%%%%%%%%%%%%%%%% Introduction %%%%%%%%%%%%%%%%%%%%%

\section{Introduction}

Zeta functions are defined for various mathematical structures such as algebraic varieties, number fields, groups, matroids, and graphs. One intriguing aspect of zeta functions is that they encode various information about mathematical structures of interest. For example, the Euler characteristic of both an algebraic variety and a graph can be readily extracted from their corresponding zeta functions.

In several instances, zeta functions are defined by using a (suitably defined) notion of \emph{primes} such as prime numbers, prime ideals, and prime cycles (in graphs). Moreover, zeta functions often enjoy three fundamental properties: (1) \emph{Rationality}, (2) \emph{Functional equation}, and (3) Analogue of \emph{Riemann hypothesis}. For instance, the zeta function associated to a non-singular projective algebraic variety over a finite field satisfies the above three fundamental properties.

The \emph{Ihara zeta function} $\zeta_X(u)$ of a graph $X$ is defined by using prime cycles in $X$. $\zeta_X(u)$ satisfies rationality for any graph $X$, and it satisfies certain functional equations when $X$ is a regular graph. Furthermore, one can characterize the class of graphs satisfying an analogue of Riemann hypothesis, namely \emph{Ramanujan graphs}. These are examples of expander graphs, and in fact, an interesting higher dimensional analogue has been studied by A.~Kamber in \cite{kamber2016lp}.

Y.~Ihara \cite{ihara1966discrete} first associated a zeta function to a discrete torsion-free subgroup of $\textrm{PGL}_2(K)$, where $K$ is a non-Archimedean field with residue field $\mathbb{F}_q$, which can be seen as an analogue of the Selberg zeta function or the Riemann zeta function. J.~P.~Serre \cite{serre2002trees} pointed out that the Ihara zeta function is a zeta function of some regular graph. T.~Sunada \cite{sunada2006fundamental,sunada2006functions} gave the definition of the Ihara zeta function by using terminologies of graph theory and the graph theoretic proof of Ihara Theorem. K.~Hashimoto \cite{hashimoto1989zeta} gave the determinant expression of Hashimoto type for the Ihara zeta function of a general graph by using the edge matrix.

H.~Bass \cite{bass1992ihara} considered a more general case of a group $G$ acting on a locally finite tree $X$, building on the work of Hashimoto \cite{hashimoto1989zeta}, to associate a zeta function to the quotient $X/G$. Then, Bass proved an interesting result (among others) that the zeta function of $X/G$ is the reciprocal of an explicit polynomial (Theorem \ref{theorem: three-term determinant}). Later, in \cite{stark1996zeta}, H.~M.~Stark and A.~Terras provided elementary proofs for various results including Bass' determinant formula. 

One interesting facet of the story is that there are numerous graph-theoretic counterparts of theorems and notions found in number theory. For example, there is a graph theory version of Prime number theorem (\cite[Theorem 10.1]{terras2007zeta}), which can be proved by mimicking the proof of Prime number theorem for function fields \cite[Theorem 5.12]{rosen2002number} by using Ihara zeta functions in place of Hasse-Weil zeta functions.

In the framework of Stark and Terras, the Ihara zeta function of a graph may be viewed as an analogue of the Dedekind zeta function of a number field (via the Euler product). In fact, in \cite{stark2000zeta}, Stark and Terras further introduced notions of \emph{Galois coverings} and \emph{Artin-Ihara L-functions} for graphs and proved properties analogous to Artin L-functions, such as the Induction property (Theorem \ref{theorem: L function theorems for graphs}). In particular, for a Galois covering $\pi:Y \to X$ of graphs with the Galois group $G$, the Ihara zeta function $\zeta_X(u)$ (resp.~$\zeta_Y(u))$ is obtained by evaluating the Artin-Ihara L-function of the covering $\pi$ at the trivial representation (resp.~the right regular representation), proving that $\zeta_X(u)$ divides $\zeta_Y(u)$. Note that in \cite{zakharov2021zeta}, D.~Zakharov proved a similar divisibility result in a more general setting. 

A \emph{hypergraph} is a natural generalization of a graph in which an edge can have \emph{any} number of vertices. For a hypergraph $H$, one can construct a \emph{bipartite graph} $B_H$ whose vertices are the vertices and edges of $H$. The edges of $B_H$ encode the incidence relations of $H$. A natural question to ask is whether or not the theory of Ihara zeta functions for graphs can be generalized to the case of hypergraphs. 

In \cite{storm2006zeta}, C.~Storm generalized the notion of Ihara zeta functions from graphs to hypergraphs by naturally extending several key notions, thereby introducing a framework for defining Ihara zeta functions in the hypergraph context. Among many interesting results in \cite{storm2006zeta}, Storm showed that the Ihara zeta function $\zeta_H(u)$ of a hypergraph $H$ is precisely the function $\zeta_{B_H}(\sqrt{u})$, where $\zeta_{B_H}(u)$ is the Ihara zeta function of the bipartite graph $B_H$ associated to $H$. Storm also provided an example showing that there are Ihara zeta functions of hypergraphs which are not the Ihara zeta function of any graph. Moreover, Storm proved that for a $(d,r)$-regular hypergraph $X$, a ``modified Riemann hypothesis'' for the Ihara zeta function $\zeta_X(u)$ is true if and only if the hypergraph $X$ is Ramanujan in the sense of W.~Li and P.~Sol\'e \cite{sole1996spectra}.

In this paper, our goal is to introduce a notion of Artin-Ihara L-functions for hypergraphs and study their basic properties. To achieve this goal, we generalize several definitions, such as free Galois coverings (Definition \ref{definition: Galois group}) and Frobenius elements (Definition \ref{definiion: Frobenius}), to hypergraphs. We do this by exploring relations between hypergraphs and their associated bipartite graphs. For instance, we prove the association from $H$ to $B_H$ gives rise to a functor $\mathbf{B}$ from the category of hypergraphs to the category of graphs, which is faithful (but not full). Also, for a group $G$, a $G$-action on a hypergraph $H$ naturally induces a $G$-action on the associated bipartite graph $\mathbf{B}(H)=B_H$. 

Note that theories of hypergraph covering by using not the quotient hypergraph but the associated bipartite graph have been studied by various authors. For instance, I.~Sato \cite{sato2012edge}, D.~Li and Y.~Hou \cite{li2018hypergraph}, and D.~Li, Y.~Hou, Y.~Liao \cite{li2022zeta}.

By proving various relations between hypergraphs and associated bipartite graphs, we define sheet numbers for a free Galois covering of hypergraphs (Construction \ref{construction: sheet partition}). Consequently, we define the Artin-Ihara $L$-function of a free Galois covering $\pi:Y \to X$ of hypergraphs with the Galois group $G$ as follows (Definition \ref{definition: L-function}): for a representation $\rho$ of $G$, 
\[
L(u,\rho,Y/X):=\prod_{[C]} \det (1-\rho(F(C,Y/X))u^{\ell(C)})^{-1},
\]
where the product runs for all equivalence classes of prime cycles $C$ of $X$ (Definition \ref{definition: equivalence class of cycles}) and for each equivalence class $[C]$ we pick an arbitrary representative $C$. By appealing to the interplay between hypergraphs and associated bipartite graphs, we prove the following.

\begin{nothma}[Corollary \ref{corollary: same galois group} and Theorem \ref{theorem: linking L-functions}]
Let $Y$ be a connected hypergraph and $\pi:Y\to X$ be a free Galois covering of hypergraphs. Let $G$ be the Galois group of $\pi$. Then, one has the following:
\begin{enumerate}
    \item 
The induced map $\mathbf{B}(\pi):B_Y \to B_X$ is also a free Galois covering with the Galois group $G$. 
\item 
Let $\rho$ be a representation of $G$. Then, one has
\[
L(u,\rho,Y/X) = L(\sqrt{u},\rho,B_Y/B_X).
\] 
\end{enumerate}  
\end{nothma}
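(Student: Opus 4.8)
The plan is to prove the two parts by systematically translating structures on the hypergraph $H$ into structures on the associated bipartite graph $B_H$ via the functor $\mathbf{B}$, then invoking the already-established theory of Artin-Ihara L-functions for graphs. The key organizing principle is that $\mathbf{B}$ should transport all the relevant covering-theoretic and cycle-theoretic data faithfully, so that each ingredient in the definition of $L(u,\rho,Y/X)$ matches a corresponding ingredient in $L(\sqrt{u},\rho,B_Y/B_X)$.

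**For part (1),** I would first recall how a $G$-action on the hypergraph $Y$ induces a $G$-action on $B_Y$ (already asserted in the introduction), and check that the quotient is compatible: that is, $\mathbf{B}(Y/G) = B_Y/G = B_X$. Since $\pi: Y \to X$ is a free Galois covering with group $G$, the action of $G$ on $Y$ is free in the hypergraph sense, and I must verify this freeness transports to a free action of $G$ on the vertices of $B_Y$. The main content is to confirm that $\mathbf{B}(\pi)$ satisfies the definition of a free Galois covering of graphs with Galois group $G$ — i.e. that it is an unramified covering map of graphs and that $G$ acts as the full deck group realizing $B_X$ as the quotient. Because $\mathbf{B}$ is a faithful functor preserving the incidence data, the covering axioms for $\mathbf{B}(\pi)$ should follow more or less formally from those for $\pi$, with the bipartite structure guaranteeing no collapsing of vertex-type and edge-type vertices.

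**For part (2),** the heart of the argument is a bijection between prime cycles of $X$ and prime cycles of $B_X$ that doubles lengths and preserves Frobenius elements. Specifically, each prime cycle $C$ in the hypergraph $X$ of length $\ell(C)$ corresponds to a prime cycle $\widetilde{C}$ in $B_X$ of length $2\ell(C)$, because traversing one hyperedge-step in $H$ amounts to passing through a vertex-vertex-via-edge path of two edges in $B_H$. I would establish that this correspondence respects the equivalence classes (Definition \ref{definition: equivalence class of cycles}) and, crucially, that the Frobenius element $F(C, Y/X)$ equals $F(\widetilde{C}, B_Y/B_X)$ under the identification of Galois groups from part (1). Granting this, each Euler factor $\det(1 - \rho(F(C,Y/X))\, u^{\ell(C)})^{-1}$ in $L(u,\rho,Y/X)$ matches the factor $\det(1 - \rho(F(\widetilde{C}, B_Y/B_X))\, (\sqrt{u})^{2\ell(C)})^{-1}$ in $L(\sqrt{u},\rho,B_Y/B_X)$, since $(\sqrt{u})^{2\ell(C)} = u^{\ell(C)}$. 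Taking the product over all equivalence classes then yields the claimed identity. This strategy mirrors Storm's proof that $\zeta_H(u) = \zeta_{B_H}(\sqrt{u})$, which is precisely the special case of part (2) at the trivial representation.

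**The hard part will be** the Frobenius compatibility $F(C, Y/X) = F(\widetilde{C}, B_Y/B_X)$. The length-doubling bijection on cycles is geometrically intuitive, but one must check that the definition of the hypergraph Frobenius element (Definition \ref{definiion: Frobenius}) — presumably defined through the lifting behavior of cycles to $Y$ — is genuinely carried over by $\mathbf{B}$ to the graph Frobenius element of the lifted doubled cycle in $B_Y$. This requires that lifting a prime cycle $C$ of $X$ along $\pi$ and then applying $\mathbf{B}$ gives the same data as lifting the doubled cycle $\widetilde{C}$ of $B_X$ along $\mathbf{B}(\pi)$; equivalently, that $\mathbf{B}$ commutes with path-lifting. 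I would prove this by choosing compatible basepoints and tracking the $G$-translation that measures the failure of a cycle to close up, verifying it is the same group element whether computed in the hypergraph or in its bipartite incidence graph. Once this commutativity of lifting with $\mathbf{B}$ is in place, the rest of part (2) is a term-by-term comparison of Euler products.
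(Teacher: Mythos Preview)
Your proposal is correct and follows essentially the same approach as the paper. For part (1) the paper verifies quotient compatibility ($B_{Y/G}\cong B_Y/G$), freeness of the induced action, and that the deck groups agree (using the bipartite structure exactly as you anticipate); for part (2) the paper uses Storm's length-doubling bijection on prime cycles together with a Frobenius-compatibility lemma proved via compatible sheet partitions of $V(Y)$ and $V(B_Y)$, which is precisely the ``$\mathbf{B}$ commutes with path-lifting'' step you single out as the hard part.
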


From the above theorem and the corresponding properties of Artin-Ihara $L$-functions of graphs, we obtain the following.

\begin{nothmb}[Corollaries \ref{corollary: 1} and \ref{corollary: 2}]
Let $Y$ be a connected hypergraph and $\pi:Y\to X$ be a free Galois covering of hypergraphs. Let $G$ be the Galois group of $\pi$. 
\begin{enumerate}
    \item 
Let $\widehat{G}$ be a complete set of inequivalent irreducible representations of $G$. Then, one has the following factorization:
\[
\zeta_Y(u) = \prod_{\rho \in \widehat{G}} L(u,\rho,Y/X)^{d_\rho},
\]   
where $d_\rho$ is the dimension of $\rho$.
\item 
Let $\rho_G$ be the right regular representation of $G$. Then, one has
\begin{equation}
L(u,\rho_G,Y/X)=\zeta_Y(u).
\end{equation}
\end{enumerate}
\end{nothmb}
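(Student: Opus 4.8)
The plan is to reduce both statements to the corresponding facts for Artin-Ihara $L$-functions of graphs, applied to the induced bipartite covering, and then to transport the resulting identities back to hypergraphs using the two dictionaries already at our disposal. The first dictionary is Theorem~A: the induced map $\mathbf{B}(\pi):B_Y\to B_X$ is a free Galois covering of graphs whose Galois group is again $G$, and for every representation $\rho$ of $G$ one has the substitution identity $L(u,\rho,Y/X)=L(\sqrt{u},\rho,B_Y/B_X)$. The second is Storm's identity $\zeta_H(u)=\zeta_{B_H}(\sqrt{u})$ from \cite{storm2006zeta}, which we apply with $H=Y$. Since the Galois group is literally the same on both levels, the indexing set $\widehat{G}$ and the dimensions $d_\rho$ are unchanged under $\mathbf{B}$, so no reconciliation of the representation theory is needed.

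For part~(1), I would begin from the graph-level factorization
\[
\zeta_{B_Y}(u)=\prod_{\rho\in\widehat{G}}L(u,\rho,B_Y/B_X)^{d_\rho},
\]
which holds for the free Galois covering $B_Y/B_X$ by Theorem~\ref{theorem: L function theorems for graphs}. Substituting $u\mapsto\sqrt{u}$, rewriting the left-hand side by Storm's identity and each factor on the right by Theorem~A(2), one obtains
\[
\zeta_Y(u)=\zeta_{B_Y}(\sqrt{u})=\prod_{\rho\in\widehat{G}}L(\sqrt{u},\rho,B_Y/B_X)^{d_\rho}=\prod_{\rho\in\widehat{G}}L(u,\rho,Y/X)^{d_\rho},
\]
which is precisely the claimed factorization.

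For part~(2), the most economical route is to deduce it from part~(1). The right regular representation decomposes into irreducibles as $\rho_G\cong\bigoplus_{\rho\in\widehat{G}}\rho^{\oplus d_\rho}$, and the Euler-product definition of $L$ is multiplicative under direct sums, since for block-diagonal matrices $\det\bigl(1-(\rho_1\oplus\rho_2)(g)u^{\ell}\bigr)=\det\bigl(1-\rho_1(g)u^{\ell}\bigr)\det\bigl(1-\rho_2(g)u^{\ell}\bigr)$. Hence $L(u,\rho_G,Y/X)=\prod_{\rho\in\widehat{G}}L(u,\rho,Y/X)^{d_\rho}=\zeta_Y(u)$. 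Alternatively, one may argue in parallel to part~(1): by Theorem~A(2) we have $L(u,\rho_G,Y/X)=L(\sqrt{u},\rho_G,B_Y/B_X)$, this equals $\zeta_{B_Y}(\sqrt{u})$ by the graph-level statement that the regular representation recovers the zeta function of the cover, and the latter equals $\zeta_Y(u)$ by Storm's identity.

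Because Theorem~A has already carried out the substantive work---verifying that $\mathbf{B}(\pi)$ is a Galois covering with the same group and that the two families of $L$-functions agree under $u\mapsto\sqrt{u}$---the remaining argument is essentially bookkeeping, and I do not anticipate a genuine obstacle. The only points demanding care are the consistent tracking of the $\sqrt{u}$ substitution across every factor and, if part~(2) is proved via the regular-representation decomposition, the multiplicativity of $L$ under direct sums; the latter is either cited from the graph theory in Theorem~\ref{theorem: L function theorems for graphs} or verified on the spot from the determinant identity displayed above. Should the graph-level factorization not be stated verbatim in Theorem~\ref{theorem: L function theorems for graphs}, it can itself be recovered from the identity $L(u,\rho_G,B_Y/B_X)=\zeta_{B_Y}(u)$ together with the same multiplicativity, so no additional input is ultimately required.
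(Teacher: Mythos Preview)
Your proposal is correct and matches the paper's proofs essentially verbatim: for part~(1) the paper writes the same chain $\zeta_Y(u)=\zeta_{B_Y}(\sqrt{u})=\prod_{\rho}L(\sqrt{u},\rho,B_Y/B_X)^{d_\rho}=\prod_{\rho}L(u,\rho,Y/X)^{d_\rho}$, and for part~(2) the paper uses precisely what you call the ``alternative'' route, namely $L(u,\rho_G,Y/X)=L(\sqrt{u},\rho_G,B_Y/B_X)=\zeta_{B_Y}(\sqrt{u})=\zeta_Y(u)$. Your primary route for part~(2), via the decomposition $\rho_G\cong\bigoplus_{\rho}\rho^{\oplus d_\rho}$ and multiplicativity under direct sums (which is Proposition~\ref{proposition: L-function direct sum} in the paper), is also valid and differs only in the order of invoking the ingredients.
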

\bigskip

The paper is organized as follows. In Section \ref{section: prelim}, we review some backgrounds. In Section \ref{section: bipartite associated to hyp}, we investigate relations between hypergraph and associated bipartite graphs. In Section \ref{section: Artin-Ihara}, we introduce Artin-Ihara L-functions for hypergraphs. In Section \ref{section: properties}, we prove various properties of Artin-Ihara L-functions of hypergraphs. We also compute some examples.

\bigskip

\noindent \textbf{Acknowledgment} J.J. acknowledges the support of an AMS-Simons Research Enhancement Grant for Primarily Undergraduate Institution (PUI) Faculty during the writing of this paper. The authors extend their gratitude to Chris Eppolito and Jaehoon Kim for their valuable feedback on the initial draft. The authors also would like to thank the anonymous referees for their comments and suggestions in improving the manuscript.

\bigskip

%%%%%%%%%%%%%%%%%%%%%%%%%%%%%%%%%%%%%%%%%%%%%%%%%%%%

%%%%%%  Preliminaries %%%%%%%%%%%%%%%%%%%%%%%%%%%%%

\section{Preliminaries}\label{section: prelim}

In what follows all graphs are assumed to be undirected and finite. For a graph $X$, we let $E(X)$ be the set of edges and $V(X)$ be the set of vertices. Finally, we assume that all graphs are connected unless otherwise stated. 

\subsection{Ihara zeta functions for graphs}

In this subsection, we briefly review the definition of the Ihara zeta function of a graph.

\begin{mydef}\cite[Section 2]{terras2010zeta}\label{definition: graph prime cycles}
Let $X$ be a graph and $E=\{e_1,\dots,e_m\}$ be the set of edges of $X$. For an oriented edge $e=(u,v)$, let $i(e)=u$ and $t(e)=v$. We orient the edges of $X$ arbitrarily and define $e_j^{-1}$ to be the edge $e_j$ with the opposite orientation. 
\begin{enumerate}
\item 
A \emph{path} of $X$ is a sequence $P=(a_1, \dots, a_l)$ of oriented edges of $X$ (i.e. $a_i \in \{e_j^\pm\}_{{j=1,\dots,m}}$) such that $t(a_i)=i(a_{i+1})$ for $1\leq i \leq l-1$. The \emph{length} $\ell(P)$ is $l$. 
\item 
$C$ is a \emph{closed path} (or \emph{cycle}) if the starting vertex is the same as the terminal vertex. 
\item 
A path $P=(a_1, \dots, a_l)$  has a \emph{backtracking} (resp.~ \emph{tail}) if $a_{j+1}=a_j^{-1}$ for some $j=1,\dots,l-1$ (resp.~$a_l=a_1^{-1}$). 
\item 
A cycle $C$ is \emph{prime} if $C$ does not have a backtracking nor tail, and $C \neq D^n$ for any cycle $D$ and $n \in \mathbb{Z}_{>1}$. 	
\item 
For a prime cycle $C=(a_1, \dots, a_l)$, let $[C]$ be the equivalence class of $C$, where two closed cycles are equivalent if and only if we can get one from the other by changing the initial vertex. 
\end{enumerate}
\end{mydef}

\begin{mydef}\label{definition: Ihara zeta for graphs}
The \emph{Ihara zeta function} $\zeta_X$ for a graph $X$ is defined as follows:
\begin{equation}
\zeta_X(u):=\prod_{[C]} (1-u^{\ell(C)})^{-1}, 
\end{equation}
where the product runs through the equivalence classes of all prime cycles $C$ and $\ell(C)$ denotes the length of $C$. 
\end{mydef}

As in several other zeta functions, one may convert the product form into a summation form as follows:
\begin{equation}
\zeta_X(u)=\exp\left(\sum_{n \geq 1}\frac{a_n}{n}u^n\right), 
\end{equation}
where $a_n$ is the number of cycles of length $n$ without backtracking nor tails (see, \cite[pp 29]{terras2010zeta}). From the definition, one can observe that if a graph $G$ has a vertex $v$ of degree $1$, then the graph $G'$ obtained by removing $v$ along with the incidence edge will produce the same zeta function. Also, the following determinant formula is well-known. 

\begin{mythm}\cite{bass1992ihara} \label{theorem: three-term determinant}
Let $X$ be a graph. If $A$ (resp.~$D$) is the adjacency matrix (resp.~the degree matrix) of $X$, then one has the following
\begin{equation}
\zeta_X(u)^{-1}=(1-u^2)^{r-1}\det(I-Au+(D-I)u^2),
\end{equation} 
where $r=|E(X)|-|V(X)|+1$.
\end{mythm}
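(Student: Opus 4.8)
The plan is to prove the formula in two stages: first I would establish Hashimoto's determinant expression for $\zeta_X(u)^{-1}$ in terms of the $2m \times 2m$ \emph{edge adjacency matrix} (where $m=|E(X)|$), and then I would compress this large determinant down to the $n\times n$ three-term determinant (where $n=|V(X)|$) by Bass's block-matrix trick. Throughout, fix an orientation of the $m$ edges, so that the oriented edges are $e_1,\dots,e_m,e_1^{-1},\dots,e_m^{-1}$, and let $W$ be the $2m\times 2m$ matrix with $W_{e,f}=1$ when $t(e)=i(f)$ and $f\neq e^{-1}$, and $W_{e,f}=0$ otherwise. Thus $W$ records the ways to extend an oriented edge by one step without backtracking.

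\emph{First stage.} I would show $\zeta_X(u)^{-1}=\det(I_{2m}-uW)$. The point is that $\operatorname{tr}(W^n)$ counts exactly the closed paths $(a_1,\dots,a_n)$ of length $n$ having neither backtracking (enforced at each interior step by the condition $a_{j+1}\neq a_j^{-1}$ built into $W$) nor a tail (enforced cyclically by $a_1\neq a_n^{-1}$, i.e.\ the wraparound step of $W^n$). Writing $N_n=\operatorname{tr}(W^n)$, the summation form of the Ihara zeta function recalled after Definition \ref{definition: Ihara zeta for graphs} gives
\[
\log \zeta_X(u)=\sum_{n\geq 1}\frac{N_n}{n}u^n=\sum_{n\geq 1}\frac{\operatorname{tr}(W^n)}{n}u^n=-\operatorname{tr}\log(I_{2m}-uW)=-\log\det(I_{2m}-uW),
\]
whence $\zeta_X(u)^{-1}=\det(I_{2m}-uW)$. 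The only care needed here is the combinatorial identification $N_n=\operatorname{tr}(W^n)$, which amounts to checking that the no-tail condition is precisely the cyclic no-backtracking condition.

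\emph{Second stage.} I would factor $W$ through the vertex set. Let $\mathbf{S}$ and $\mathbf{T}$ be the $n\times 2m$ start and terminal incidence matrices, $\mathbf{S}_{v,e}=1$ iff $v=i(e)$ and $\mathbf{T}_{v,e}=1$ iff $v=t(e)$, and let $J$ be the $2m\times 2m$ involution swapping each $e$ with $e^{-1}$. A direct check gives the identities
\[
\mathbf{S}\mathbf{T}^{t}=A,\qquad \mathbf{S}\mathbf{S}^{t}=\mathbf{T}\mathbf{T}^{t}=D,\qquad \mathbf{S}J=\mathbf{T},\qquad J^2=I_{2m},\qquad W=\mathbf{T}^{t}\mathbf{S}-J.
\]
Since $J$ has eigenvalues $\pm1$ with equal multiplicities, $\det(I_{2m}+uJ)=(1-u^2)^m$ and $(I_{2m}+uJ)^{-1}=(1-u^2)^{-1}(I_{2m}-uJ)$. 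Now I would evaluate the determinant of the $(n+2m)\times(n+2m)$ block matrix
\[
M=\begin{pmatrix} I_n & u\mathbf{S} \\ \mathbf{T}^{t} & I_{2m}+uJ\end{pmatrix}
\]
in two ways using Schur complements. Expanding along the top-left block $I_n$ yields $\det M=\det(I_{2m}+uJ-u\mathbf{T}^{t}\mathbf{S})=\det(I_{2m}-uW)$, while expanding along the bottom-right block and using the identities above gives
\[
\det M=(1-u^2)^m\det\!\left(I_n-\tfrac{u}{1-u^2}\,\mathbf{S}(I_{2m}-uJ)\mathbf{T}^{t}\right)=(1-u^2)^{m-n}\det\bigl((1-u^2)I_n-uA+u^2D\bigr).
\]
Rewriting $(1-u^2)I_n-uA+u^2D=I_n-uA+u^2(D-I)$ and noting $m-n=r-1$, these two evaluations of $\det M$ combine to the desired formula.

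I expect the genuine content to lie in the second stage, namely discovering and organizing the block matrix $M$ so that its two Schur-complement evaluations produce both sides simultaneously; the bookkeeping to land on $(D-I)$ (rather than $D+I$) and on the exponent $r-1$ is delicate and is where sign and index errors are most likely. The first stage is comparatively routine once $W$ is in hand, the only subtlety being the precise matching of ``tailless closed path'' with ``cyclic non-backtracking closed path'' in the trace computation.
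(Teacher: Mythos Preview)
The paper does not prove this theorem at all: it is quoted as a known result, attributed to Bass \cite{bass1992ihara} (with later elementary treatments by Stark and Terras \cite{stark1996zeta}), and is used only as background. So there is no ``paper's own proof'' to compare against.

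That said, your argument is correct and is essentially the standard Bass proof. The first stage is the usual identification of $\zeta_X(u)^{-1}$ with $\det(I_{2m}-uW)$ via $\operatorname{tr}(W^n)=a_n$ and the exponential form of the zeta function. In the second stage your identities $\mathbf{S}\mathbf{T}^{t}=A$, $\mathbf{S}\mathbf{S}^{t}=\mathbf{T}\mathbf{T}^{t}=D$, $\mathbf{S}J=\mathbf{T}$, $W=\mathbf{T}^{t}\mathbf{S}-J$ are right, and the two Schur-complement evaluations of $M$ go through exactly as you wrote them: the bottom-right computation uses $\mathbf{S}(I_{2m}-uJ)\mathbf{T}^{t}=A-uD$, giving $(1-u^2)^{m-n}\det\bigl(I_n-uA+u^2(D-I)\bigr)$ with $m-n=r-1$. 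The only caveat is that the clean identity $\mathbf{S}\mathbf{S}^{t}=D$ assumes there are no loops (a loop at $v$ would contribute $2$ to the degree but is a single unoriented edge whose two orientations both start at $v$); in the presence of loops one must adjust the bookkeeping slightly, but the final formula survives.
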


An intriguing fact about $\zeta_X(u)$ is that the class of graphs $X$ satisfying a version of the Riemann hypothesis is precisely Ramanujan graphs. Recall that for a $(q+1)$-regular graph $X$, the Ihara zeta function $\zeta_X(q^{-s})$ satisfies the Riemann hypothesis if and only if the following hold: for $s \in \mathbb{C}$,
\[
\textrm{if } \textrm{Re}(s) \in (0,1) \textrm{ and }  \zeta_X(q^{-s})=0, \textrm{ then }  \textrm{Re}(s)=1/2.
\]

Then, one has the following characterization of Ramanujan graphs. 

\begin{mythm}\cite{stark1996zeta}\label{theorem: ramanujan}
A $(q+1)$-regular graph $X$ is Ramanujan if and only if $\zeta_X(q^{-s})$ satisfies the Riemann hypothesis.
\end{mythm}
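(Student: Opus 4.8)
The plan is to derive the statement directly from Bass's three-term determinant formula (Theorem \ref{theorem: three-term determinant}) by specializing it to the regular case and then tracking how the substitution $u = q^{-s}$ converts a spectral condition on the adjacency matrix into a condition on $\operatorname{Re}(s)$. First I would note that for a $(q+1)$-regular graph the degree matrix is $D = (q+1)I$, so $D - I = qI$ and the formula reads
\[
\zeta_X(u)^{-1} = (1-u^2)^{r-1}\det\bigl((1+qu^2)I - uA\bigr).
\]
Since $A$ is real symmetric it is orthogonally diagonalizable, and writing the eigenvalues of $A$ as $\lambda$ (with multiplicity) the determinant factors as $\prod_\lambda (qu^2 - \lambda u + 1)$. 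Because $\zeta_X(u)^{-1}$ is a polynomial, the ``zeros'' relevant to the Riemann hypothesis are really the poles of $\zeta_X$, i.e.\ the roots of this polynomial, so the whole problem reduces to locating the roots of the quadratics $qu^2 - \lambda u + 1$ and of the factor $(1-u^2)^{r-1}$.

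Next I would dispose of the trivial factors. The roots of $(1-u^2)^{r-1}$ are $u = \pm 1$; under $u = q^{-s}$ these have $\operatorname{Re}(s) = 0$, hence lie outside the open critical strip and are irrelevant. Similarly the trivial eigenvalue $\lambda = q+1$ (and $\lambda = -(q+1)$ in the bipartite case) yields the roots $u = 1, 1/q$ (resp.\ $u = -1, -1/q$), giving $\operatorname{Re}(s) \in \{0,1\}$, again on the boundary of the strip. Thus the Riemann hypothesis is governed entirely by the nontrivial eigenvalues, those with $\lambda \neq \pm(q+1)$.

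The heart of the argument is the following computation for a nontrivial eigenvalue $\lambda$. The two roots $u_\pm$ of $qu^2 - \lambda u + 1$ have product $u_+ u_- = 1/q$. If $|\lambda| \le 2\sqrt q$ the discriminant $\lambda^2 - 4q$ is non-positive, the roots are complex conjugates, and $|u_\pm|^2 = u_+ u_- = 1/q$, so $|u_\pm| = q^{-1/2}$; since $u = q^{-s}$ gives $|u| = q^{-\operatorname{Re}(s)}$, this is exactly $\operatorname{Re}(s) = 1/2$. Conversely, if $|\lambda| > 2\sqrt q$ the roots are real and distinct, so one has modulus strictly greater than $q^{-1/2}$ and the other strictly less (their product being $1/q$), forcing $\operatorname{Re}(s) \neq 1/2$; a short estimate using $2\sqrt q < |\lambda| < q+1$ shows both root moduli lie strictly between $q^{-1}$ and $1$, i.e.\ $\operatorname{Re}(s) \in (0,1)$. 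Putting these together, every pole in the critical strip lies on the line $\operatorname{Re}(s) = 1/2$ if and only if every nontrivial eigenvalue satisfies $|\lambda| \le 2\sqrt q$, which is precisely the Ramanujan condition.

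The main obstacle I expect is the careful bookkeeping of the boundary cases rather than any deep idea: one must treat the quadratic correctly when $\lambda = \pm 2\sqrt q$ (a repeated root still lying on the line), confirm that the real roots arising from $|\lambda| > 2\sqrt q$ genuinely fall inside the open strip and are distinct from $1/2$ rather than accidentally escaping it, and reconcile the phrase ``$\zeta_X(q^{-s}) = 0$'' with the fact that it is the poles of $\zeta_X$ (the zeros of the reciprocal polynomial) that are meant. None of these is conceptually difficult, but they are exactly the points where an incorrect convention or a misplaced strict inequality would invert the statement.
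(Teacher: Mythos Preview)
The paper does not actually prove this theorem; it is stated with a citation to \cite{stark1996zeta} and no proof is given, as it serves only as background. Your proposal is the standard and correct argument: specialize Bass's three-term determinant to the $(q+1)$-regular case, factor over the spectrum of $A$, and translate the condition $|u|=q^{-1/2}$ on the roots of each quadratic $qu^2-\lambda u+1$ into the Ramanujan bound $|\lambda|\le 2\sqrt{q}$ via the product-of-roots identity $u_+u_-=1/q$. Your handling of the trivial factors and boundary eigenvalues is accurate, and your caveat about ``zeros'' really meaning poles of $\zeta_X$ (zeros of its reciprocal) is the right reading of the convention used here.
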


\subsection{Artin-Ihara L-functions for graphs} \label{subsection: L-functions for graphs} 
Let $G$ be a graph. For $v \in V(G)$, we let $N(v)$ be the subgraph of $G$ induced by the set of vertices adjacent to $v$. In other words, $N(v)$ is the neighborhood of $v$.

By a \emph{morphism} $\varphi:G_1 \to G_2$ of graphs $G_1$ and $G_2$, we mean a pair of functions $(\varphi_V,\varphi_E)$, where $\varphi_V:V(G_1) \to V(G_2)$ and $\varphi_E:E(G_1) \to E(G_2)$, such that if $v \in e$ in $G_1$, then $\varphi_V(v) \in \varphi_E(e)$ in $G_2$ for $\forall~v \in V(G_1)$ and $\forall~e \in E(G_1)$. In other words, $\varphi$ preserves incidence relations.

A covering morphism $\pi=(\pi_V,\pi_E):Y \to X$ of graphs $Y$ and $X$ is a morphism of graphs such that $\pi_V$ is a surjection and $\pi$ is a local isomorphism on neighborhoods. To be precise, the following is an isomorphism for all vertices $x \in V(X)$ and $y \in \pi_V^{-1}(x)$
\[
\pi\mid_{N(y)}:N(y) \to N(x).
\]
Note that in \cite{li2018hypergraph}, Li and Hou introduced a notion of hypergraph coverings by using the same idea (isomorphisms on neighborhoods) and studied their zeta functions. 

A \emph{free Galois covering} is a covering morphism $\pi:Y \to X$ satisfying some conditions, and it has an associated \emph{Galois group}. We refer the reader to \cite{zakharov2021zeta} or \cite{terras2007zeta} for the precise definitions and examples.\footnote{In \cite{terras2007zeta}, a free Galois covering is called an \emph{unramified Galois covering}. Since our construction was motivated by the construction in \cite{zakharov2021zeta}, we follow the terminology in \cite{zakharov2021zeta}.}

Now, let $\pi:Y \to X:=Y/G$ be a free Galois covering of graphs with the Galois group $G$. To define the Artin-Ihara L-function of $\pi$, one first has to define the \emph{Frobenius element} for each prime cycle of $X$. Here are the steps to find Frobenius elements. 
\begin{enumerate}
    \item 
Fix a spanning tree $T_X$ of $X$ and consider a connected lift $T$ (in $Y$) of $T_X$. 
\item 
For each $g \in G$, we define the \emph{sheet number} $g$ to be the tree $T_g=g(T)$ so that the identity sheet becomes $T_{\textrm{id}_G}=T$. 
\item 
The trees $\{T_g\}_{g \in G}$ form a spanning forest for $Y$. In particular, for each $v \in V(Y)$ there exists a unique $g \in G$ such that $v \in V({g(T)})$. We define the sheet number of $v$ to be $g$.
\end{enumerate}

%Recall that a graph $X$ is \emph{without legs} if $X$ has no degree $1$ vertex. 
Now, one can define the Artin-Ihara L-function for $\pi:Y \to Y/G$ as follows. 

\begin{mydef}\label{definition: graph L-function}
Let $\pi:Y\to X$ be a free Galois covering of graphs. Let $G$ be the Galois group of $\pi$ and let $\rho$ be a representation of $G$. The \emph{Artin-Ihara} $L$-function of $\pi$ is defined as follows:
\[
L(u,\rho,Y/X):=\prod_{[C]} \det (1-\rho(F(C,Y/X))u^{\ell(C)})^{-1},
\]
where the product runs for all equivalence classes of prime cycles $C$ of $X$ and for each equivalence class $[C]$ we pick an arbitrary representative $C$. Note that the Frobenius element $F(C,Y/X) \in G$ associated to a prime cycle $C$ is the sheet number of the terminal vertex of the unique lift of $C$ that starts on the identity sheet $T_{\textrm{id}_G}$.
\end{mydef}

As in the case for Dedekind zeta functions and Artin L-functions in Number theory, Ihara zeta functions and Artin-Ihara L-functions are closely related. For instance, one has the following interesting results, due to Stark and Terras \cite{stark2000zeta}, analogous to Dedekind zeta functions and Artin L-functions.

\begin{mythm}\cite{terras2010zeta}\label{theorem: L function theorems for graphs}
 Let $\pi:Y\to X$ be a free Galois covering of graphs. Let $G$ be the Galois group of $\pi$.
\begin{enumerate}
    \item 
Let $\rho_1$ and $\rho_2$ be representations of $G$. Then $L(u,\rho_1\oplus \rho_2,Y/X)=L(u,\rho_1,Y/X)L(u,\rho_2,Y/X)$.
\item 
$L(u,1_G,Y/X)=\zeta_X(u)$, where $1_G$ is the trivial representation.
\item 
$L(u,\rho_G,Y/X)=\zeta_Y(u)$, where $\rho_{G}$ is the right regular representation.
\item 
$\zeta_Y(u)=\prod_{\rho \in \widehat{G}} L(u,\rho,Y/X)^{d_\rho}$,
where $\widehat{G}$ is a complete set of inequivalent irreducible representations of $G$ and $d_\rho$ is the dimension of $\rho$.\footnote{This indeed follows from a more general result (the Induction property). See \cite[Proposition 18.10]{terras2010zeta}.} 
\end{enumerate}
 
\end{mythm}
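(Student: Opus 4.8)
The plan is to treat the four assertions in the order stated, deriving each from the definition of the Artin-Ihara $L$-function (Definition \ref{definition: graph L-function}) together with elementary facts from linear algebra, the representation theory of finite groups, and the lifting behavior of paths under a free Galois covering. I would first record the standing well-definedness observation that each factor of $L(u,\rho,Y/X)$, and hence $L$ itself, is independent of the chosen representative of $[C]$: changing the base point of $C$ replaces $F(C,Y/X)$ by a conjugate, and $g \mapsto \det(1-\rho(g)u^{\ell})$ is a class function.

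For (1), if $\rho=\rho_1\oplus\rho_2$ then for every $g\in G$ the matrix $\rho(g)$ is block-diagonal with blocks $\rho_1(g)$ and $\rho_2(g)$, so $\det(1-\rho(g)u^{\ell})=\det(1-\rho_1(g)u^{\ell})\det(1-\rho_2(g)u^{\ell})$. Applying this with $g=F(C,Y/X)$ and $\ell=\ell(C)$ for each class $[C]$ and multiplying over all $[C]$ yields the factorization. For (2), the trivial representation sends every $g$ to the scalar $1$, so $\det(1-1_G(g)u^{\ell})=1-u^{\ell}$, and the product over $[C]$ is exactly $\prod_{[C]}(1-u^{\ell(C)})^{-1}=\zeta_X(u)$ by Definition \ref{definition: Ihara zeta for graphs}.

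The substance lies in (3). The key input is the lifting dictionary for $\pi:Y\to X$. First I would record the linear-algebra fact that for a permutation $\sigma$ of a finite set whose disjoint cycles have lengths $k_1,\dots,k_m$, the associated permutation matrix $P_\sigma$ satisfies $\det(1-P_\sigma t)=\prod_{i=1}^m(1-t^{k_i})$. For the right regular representation $\rho_G$, the matrix $\rho_G(g)$ is the permutation matrix of translation by $g$ on $G$, a permutation all of whose cycles have length $n:=\mathrm{ord}(g)$, so there are $|G|/n$ of them; hence, writing $g=F(C,Y/X)$ and $n_C=\mathrm{ord}(g)$, one gets $\det(1-\rho_G(g)u^{\ell(C)})=(1-u^{\,n_C\ell(C)})^{|G|/n_C}$. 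The covering side of the dictionary is that the prime cycles $\widetilde C$ of $Y$ projecting onto $C$ are exactly the closed-up lifts of $C$: starting the lift of $C$ on a given sheet, it ends on the sheet shifted by $g$, so one must traverse $C$ exactly $n_C$ times to return to the initial sheet. This produces $|G|/n_C$ distinct equivalence classes of prime cycles of $Y$ over $C$ (one per $\langle g\rangle$-orbit of starting sheets), each of length $n_C\ell(C)$, and every prime cycle of $Y$ arises in this way from a unique $[C]$ of $X$. Matching factors, the $[C]$-term $(1-u^{\,n_C\ell(C)})^{-|G|/n_C}$ of $L(u,\rho_G,Y/X)$ equals the product of the $\zeta_Y$-factors $(1-u^{\ell(\widetilde C)})^{-1}$ over the prime cycles $\widetilde C$ of $Y$ lying over $C$; multiplying over all $[C]$ gives $\zeta_Y(u)$.

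Finally, (4) is a formal consequence of (1) and (3): the right regular representation decomposes as $\rho_G\cong\bigoplus_{\rho\in\widehat G}\rho^{\oplus d_\rho}$, so by the additivity in (1), $\zeta_Y(u)=L(u,\rho_G,Y/X)=\prod_{\rho\in\widehat G}L(u,\rho,Y/X)^{d_\rho}$. I expect the only genuine obstacle to be the covering-theoretic bookkeeping in (3): carefully justifying that the lifts of a prime cycle $C$ account for all prime cycles of $Y$ over it, with the correct count $|G|/n_C$ and common length $n_C\ell(C)$, that each such lift is indeed prime (no backtracking or tail, since $\pi$ is a local isomorphism, and minimal since $n_C$ is least with $g^{n_C}=\mathrm{id}$), and that distinct classes $[C]$ contribute disjoint families of prime cycles of $Y$. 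Parts (1), (2), and (4) are then essentially formal.
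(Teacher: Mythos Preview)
The paper does not prove this theorem at all: it is stated as a background result cited from \cite{terras2010zeta}, with no proof given, so there is no ``paper's own proof'' to compare against. Your proposal is a correct sketch of the standard argument as found in Terras's book, and parts (1), (2), (4) are indeed formal; the only real content is (3), and your lifting dictionary is the right idea, though the phrase ``prime cycles $\widetilde C$ of $Y$ projecting onto $C$'' should be read carefully since $\pi(\widetilde C)=C^{n_C}$ rather than $C$ itself.

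It is worth noting that when the paper later establishes the hypergraph analogues of (1)--(4) (Propositions \ref{proposition: L-function direct sum}, \ref{proposition: trivial rep} and Corollaries \ref{corollary: 1}, \ref{corollary: 2}), it proves (1) and (2) directly from the definition exactly as you do, but for (3) and (4) it does \emph{not} argue via lifts of prime cycles as you propose; instead it transfers the question to the associated bipartite graph via Theorem \ref{theorem: linking L-functions} and then invokes the graph case (this very Theorem \ref{theorem: L function theorems for graphs}) as a black box. So your approach to (3) is more self-contained than anything the paper does, at the cost of having to verify the covering bookkeeping you flag at the end.
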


%\begin{myeg}
%Example 
%\end{myeg}

\subsection{Ihara zeta functions for hypergraphs}

A graph can be considered as a pair $(V,E)$ of a nonempty finite set $V$ (vertices), and a set of \emph{unordered} pair $E \subseteq V \times V$ (edges).\footnote{$E$ is in fact a multiset when we consider multigraphs.} \emph{Hypergraphs} are generalizations of graphs allowing edges to be \emph{any} nonempty subset. 

\begin{mydef}
By a \emph{hypergraph} $H$ we mean a pair $(V,E)$ of nonempty finite sets $V$ (\emph{hypervertices}) and $E\subseteq 2^V$ (\emph{hyperedges}) such that $\bigcup\limits_{e \in E} e = V$.\footnote{We allow hyperedges to repeat, so strictly speaking $E$ is a multiset.} We say that a hypervertex $v$ is \emph{incident} to a hyperedge $e$ if $v \in e$. In the following, we will simply say vertices and edges instead of hypervertices and hyperedges. 
\end{mydef}

We recall the definition of bipartite graph, which will be used as an important tool to study hypergraphs.

\begin{mydef}
 A \emph{bipartite} graph is a graph $G$ in which $V(G)=V_1(G)\sqcup V_2(G)$ for some nonempty disjoint subsets $V_1(G),V_2(G) \subseteq V(G)$, and no two vertices within the same set $V_i(G)$ for $i=1,2$ are adjacent.
\end{mydef}

For a given hypergraph $H$, one can associate a bipartite graph $B_H$ which encodes incidence relations on $H$ as follows:
\begin{enumerate}
    \item 
$V(B_H)=V(H) \sqcup E(H)$,
\item 
As vertices of $B_H$, $v \in V(H)$ and $e \in E(H)$ are adjacent in $B_H$ if $v$ is incident to $e$ in $H$. There will be no edges between the vertices in $V(H)$ and between vertices in $E(H)$. 
\end{enumerate}

\begin{myeg}\label{example: hypergraph bipartite example}
Let $H$ be a hypergraph with the following vertices and edges:
\[
V(H)=\{v_1,\dots,v_6\}, \quad E(H)=\{e_1, e_2, e_3, e_4  \},
\]
where 
\[
e_1=\{v_1,v_2,v_3\},\quad e_2=\{v_2,v_3\},\quad  e_3=\{v_3,v_5,v_6\},\quad  e_4=\{v_4\}.
\]
Pictorially, $H$ is the following:
\[
\begin{tikzpicture}
    \node (v1) at (0,2) {};
    \node (v2) at (1.5,3) {};
    \node (v3) at (4,2.5) {};
    \node (v4) at (0,0) {};
    \node (v5) at (2,0.5) {};
    \node (v6) at (3.5,0) {};

    \begin{scope}[fill opacity=0.7]
    \filldraw[fill=teal!30] ($(v1)+(-0.5,0)$) 
        to[out=90,in=180] ($(v2) + (0,0.7)$) 
        to[out=0,in=90] ($(v3) + (1,0)$)
        to[out=270,in=0] ($(v2) + (1.5,-1)$)
        to[out=180,in=270] ($(v1)+(-0.5,0)$);
    \filldraw[fill=lime!30] ($(v4)+(-0.5,0.2)$)
        to[out=90,in=180] ($(v4)+(0,1)$)
        to[out=0,in=90] ($(v4)+(0.5,0.5)$)
        to[out=270,in=0] ($(v4)+(0,-0.6)$)
        to[out=180,in=270] ($(v4)+(-0.5,0.2)$);
    \filldraw[fill=green!30] ($(v5)+(-0.5,0)$)
        to[out=90,in=225] ($(v3)+(-0.5,-1)$)
        to[out=45,in=270] ($(v3)+(-0.7,0)$)
        to[out=90,in=180] ($(v3)+(0,0.5)$)
        to[out=0,in=90] ($(v3)+(0.7,0)$)
        to[out=270,in=90] ($(v3)+(-0.3,-1.8)$)
        to[out=270,in=90] ($(v6)+(0.5,-0.3)$)
        to[out=270,in=270] ($(v5)+(-0.5,0)$);
    \filldraw[fill=red!30] ($(v2)+(-0.5,-0.2)$) 
        to[out=90,in=180] ($(v2) + (0.2,0.4)$) 
        to[out=0,in=180] ($(v3) + (0,0.3)$)
        to[out=0,in=90] ($(v3) + (0.3,-0.1)$)
        to[out=270,in=0] ($(v3) + (0,-0.3)$)
        to[out=180,in=0] ($(v3) + (-1.3,0)$)
        to[out=180,in=270] ($(v2)+(-0.5,-0.2)$);
    \end{scope}
    \foreach \v in {1,2,...,6} {
        \fill (v\v) circle (0.1);
    }

    \fill (v1) circle (0.1) node [right] {$v_1$};
    \fill (v2) circle (0.1) node [below left] {$v_2$};
    \fill (v3) circle (0.1) node [left] {$v_3$};
    \fill (v4) circle (0.1) node [below] {$v_4$};
    \fill (v5) circle (0.1) node [below right] {$v_5$};
    \fill (v6) circle (0.1) node [below left] {$v_6$};

    \node at (0.2,2.8) {$\textcolor{red}{e_1}$};
    \node at (2.3,3) {$\textcolor{red}{e_2}$};
    \node at (3,0.8) {$\textcolor{red}{e_3}$};
    \node at (0.1,0.7) {$\textcolor{red}{e_4}$};
\end{tikzpicture}
\]
Then the associated bipartite graph $B_H$ is the following:
\[
B_H=\left(  
\begin{tikzcd}[every arrow/.append style={-}, row sep=0.2cm, column sep=1cm]
v_1 \arrow[rr] & & e_1 \\
v_2 \arrow[rr] \arrow[rru]& & e_2 \\
v_3  \arrow[rr] \arrow[rru] \arrow[rruu]& &e_3 \\
v_4 \arrow[rr] & &e_4 \\
v_5 \arrow[rruu] & & \\
v_6 \arrow[rruuu]& &
\end{tikzcd} 
\right)
\]
\end{myeg}

Here are some definitions and notations that we will use in the sequel. 

\begin{mydef}\cite{storm2006zeta}
Let $H$ be a hypergraph. 
\begin{enumerate}
    \item 
Let $u,v \in V(H)$. A \emph{path} of length $n$ from $u$ to $v$ is defined to be a sequence
\[
P=(u=v_0,e_1,v_1,e_2,v_2,\dots,e_n,v_n=v)
\]
such that $v_0 \in e_1$, $v_n \in e_n$, and $v_i \in e_{i+1} \cap e_i$ for all $i \in \{1,\dots,n-1\}$.\footnote{One may define this as a walk, but we follow the terminology in \cite{storm2006zeta}.}
\item 
We say that a path $P$ has \emph{edge-backtracking} if there is a subsequence of $P$ of the form $(e, v, e)$ for some $e \in E(H)$ and $v \in V(H)$.
\item 
If $n>1$ and $u=v$, then a path is called a \emph{cycle} or a \emph{closed path} of length $n$.
\item 
For a cycle $C$, by $C^\ell$ for $\ell \in \mathbb{N}$, we mean a cycle formed by going around the path $\ell$ times.
\item 
A cycle $C$ is said to be \emph{tail-less} if $C^2$ does not have edge-backtracking.
\item 
A cycle $C$ is \emph{prime} if $C$ does not have a backtracking nor a tail, and $C \neq D^n$ for any cycle $D$ and $n \in \mathbb{Z}_{> 1}$.
\end{enumerate}
\end{mydef}

\begin{mydef}
Let $H$ be a hypergraph. 
\begin{enumerate}
 \item 
$H$ is said to be without degree-1 vertices if any vertex $v \in V(H)$ is contained in at least two different edges.
\item 
$H$ is \emph{connected} if for all $u,v \in V(H)$ there is a path from $u$ to $v$. 
\end{enumerate}
\end{mydef}

\begin{mydef}\label{definition: equivalence class of cycles}
Let $H$ be a hypergraph and $C$ be a prime cycle. We let $C \sim Q$ if and only if they have the same underlying cycle, i.e.,
\[
\textrm{if} \quad C=(v_1,e_1,v_2,\dots,v_n,e_n,v_1), \quad \textrm{then} \quad Q=(v_i,e_i,\dots,v_n,e_n,v_1,e_1,\dots,e_{i-1},v_i)
\]
for some $i \in \{1,2,\dots,n\}$. We let $[C]$ be the equivalence class of $C$. 
\end{mydef}

\begin{mydef}\cite[Definition 4]{storm2006zeta} \label{definition: Storm zeta}
Let $H$ be a hypergraph. The \emph{Ihara zeta function} of $H$ is defined as follows
\begin{equation}
\zeta_H(u)=\prod_{[C]} (1-u^{\ell(C)})^{-1}, 
\end{equation}
where the product runs through all equivalence classes of prime cycles $C$ and $\ell(C)$ denotes the length of $C$. 
\end{mydef}

Storm showed the Ihara zeta function of $B_H$ is closely related to the Ihara zeta function of $H$ as follows. 

\begin{mythm}\cite[Theorem 10]{storm2006zeta} \label{theorem: zeta for hypergraoh via bipartite}
Let $H$ be a connected hypergraph, and $B_H$ be the associated bipartite graph. Then, one has
\begin{equation}
\zeta_H(u)=\zeta_{B_H}(\sqrt{u}),
\end{equation}
where $\zeta_{B_H}(u)$ is the Ihara zeta function of $B_H$.
\end{mythm}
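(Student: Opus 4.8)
The plan is to exhibit an explicit length-doubling bijection between equivalence classes of prime cycles of $H$ and equivalence classes of prime cycles of $B_H$, and then read off the identity directly from the Euler-product definitions of the two zeta functions.

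First I would define the natural map $\Phi$ sending a hypergraph path $P=(v_0,e_1,v_1,\dots,e_n,v_n)$ to the walk of $B_H$ that traverses $v_0\to e_1\to v_1\to\cdots\to e_n\to v_n$, where each symbol is now a vertex of $B_H$ and consecutive symbols are joined by the incidence edge of $B_H$. Since the conditions $v_{i-1},v_i\in e_i$ are exactly the adjacency conditions in $B_H$, this is a well-defined walk, and a hypergraph path of length $n$ produces a walk of length $2n$. Conversely, because $B_H$ is bipartite with parts $V(H)$ and $E(H)$, every walk in $B_H$ alternates between the two parts; a walk beginning at a vertex of $V(H)$ therefore has even length and is the image under $\Phi$ of a unique hypergraph path. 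This sets up the backbone correspondence, and the real work lies in checking that it matches the combinatorial conditions defining prime cycles.

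The key step is the dictionary between being \emph{reduced} (no backtracking) in $B_H$ and Storm's conditions in $H$. A walk in $B_H$ has a backtracking at an $E(H)$-vertex $e_i$ precisely when the vertices on either side coincide, i.e.\ $v_{i-1}=v_i$, and it has a backtracking at a $V(H)$-vertex $v_i$ precisely when $e_i=e_{i+1}$, which is exactly an edge-backtracking $(e,v,e)$ of $H$. Thus a hypergraph cycle is backtrackless (and, applying the same analysis to $C^2$, tailless) if and only if its image $\Phi(C)$ is backtrackless and tailless in $B_H$. One then checks that $\Phi$ is compatible with taking powers, so that $C=D^k$ in $H$ if and only if $\Phi(C)=\Phi(D)^k$ in $B_H$; hence $C$ is prime in $H$ if and only if $\Phi(C)$ is prime in $B_H$. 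Finally, cyclic rotation of a hypergraph cycle corresponds to rotation by two steps of the $B_H$-cycle, while a one-step rotation merely moves the base point from a $V(H)$-vertex to an adjacent $E(H)$-vertex within the same class; consequently every equivalence class of prime cycles of $B_H$ contains a representative based at a vertex of $V(H)$, and $\Phi$ descends to a bijection $[C]\mapsto[\Phi(C)]$ between prime-cycle classes of $H$ and of $B_H$ with $\ell(\Phi(C))=2\,\ell(C)$.

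With the bijection in hand the conclusion is immediate: substituting $u\mapsto\sqrt{u}$ into the product defining $\zeta_{B_H}$ and reindexing along $\Phi$ gives
\[
\zeta_{B_H}(\sqrt{u})=\prod_{[C']}\bigl(1-(\sqrt{u})^{\ell(C')}\bigr)^{-1}=\prod_{[C]}\bigl(1-u^{\ell(C)}\bigr)^{-1}=\zeta_H(u).
\]
I expect the main obstacle to be the backtracking dictionary of the third paragraph, and in particular keeping the two sources of backtracking in $B_H$ straight: edge-backtracking of $H$ accounts only for the backtrackings occurring at $V(H)$-vertices of $B_H$, so one must verify that the backtrackings of $B_H$ occurring at $E(H)$-vertices correspond to genuinely excluded configurations of $H$ (consecutive repeated vertices), ensuring that no spurious prime cycle appears on either side. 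An alternative route, which avoids cycle bookkeeping entirely, would be to compute $\zeta_{B_H}(u)^{-1}$ from the three-term determinant formula (Theorem \ref{theorem: three-term determinant}) using the block adjacency matrix $\left(\begin{smallmatrix} 0 & M \\ M^{T} & 0 \end{smallmatrix}\right)$ of $B_H$, where $M$ is the incidence matrix of $H$, and to match the resulting expression, which is a polynomial in $u^2$ by the bipartite parity, against a determinant formula for $\zeta_H$.
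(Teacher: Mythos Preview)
The paper does not supply its own proof of this statement; it is quoted from Storm without argument. Your approach---constructing the length-doubling bijection $[C]\mapsto[\Phi(C)]$ between prime-cycle classes of $H$ and of $B_H$---is precisely Storm's Proposition~9, which the present paper later invokes verbatim in the proof of Theorem~\ref{theorem: linking L-functions}. So your proposal is correct and coincides with the source argument.

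One remark on the subtlety you flag. You are right that the backtracking dictionary has two halves: backtrackings of $B_H$ at $V(H)$-vertices correspond to edge-backtracking $(e,v,e)$ in $H$, while backtrackings at $E(H)$-vertices correspond to consecutive repeated vertices $(v,e,v)$ in $H$. The paper's Definition~2.9 only names the first of these (``edge-backtracking''), so under a strictly literal reading of the definitions reproduced here your worry is justified: a hypergraph ``cycle'' such as $(v_0,e_1,v_0,e_2,v_0)$ with $e_1\neq e_2$ would have no edge-backtracking yet map to a non-reduced walk in $B_H$. In Storm's original setup such configurations are excluded, and indeed they must be for the theorem to hold; once that is granted, your dictionary is complete and the bijection goes through exactly as you describe. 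Your alternative determinant route via the block adjacency matrix of $B_H$ is also one Storm pursues, and gives the same identity without the cycle bookkeeping.
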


\cite[Examples 15, 18]{storm2006zeta} shows that there exists a hypergraph whose zeta function is different from the Ihara zeta function of any graph. In particular, the zeta function of a hypergraph is a ``nontrivial'' generalization of the zeta function of a graph. Storm also showed an analog of Theorem \ref{theorem: ramanujan} for hypergraphs.

\section{Bipartite graphs associated to hypergraphs}\label{section: bipartite associated to hyp}

In this section, we study relations between hypergraphs and their associated bipartite graphs.

\begin{mydef}
Let $H_1$ and $H_2$ be hypergraphs. By a \emph{morphism} $\varphi:H_1 \to H_2$ we mean a pair of functions $(\varphi_V,\varphi_E)$, where $\varphi_V:V(H_1) \to V(H_2)$ and $\varphi_E:E(H_1) \to E(H_2)$, such that if $v \in e$ in $H_1$, then $\varphi_V(v) \in \varphi_E(e)$ in $H_2$ for $\forall~v \in V(H_1)$ and $\forall~e \in E(H_1)$. 
\end{mydef}

\begin{mydef}\label{definition: induced map}
Let $\varphi:H_1 \to H_2$ be a morphism of hypergraphs. Then, $\varphi$ induces a pair $\varphi_*=(f_V,f_E)$ of functions $f_V:V(B_{H_1}) \to V(B_{H_2})$ and $f_E:E(B_{H_1}) \to E(B_{H_2})$ such that:
\begin{equation}\label{eq: def of fv}
f_V(a) = \begin{cases}
    \varphi_V(a) & \textrm{ if $a \in V(H_1)$,}\\
    \varphi_E(a) & \textrm{ if $a \in E(H_1)$.}
\end{cases}
\end{equation}
For $f_E$, if there exists an edge $\alpha \in E(B_{H_1})$ between $a \in V(H_1)$ and $e \in E(H_1)$ then $\varphi_V(a) \in \varphi_E(e)$. Hence there exists a unique edge $\tilde{\alpha}$ in $B_{H_2}$ whose vertices are $\varphi_V(a)$ and $\varphi_E(e)$. We define $f_E(\alpha)=\tilde{\alpha}$. 
\end{mydef}

\begin{lem}\label{lemma: iso to iso}
With the same notation as in Definition \ref{definition: induced map}, if $\varphi$ is an isomorphism, then $\varphi_*=(f_V,f_E)$ is an isomorphism. 
\end{lem}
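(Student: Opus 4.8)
The plan is to produce an explicit two-sided inverse for $\varphi_*$, namely the pair induced by $\varphi^{-1}$. Since $\varphi$ is an isomorphism of hypergraphs, its components $\varphi_V$ and $\varphi_E$ are bijections and the inverse $\varphi^{-1}=(\varphi_V^{-1},\varphi_E^{-1})$ is again a morphism of hypergraphs. Consequently $\varphi^{-1}$ induces, exactly as in Definition \ref{definition: induced map}, a pair $(\varphi^{-1})_*=(g_V,g_E)$ of maps $B_{H_2}\to B_{H_1}$. My whole strategy is to check that $(\varphi^{-1})_*$ is inverse to $\varphi_*=(f_V,f_E)$ separately on vertices and on edges; this amounts to verifying that the assignment $\psi\mapsto\psi_*$ respects composition and identities, so that it automatically carries isomorphisms to isomorphisms.

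First I would handle the vertex maps. By Equation \eqref{eq: def of fv}, under the decomposition $V(B_{H_1})=V(H_1)\sqcup E(H_1)$ the map $f_V$ is simply the disjoint union $\varphi_V\sqcup\varphi_E$, and likewise $g_V=\varphi_V^{-1}\sqcup\varphi_E^{-1}$. Because $\varphi_V$ and $\varphi_E$ are bijections, the case analysis in \eqref{eq: def of fv} shows immediately that $f_V$ is a bijection with inverse $g_V$. Next I would treat the edge maps, using the observation that each $B_H$ is a \emph{simple} graph, so an edge is uniquely determined by its unordered pair of endpoints (the incidence relation $v\in e$ being a plain relation, distinct multiset-copies of an edge giving distinct vertices of $B_H$). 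Given an edge $\alpha$ of $B_{H_1}$ joining $a\in V(H_1)$ to $e\in E(H_1)$ with $a\in e$, Definition \ref{definition: induced map} sets $f_E(\alpha)$ to be the unique edge of $B_{H_2}$ joining $\varphi_V(a)$ to $\varphi_E(e)$; applying $g_E$ then returns the unique edge joining $\varphi_V^{-1}(\varphi_V(a))=a$ to $\varphi_E^{-1}(\varphi_E(e))=e$, which is $\alpha$. Hence $g_E\circ f_E=\mathrm{id}$, and the symmetric computation gives $f_E\circ g_E=\mathrm{id}$. Since Definition \ref{definition: induced map} already guarantees that both $\varphi_*$ and $(\varphi^{-1})_*$ are graph morphisms (the endpoints of $f_E(\alpha)$ are $f_V$ of the endpoints of $\alpha$), this exhibits $(\varphi^{-1})_*$ as a morphism two-sided inverse to $\varphi_*$, so $\varphi_*$ is an isomorphism.

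The only point requiring genuine care is the well-definedness of the edge maps and of their inverses, which rests entirely on $B_{H_1}$ and $B_{H_2}$ being simple graphs whose edges are pinned down by their endpoint pairs; once this is recorded, every verification collapses to the bijectivity of $\varphi_V$ and $\varphi_E$. I therefore expect no substantive obstacle beyond this bookkeeping, and the argument is in essence the functoriality of $\varphi\mapsto\varphi_*$ specialized to isomorphisms.
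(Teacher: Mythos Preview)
Your proof is correct. The approach differs mildly from the paper's: the paper verifies directly that $f_V$ and $f_E$ are bijections (injectivity and surjectivity of $f_E$ checked by hand), whereas you build the inverse morphism $(\varphi^{-1})_*$ and show it is a two-sided inverse to $\varphi_*$. Your route is essentially the functoriality statement $(\psi\circ\varphi)_*=\psi_*\circ\varphi_*$ together with $(\mathrm{id})_*=\mathrm{id}$, specialized to $\psi=\varphi^{-1}$; the paper in fact proves this functoriality immediately afterward in Proposition~\ref{proposition: faithful functor}, so your argument anticipates that step and gets the lemma as a formal consequence. Both arguments hinge on the same observation you flag explicitly: $B_H$ is simple, so an edge is determined by its endpoint pair, which is what makes $f_E$ and $g_E$ mutually inverse. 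One small point worth noting: you use that $\varphi^{-1}$ is itself a hypergraph morphism, i.e., that ``isomorphism'' is meant categorically (a morphism with inverse morphism), not merely ``$\varphi_V,\varphi_E$ bijective''; the paper's proof tacitly uses the same fact in its surjectivity step for $f_E$, so you are on equal footing there.
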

\begin{proof}
Suppose that $\varphi$ is an isomorphism, i.e., $\varphi_V$ and $\varphi_E$ are bijections. It is clear that $f_V$ is a bijection. Now, suppose that $f_E(\alpha)=f_E(\beta)$, where $\alpha = (a,e) \in V(H_1)\times E(H_1)$ and $\beta=(b,h) \in V(H_2)\times E(H_2)$. From the definition of $f_E$, we have that $\varphi_V(a)=\varphi_V(b)$ and $\varphi_E(e)=\varphi_E(h)$. Since $\varphi_V$ and $\varphi_E$ are bijections, it follows that $(a,e)=(b,h)$, showing that $f_E$ is an injection. On the other hand, suppose that $\gamma \in E(B_{H_2})$. It means that there exist $c \in V(H_2)$ and $k \in E(H_2)$ such that $c \in k$ and $\gamma=(c,k)$. Since $\varphi$ is an isomorphism, there exists $x \in V(H_1)$ and $\ell \in E(H_1)$ such that $\varphi_V(x)=c$ and $\varphi_E(\ell)=k$. It follows that there exists $\delta \in E(B_{H_1})$ such that $f_E(\delta)=\gamma$, showing that $f_E$ is a surjection as well. 
\end{proof}

\begin{pro}\label{proposition: faithful functor}
Let $\mathcal{H}$ be the category of hypergraphs and $\mathcal{G}$ be the category of graphs. The functor $\mathbf{B}:\mathcal{H} \to \mathcal{G}$, sending any hypergraph $H$ to $B_H$ and a morphism $\varphi:H_1 \to H_2$ to $\varphi_*:B_{H_1} \to B_{H_2}$, is faithful. 
\end{pro}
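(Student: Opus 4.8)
The plan is to establish two things: first, that $\mathbf{B}$ is genuinely a functor (it sends hypergraph morphisms to graph morphisms and respects identities and composition), and second, that it is faithful. Since Definition \ref{definition: induced map} already specifies the assignment $\varphi \mapsto \varphi_* = (f_V, f_E)$ on objects and morphisms, the real work is to verify the functoriality axioms and then to observe that faithfulness follows almost immediately from the shape of $f_V$.

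First I would confirm that $\varphi_* = (f_V, f_E)$ is a well-defined morphism of graphs. The map $f_V \colon V(B_{H_1}) \to V(B_{H_2})$ is well defined by \eqref{eq: def of fv} because $V(B_{H_1}) = V(H_1) \sqcup E(H_1)$ is a disjoint union, so every vertex of $B_{H_1}$ falls into exactly one of the two cases. For $f_E$, the key point is that in a bipartite graph $B_{H_2}$ there is at most one edge joining a given vertex $\varphi_V(a) \in V(H_2)$ to a given vertex $\varphi_E(e) \in E(H_2)$ (incidence $v \in e$ being a yes/no relation, even allowing repeated edges), so $\tilde{\alpha}$ is uniquely determined; combined with $\varphi_V(a) \in \varphi_E(e)$, guaranteed by $\varphi$ being a morphism, this shows $\tilde{\alpha}$ exists. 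Incidence preservation is then immediate, since by construction the endpoints of $f_E(\alpha) = \tilde{\alpha}$ are exactly $f_V(a)$ and $f_V(e)$.

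Next I would check the two functoriality axioms. For identities, if $\varphi = \mathrm{id}_H$ then $\varphi_V$ and $\varphi_E$ are identity maps, whence $f_V$ is the identity on $V(H) \sqcup E(H) = V(B_H)$ and $f_E$ fixes every edge, giving $\mathbf{B}(\mathrm{id}_H) = \mathrm{id}_{B_H}$. For composition, given $\varphi \colon H_1 \to H_2$ and $\psi \colon H_2 \to H_3$, I would compare $\mathbf{B}(\psi \circ \varphi)$ with $\mathbf{B}(\psi) \circ \mathbf{B}(\varphi)$ on vertices and edges separately: on the summands $V(H_1)$ and $E(H_1)$ both vertex maps reduce to $\psi_V \circ \varphi_V$ and $\psi_E \circ \varphi_E$ respectively by \eqref{eq: def of fv}, and on an edge $\alpha = (a,e)$ both edge maps produce the unique edge of $B_{H_3}$ with endpoints $\psi_V(\varphi_V(a))$ and $\psi_E(\varphi_E(e))$; the uniqueness established above forces the two to agree.

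Finally, faithfulness. Suppose $\varphi, \psi \colon H_1 \to H_2$ satisfy $\mathbf{B}(\varphi) = \mathbf{B}(\psi)$, so in particular $f_V^{\varphi} = f_V^{\psi}$. Restricting this equality to the summand $V(H_1) \subseteq V(B_{H_1})$ gives $\varphi_V = \psi_V$ by \eqref{eq: def of fv}, and restricting to the summand $E(H_1)$ gives $\varphi_E = \psi_E$; hence $\varphi = \psi$. The point is that $f_V$ already records both components $\varphi_V$ and $\varphi_E$ as its restrictions to the two halves of the disjoint union, so no information is lost. I expect the only genuinely delicate step to be the well-definedness of $f_E$ (ensuring $\tilde{\alpha}$ is unique, which relies on $B_H$ having at most one edge between any incident vertex–edge pair); once that is in hand, both functoriality and faithfulness are routine.
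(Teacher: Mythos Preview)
Your proposal is correct and follows essentially the same approach as the paper: verify functoriality by checking composition on vertices and edges separately (using the uniqueness of $\tilde{\alpha}$), then deduce faithfulness from the fact that $f_V$ restricts to $\varphi_V$ on $V(H_1)$ and to $\varphi_E$ on $E(H_1)$. The paper argues faithfulness in contrapositive form (distinct $\varphi_1,\varphi_2$ differ on some $u\in V(B_{H_1})$) while you argue it directly, and the paper omits the verification of identities and of well-definedness of $\varphi_*$ that you include, but these are cosmetic differences.
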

\begin{proof}
We first prove that $\mathbf{B}$ is indeed a functor. Let $\varphi:H_1\to H_2$ and $\psi:H_2 \to H_3$ be morphisms of hypergraphs. We claim that 
\[
(\psi\circ\varphi)_* = \psi_* \circ \varphi_*.
\]
Let $\Phi=(\psi\circ\varphi)$ and $\Phi_*=(h_V,h_E)$. We further let $\varphi_*=(f_V,f_E)$ and $\psi_*=(g_V,g_E)$. Firstly, it is clear from \eqref{eq: def of fv} that $h_V=g_V\circ f_V$. To show that $h_E=g_E \circ f_E$, suppose that $\alpha \in E(B_{H_1})$, i.e., $\alpha=(a,e) \in V(H_1) \times E(H_1)$ such that $a \in e$. Now, we have
\[
\Phi_V(a)=\psi_V(\varphi_V(a)) \in \Phi_E(e) = \psi_E(\varphi_E(e)),
\]
showing that $h_E(\alpha)=g_E(f_E(\alpha))$. Hence $\mathbf{B}$ is a functor.

Now, let $\varphi_1$ and $\varphi_2$ be distinct morphisms from $H_1$ to $H_2$. Then, we have either $(\varphi_1)_V\neq (\varphi_2)_V$ or $(\varphi_1)_E\neq (\varphi_2)_E$. This means that there exists $v\in V(H_1)$ so that $(\varphi_1)_V(v)\neq (\varphi_2)_V(v)$ or there exists $e\in E(H_1)$ so that $(\varphi_1)_E(e)\neq (\varphi_2)_E(e)$. Either way, there exists $u\in V(B_{H_1})$ so that $((\varphi_1)_*)_V(u)\neq ((\varphi_2)_*)_V(u)$. Hence, different maps from $H_1$ to  $H_2$ induce different maps from $B_{H_1}$ to $B_{H_2}$, showing that the functor $\mathbf{B}$ is faithful.
\end{proof}

The following simple example shows that $\mathbf{B}$ does not have to be full, even when we restrict ourselves to graphs in $\mathcal{H}$. 

\begin{myeg}
Consider the following hypergraphs $Y=(V_Y,E_Y)$ and $X=(V_X,E_X)$:
\[
V_Y=\{v_1,v_2\}, \quad E_Y=\{e_1\}, \quad V_X=\{u_1\}, \quad E_X=\{f_1\},
\]
where
\[
e_1=\{v_1,v_2\}, \quad f_1=\{u_1\}.
\]
Then, we have the following:
\[
B_Y=\left(\begin{tikzcd}[every arrow/.append style={-}, row sep=0.2cm, column sep=1cm]
    v_1 \arrow[r] & e_1 \\
    v_2  \arrow[ru]
\end{tikzcd}\right), \quad B_X=\left( \begin{tikzcd}[every arrow/.append style={-}, row sep=0.2cm, column sep=1cm]
    v_1 \arrow[r] & e_1 
\end{tikzcd} \right)
\]
Once can easily see that there are two graph morphisms from $B_Y$ to $B_X$, however there exists a unique morphism from $Y$ to $X$. 
\end{myeg}

Now, we define group actions on hypergraphs and quotient hypergraphs.

\begin{mydef}\label{definition: free action}
Let $G$ be a group and $H$ be a hypergraph.
\begin{enumerate}
    \item 
By an \emph{action} of $G$ on $H$, we mean a group homomorphism $f:G \to \Aut(H)$. 
\item 
An action $f:G \to \Aut(H)$ is said to be \emph{free} if the following two conditions hold:
\begin{enumerate}
    \item 
$G$ acts freely on $V(H)$ and $E(H)$.
\item 
For any $v \in V(H)$, if $v \in e$, then $g(v) \not \in e$ for all $g\neq \textrm{id}_G$. 
\end{enumerate}
\end{enumerate}
\end{mydef}

\begin{rmk}
Note that the notion of free action in Definition \ref{definition: free action} is identical to the notion of free action in \cite{zakharov2021zeta} when $H$ is a graph. 
\end{rmk}

Let $G$ be a group and $H$ be a hypergraph. A group action $f:G \to \Aut(H)$ induces a group action $f_*:G \to \Aut(B_H)$. In fact, it follows from Lemma \ref{lemma: iso to iso} that we have an injective homomorphism of groups $B_*:\Aut(H) \to \Aut(B_H)$. By composing this with $f$, we obtain a group action $B_*\circ f:G \to \Aut(B_H)$. The following shows that the same holds for free group actions. 

\begin{pro}\label{proposition: free induced action}
Let $G$ be a group acting freely on a hypergraph $H$. Then, the induced $G$-action on $B_H$ is also free. 
\end{pro}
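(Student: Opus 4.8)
The plan is to verify directly the two defining conditions of a free action (Definition \ref{definition: free action}) for the induced homomorphism $f_*:G \to \Aut(B_H)$, treating $B_H$ as a graph, i.e.\ as a hypergraph all of whose edges have exactly two vertices. Throughout I would use the explicit description of the induced action introduced before this proposition: writing $f(g)=((f(g))_V,(f(g))_E)$, the element $g$ acts on $V(B_H)=V(H)\sqcup E(H)$ by $(f(g))_V$ on the $V(H)$-part and by $(f(g))_E$ on the $E(H)$-part, and on an edge $\alpha=(v,e)$ of $B_H$ (with $v \in e$) by $g\cdot\alpha=((f(g))_V(v),(f(g))_E(e))$; that this is a well-defined $\Aut(B_H)$-action is exactly the content of Lemma \ref{lemma: iso to iso} and Proposition \ref{proposition: faithful functor}. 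The one structural fact I would emphasize from the start is that this action preserves the bipartition $V(B_H)=V(H)\sqcup E(H)$, since $(f(g))_V$ sends vertices to vertices and $(f(g))_E$ sends edges to edges.

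First I would check that $G$ acts freely on $V(B_H)$ and on $E(B_H)$ (condition (2a) of Definition \ref{definition: free action}). A fixed point $w\in V(B_H)$ of $g$ is either a vertex of $H$ fixed by $(f(g))_V$ or an edge of $H$ fixed by $(f(g))_E$; in either case the hypothesis that $G$ acts freely on $V(H)$ and on $E(H)$ forces $g=\mathrm{id}_G$. For an edge $\alpha=(v,e)$ of $B_H$ fixed by $g$, the equality $g\cdot\alpha=\alpha$ already yields $(f(g))_V(v)=v$, so again $g=\mathrm{id}_G$. Next I would verify condition (2b): for any $w\in V(B_H)$ incident to an edge $\alpha$ of $B_H$ and any $g\neq\mathrm{id}_G$, the image $g\cdot w$ is not incident to $\alpha$. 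Here I would exploit the $G$-stability of the bipartition. The edge $\alpha=(v,e)$ has exactly two endpoints, namely $v\in V(H)$ and $e\in E(H)$, lying on different sides of the bipartition. If $w$ is incident to $\alpha$, then $g\cdot w$ lies on the same side as $w$, so the only endpoint of $\alpha$ it could equal is $w$ itself; and $g\cdot w=w$ would force $g=\mathrm{id}_G$ by freeness. Hence $g\cdot w\notin\alpha$ for every $g\neq\mathrm{id}_G$.

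The point worth highlighting, and the reason there is no genuine obstacle, is that the bipartite structure does most of the work: because the induced action preserves the two sides of $B_H$, it can neither reverse an edge of $B_H$ nor carry an endpoint of $\alpha$ onto the opposite-type endpoint, so condition (2b) collapses to the freeness already assumed on $V(H)$ and $E(H)$. In particular, condition (2b) for $H$ itself is not needed for this statement; the only inputs are freeness on vertices and on edges together with the $G$-equivariance of the association $H\mapsto B_H$. The mild bookkeeping I would be most careful to make explicit is that $g\cdot w$ and $w$ always lie on the same side of the bipartition, since that is precisely what excludes the degenerate possibility of one endpoint of $\alpha$ being sent to the other.
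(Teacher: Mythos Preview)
Your proof is correct and follows essentially the same approach as the paper: freeness on $V(B_H)$ and $E(B_H)$ is reduced to freeness on $V(H)$ and $E(H)$, and condition (b) is obtained from the fact that the induced action preserves the bipartition of $B_H$. Your explicit remark that condition (2b) for $H$ itself is never used is a nice observation that the paper leaves implicit.
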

\begin{proof}
Suppose that $G$ does not act freely on $E(B_H)$. There exist $g\in G$ and $\alpha\in E(B_H)$ such that $g\neq \textrm{id}_G$ and $g(\alpha)=\alpha$. Let $\alpha=(v,e)$, where $v \in V(H)$ and $e\in E(H)$. Since $G$ acts freely on $V(H)$ and $E(H)$, and also $g \neq \textrm{id}_G$, we have that $g(v) \neq v$ and $g(e) \neq e$. It follows that $g(\alpha) \neq \alpha$ by Definition \ref{definition: induced map}. This gives a contradiction, and hence $G$ acts freely on $E(B_H)$. For $V(B_H)=V(H) \sqcup E(H)$, since $G$ acts freely on $V(H)$ and $E(H)$, clearly $G$ acts freely on $V(B_H)$. This shows that $G$ acts freely on $B_H$.

It remains to check the condition (b). Let $V(B_H)=V_1 \sqcup V_2$, where $V_1=V(H)$ and $V_2=E(H)$. Notice that for any $v \in V_i$ and $g \in G$, we have that $g(v) \in V_i$ for $i=1,2$. Since no two vertices within the same set $V_i$ are adjacent, there is no $\alpha \in E(B_H)$ such that $v,g(v) \in \alpha$. This shows that the induced $G$-action on $B_H$ satisfies (b).
\end{proof}

Let $G$ be a group acting on a hypergraph $H$. Let $V(H/G):=V(H)/G$, the set of orbits of $V(H)$ under the action of $G$. Likewise, we let $E(H/G):=E(H)/G$, the set of orbits of $E(H)$ under the action of $G$. We let $[e]$ be the orbit of $e \in E(H)$ in $E(H/G)$. We also let $[v]$ be the orbit of $v \in V(H)$ in $V(H/G)$. For $e \in E(H)$, we define the following set:
\begin{equation}\label{eq: quotient edges}
V_e:=\{[v] \mid v \in e\}
\end{equation}

\begin{lem}\label{lemma: orbits}
With the same notation as above, the set $V_e$ only depends on $[e] \in E(H/G)$. 
\end{lem}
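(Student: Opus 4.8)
The statement to prove is that whenever $e, e' \in E(H)$ satisfy $[e] = [e']$ in $E(H/G)$, one has $V_e = V_{e'}$; this is precisely the assertion that the set $V_e$ depends only on the orbit $[e]$. So the plan is to fix an element $g \in G$ with $e' = g(e)$ and then verify the set equality $V_e = V_{e'}$ directly from the definitions.

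The one point requiring care is the interaction between the edge-component and the vertex-component of the automorphism $g = (g_V, g_E) \in \Aut(H)$: a priori these are separate bijections, so I first want to record that the edge $g_E(e)$, regarded as a subset of $V(H)$, is exactly $\{g_V(v) : v \in e\}$. The inclusion $\{g_V(v) : v \in e\} \subseteq g_E(e)$ is just the incidence-preserving condition from the definition of a morphism of hypergraphs, applied to $g$; the reverse inclusion follows by applying the same condition to the inverse automorphism $g^{-1}$. With this identification in hand, I would compute
\[
V_{e'} = \{[w] : w \in e'\} = \{[w] : w \in g_E(e)\} = \{[g_V(v)] : v \in e\} = \{[v] : v \in e\} = V_e,
\]
where the final equality uses that $g_V(v)$ always lies in the same $G$-orbit as $v$, so that $[g_V(v)] = [v]$.

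The argument has no real obstacle beyond the bookkeeping just described; the entire content is that a hypergraph automorphism carries the vertex set of $e$ bijectively and orbit-preservingly onto the vertex set of $g(e)$. If I wished to avoid establishing the full equality of $g_E(e)$ with $\{g_V(v) : v \in e\}$, I could instead note that the forward incidence inclusion alone yields $V_e \subseteq V_{g(e)}$ for every $g \in G$ and every $e \in E(H)$, and then apply this with $g^{-1}$ in place of $g$ and $e'$ in place of $e$ to obtain the reverse inclusion $V_{e'} \subseteq V_{g^{-1}(e')} = V_e$, giving the equality by symmetry.
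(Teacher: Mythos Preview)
Your proof is correct and follows essentially the same idea as the paper's. The paper argues only one inclusion, observing that it suffices to show $V_e \subseteq V_{e'}$ (the reverse then follows by symmetry in $g$ and $g^{-1}$), which is exactly the alternative you describe at the end; your main computation is a slightly more explicit variant that first pins down $g_E(e) = \{g_V(v) : v \in e\}$ before reading off the equality of orbit sets.
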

\begin{proof}
Suppose that $e'=ge$ for some $g \in G$. It is enough to show that $V_e \subseteq V_{e'}$. But, if $[v]\in V_e$, then there exists $h \in G$ such that $hv \in e$. Now, we have that $(gh)v \in ge=e'$, and $[(gh)v]=[v]$, showing that $V_e \subseteq V_{e'}$.
\end{proof}

From Lemma \ref{lemma: orbits}, we may consider $E(H/G)$ as a set of subsets of $V(E/H)$. Hence, we have the following definition of the quotient hypergraph $H/G$.

\begin{mydef}\label{definition: quotient hypergraph}
Let $G$ be a group acting on a hypergraph $H$. The \emph{quotient hypergraph} $H/G$ consists of vertices $V(H/G)$ and edges $E(H/G)$ (considered as a set of subsets of $V(E/H)$ as noted above). 
\end{mydef}

With Definition \ref{definition: quotient hypergraph}, there exists a natural morphism $\pi=(\pi_V,\pi_E):H \to H/G$ defined as follows: for vertices, any $v \in V(H)$ goes to the orbit $[v]$ and any $e \in E(H)$ goes to its orbit $[e]$ by using the identification in Lemma \ref{lemma: orbits}. It is clear that the map $\pi$ is well-defined. Moreover, one can easily see that it is a morphism of hypergraphs. In fact, suppose that $v \in e$ for $v \in V(H)$ and $e \in E(H)$. Now, we have to show that $\pi_V(v)=[v] \in \pi_E(e)=[e]$. But, this is clear from Lemma \ref{lemma: orbits}.

\begin{myeg}\label{example: quotient used}
Consider the hypergraph $Y=(V_Y,E_Y)$ with
\[
V_Y=\{v_1,v_2,\dots,v_6\}, \quad E_Y=\{e_1,e_2,e_3,e_4\},
\]
where
\[
e_1=\{v_2,v_3,v_5\}, \quad e_2=\{v_1,v_4,v_6\}, \quad e_3=\{v_2,v_4\}, \quad e_4=\{v_1,v_3\}.
\]
Pictorially, $Y$ is as follows:
\[
\begin{tikzpicture}
    \node (v1) at (4,0) {};
    \node (v2) at (0,3) {};
    \node (v4) at (4,3) {};
    \node (v3) at (0,0) {};
    \node (v5) at (0.7,1.5) {};
    \node (v6) at (3.7,1.5) {};

 \begin{scope}[fill opacity=0.3]   
 \filldraw[fill=lime!70] ($(v3)+(-0.5,0.2)$)
        to[out=90,in=180] ($(v3)+(0,3.5)$)
        to[out=0,in=90] ($(v3)+(1,0.5)$)
        to[out=270,in=0] ($(v3)+(-0.1,-0.6)$)
        to[out=180,in=270] ($(v3)+(-0.5,0.2)$);
 \filldraw[fill=green!70] ($(v1)+(-0.5,0.2)$)
        to[out=90,in=180] ($(v1)+(0,3.5)$)
        to[out=0,in=90] ($(v1)+(1,0.5)$)
        to[out=270,in=0] ($(v1)+(-0.1,-0.6)$)
        to[out=180,in=270] ($(v1)+(-0.5,0.2)$);
  \filldraw[fill=red!30] ($(v2)+(-0.5,-0.2)$) 
        to[out=90,in=180] ($(v2) + (0.2,0.4)$) 
        to[out=0,in=180] ($(v4) + (0,0.3)$)
        to[out=0,in=90] ($(v4) + (0.3,-0.1)$)
        to[out=270,in=0] ($(v4) + (0,-0.8)$)
        to[out=180,in=0] ($(v4) + (-2,-0.8)$)
        to[out=180,in=270] ($(v2)+(-0.5,-0.2)$); 
  \filldraw[fill=yellow!40] ($(v3)+(-0.5,-0.2)$) 
        to[out=90,in=180] ($(v3) + (0.2,0.4)$) 
        to[out=0,in=180] ($(v1) + (0,0.3)$)
        to[out=0,in=90] ($(v1) + (1,-0.1)$)
        to[out=270,in=0] ($(v1) + (0,-0.8)$)
        to[out=180,in=0] ($(v1) + (-2,-0.8)$)
        to[out=180,in=270] ($(v3)+(-0.5,-0.2)$); 
    \end{scope}
 \foreach \v in {1,2,...,6} {
        \fill (v\v) circle (0.1);
    }
   \fill (v1) circle (0.1) node [right] {$v_1$};
    \fill (v2) circle (0.1) node [right] {$v_2$};
    \fill (v3) circle (0.1) node [right] {$v_3$};
    \fill (v4) circle (0.1) node [below] {$v_4$};
    \fill (v5) circle (0.1) node [below] {$v_5$};
    \fill (v6) circle (0.1) node [right] {$v_6$};

 \node at (-0.2,1.5) {$\textcolor{red}{e_1}$};
    \node at (4.7,1.5) {$\textcolor{red}{e_2}$};
    \node at (2,3) {$\textcolor{red}{e_3}$};
    \node at (2,0) {$\textcolor{red}{e_4}$};
\end{tikzpicture}
\]
Consider the automorphism $f=(f_V,f_E)$ defined as follows:
\[
f_V(v_1)=v_2, \quad f_V(v_2)=v_1, \quad f_V(v_3)=v_4, \quad f_V(v_4)=v_3, \quad f_V(v_5)=v_6, \quad f_V(v_6)=v_5,
\]
\[
f_E(e_1)=e_2, \quad f_E(e_2)=e_1, \quad f_E(e_3)=e_4, \quad f_E(e_4)=e_3.
\]
Let $G=\{\textrm{id}_G,f\}$. One can check that $G$ acts freely on $Y$. Hence, one obtains $X=Y/G=(V_X,E_X)$:
\[
V_X=\{u_1,u_2,u_3\}, \quad E_X=\{f_1,f_2\}, \quad \textrm{where}\quad f_1=\{u_1,u_2\}, \quad f_2=\{u_1,u_2,u_3\}.
\]
Note that as the orbits of $Y$ under $G$-action, we have
\[
u_1=\{v_1,v_2\}, \quad u_2=\{v_3,v_4\}, \quad u_3=\{v_5,v_6\}, \quad f_1=\{e_3,e_4\}, \quad f_2=\{e_1,e_2\}.
\]
Pictorially, $X=Y/G$ is as follows:
\[
\begin{tikzpicture}
    \node (u1) at (0,1.2) {};
    \node (u2) at (0.5,0.9) {};
    \node (u3) at (0,-0.3) {};

 \begin{scope}[fill opacity=0.3]   
 \filldraw[fill=green!70]
        ($(u1)+(-0.5,0.2)$)
        to[out=90,in=180] ($(u1)+(0.3,0.5)$)
        to[out=0,in=90] ($(u1)+(1.4,0.5)$)
        to[out=270,in=0] ($(u1)+(-0.5,-1)$)
        to[out=180,in=270] ($(u1)+(-0.5,0.2)$);

 \filldraw[fill=red!30]  
     ($(u1)+(-1.5,-1.5)$)
        to[out=90,in=180] ($(u3)+(1,3)$)
        to[out=0,in=90] ($(u3)+(2,0.5)$)
        to[out=270,in=0] ($(u3)+(-0.3,-1)$)
        to[out=180,in=270] ($(u3)+(-1.5,0.2)$);
    \end{scope}
 \foreach \v in {1,2,3} {
        \fill (u\v) circle (0.1);
    }
   \fill (u1) circle (0.1) node [right] {$u_1$};
    \fill (u2) circle (0.1) node [right] {$u_2$};
    \fill (u3) circle (0.1) node [right] {$u_3$};

 \node at (0.8,1.5) {$\textcolor{red}{f_1}$};
    \node at (1.5,0) {$\textcolor{red}{f_2}$};
\end{tikzpicture}
\]
\end{myeg}

\begin{lem}
Let $G$ be a group acting freely on a hypergraph $H$ without degree-1 vertices. Then, the quotient $H/G$ is also a hypergraph without degree-1 vertices.
\end{lem}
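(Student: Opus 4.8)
The plan is to verify directly that every vertex of the quotient $H/G$ lies in at least two distinct edges, using the two clauses of freeness from Definition \ref{definition: free action}. First I would note that $H/G$ is indeed a hypergraph: since $\bigcup_{e \in E(H)} e = V(H)$ and the natural projection $\pi$ is surjective on vertices and edges, we have $\bigcup_{[e]} V_e = V(H/G)$, so every quotient vertex lies in at least one quotient edge. The real content is the degree condition.

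Fix a vertex $[v] \in V(H/G)$ and choose a representative $v \in V(H)$. Because $H$ has no degree-1 vertices, there exist two \emph{distinct} edges $e_1, e_2 \in E(H)$ with $v \in e_1$ and $v \in e_2$. By Lemma \ref{lemma: orbits} we then have $[v] \in V_{e_1}$ and $[v] \in V_{e_2}$, so $[v]$ is incident to both $[e_1]$ and $[e_2]$ in $H/G$. It remains to show that these two quotient edges are genuinely different, i.e.\ $[e_1] \neq [e_2]$.

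I would argue this by contradiction, and this is where the freeness hypothesis does all the work. Suppose $[e_1] = [e_2]$. Then $e_2 = g(e_1)$ for some $g \in G$, and since $e_1 \neq e_2$, clause (a) of freeness forces $g \neq \mathrm{id}_G$. From $v \in e_2 = g(e_1)$ we obtain $g^{-1}(v) \in e_1$, while also $v \in e_1$. Applying clause (b) of freeness to the incidence $v \in e_1$ with the element $g^{-1} \neq \mathrm{id}_G$ gives $g^{-1}(v) \notin e_1$, a contradiction. Hence $[e_1] \neq [e_2]$, and so $[v]$ lies in two distinct edges of $H/G$, as required.

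The only delicate point is this last step: one must be careful to apply clause (b) in the correct direction, rewriting the incidence $v \in g(e_1)$ as $g^{-1}(v) \in e_1$ so that both $v$ and its nontrivial translate $g^{-1}(v)$ sit inside the \emph{same} edge $e_1$, which is exactly what clause (b) prohibits. This is also the place where freeness is genuinely needed: without clause (b), two distinct edges of $H$ through $v$ could fall into the same orbit, collapsing to a single edge of $H/G$ and potentially creating a degree-1 vertex. Everything else is routine bookkeeping with the orbit construction of Definition \ref{definition: quotient hypergraph}.
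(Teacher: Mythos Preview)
Your proof is correct and follows essentially the same approach as the paper's: both pick a representative $v$, find two distinct edges of $H$ through it, and use clause~(b) of freeness to show their orbits remain distinct in $H/G$. The only cosmetic difference is that the paper applies the group element forward to land both $v$ and $h(v)$ in $e'$, whereas you apply $g^{-1}$ to land $v$ and $g^{-1}(v)$ in $e_1$; the contradiction with clause~(b) is identical.
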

\begin{proof}
Suppose that $H/G$ has a degree-1 vertex, i.e., there exists a vertex $[v]\in V(H/G)$ which only belongs to one edge $[e] \in E(H/G)$. Now, it is clear that $v \in g(e)$ for some $g \in G$. Hence we may assume that $v \in e$. Since $H$ does not have a degree-1 vertex, there exists another edge $e'$ such that $v \in e'$. We claim that $[e'] \neq [e]$. In fact, if $[e']=[e]$, then $h(e)=e'$ for some $h \neq \textrm{id}_G\in G$. It follows that $h(v) \in h(e)=e'$ since $v \in e$. In particular, we have $v, h(v) \in e'$, contradicting the condition (b) in Definition \ref{definition: free action}.
\end{proof}

\begin{lem}\label{lemma: gal lift}
Let $G$ be a group acting freely on a hypergraph $H$, and $\pi:H \to H/G$ be the projection map. Let $v_0 \in V(H/G)$. Fix a vertex $v_0' \in V(H)$ such that $\pi(v_0')=v_0$. Then, for any $v_1 \in V(H/G)$ and an edge $e \in Y/G$ containing $v_0,v_1$, there exists a unique edge $e' \in (Y)$ such that $v_0' \in e'$ and $\pi(e')=e$. 
\end{lem}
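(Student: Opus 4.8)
The plan is to prove existence and uniqueness separately, drawing on the explicit description of $H/G$ in Definition~\ref{definition: quotient hypergraph} and on the two defining conditions of a free action in Definition~\ref{definition: free action}. I note at the outset that only the containment $v_0 \in e$ is actually used; the vertex $v_1$ merely records that $e$ is a genuine edge through $v_0$ and plays no role in the argument (and I read the $Y$'s in the statement as $H$).

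For existence, I would pick a representative $\tilde e \in E(H)$ with $[\tilde e] = e$. Since $v_0 \in e$, the description of the edges of $H/G$ through the set $V_e$ of \eqref{eq: quotient edges}, which is well defined by Lemma~\ref{lemma: orbits}, yields a vertex $w \in \tilde e$ with $[w] = v_0$. Because $\pi(v_0') = v_0 = [w]$, the vertices $v_0'$ and $w$ lie in one $G$-orbit, so $v_0' = hw$ for some $h \in G$. Then $e' := h\tilde e$ satisfies $v_0' = hw \in h\tilde e = e'$ and $\pi(e') = [h\tilde e] = [\tilde e] = e$, giving the required lift.

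For uniqueness, suppose $e_1'$ and $e_2'$ both satisfy $v_0' \in e_i'$ and $\pi(e_i') = e$. Equality of images forces $e_1'$ and $e_2'$ into a common orbit, say $e_2' = g e_1'$ with $g \in G$; the goal is to show $g = \textrm{id}_G$. From $v_0' \in e_2' = g e_1'$ I get $g^{-1}v_0' \in e_1'$, while by hypothesis $v_0' \in e_1'$ as well. Applying condition (b) of Definition~\ref{definition: free action} to the vertex $g^{-1}v_0' \in e_1'$ shows that its image $g(g^{-1}v_0') = v_0'$ cannot belong to $e_1'$ unless $g = \textrm{id}_G$; since $v_0' \in e_1'$, we conclude $g = \textrm{id}_G$ and hence $e_1' = e_2'$.

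The crux is the uniqueness step, which is exactly where condition (b) --- the part of freeness forbidding an edge to meet two vertices of the same orbit --- is indispensable; freeness on vertices and edges alone (condition (a)) would not rule out distinct lifts. The existence half is a routine unwinding of the quotient construction, so I anticipate no real difficulty there.
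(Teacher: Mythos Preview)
Your proof is correct and follows essentially the same approach as the paper. Your uniqueness argument is identical in spirit (both derive that $v_0'$ and a nontrivial $G$-translate of it lie in the same edge, contradicting condition~(b)); your existence argument is actually more explicit than the paper's, which simply asserts that such an $e'$ exists because $v_0\in e$.
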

\begin{proof}
First, choose $e'\in E(H)$ so that $\pi(e')=e$ and $v'_0\in e'$. Because $v_0\in e$, there exists such $e'$. To show uniqueness of such $e'$, suppose that $e''\in E(H)$ is another edge that satisfies the same properties as $e'$, i.e., 
\[
e'' \neq e', \quad \pi(e'')=\pi(e')=e, \quad v_0' \in e'', \quad v_0' \in e'.
\]
Since $\pi(e'')=\pi(e')$, we know that $e''$ and $e'$ belong to the same $G$-orbit, i.e., there exists $g\in G$ such that 
\[
g\neq \textrm{id}_G, \quad \textrm{and} \quad g(e'')=e'.
\]
Since $v'_0\in e''$, we have that 
\[
g(v'_0)\in g(e'')=e'.
\]
Hence both $v'_0$ and $g(v'_0)$ are incident to $e'$. This contradicts the condition (b) in Definition \ref{definition: free action}, showing that the lift $e'$ of $e$ is unique.    
\end{proof}

\begin{pro}\label{proposition: galois is cover}
If $G$ acts freely on a hypergraph $H$, then $\pi:H \to H/G$ is a covering in the sense of \cite{li2018hypergraph}.
\end{pro}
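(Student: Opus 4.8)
The plan is to verify the two defining conditions of a hypergraph covering in the sense of \cite{li2018hypergraph}, mirroring the graph covering definition recalled in Section \ref{subsection: L-functions for graphs}: that $\pi_V$ is surjective and that $\pi$ restricts to an isomorphism on the neighborhood of each vertex. Surjectivity of $\pi_V$ is immediate from the construction of the quotient, since every orbit $[v] \in V(H/G)$ has a representative $v \in V(H)$. It was already observed right after Definition \ref{definition: quotient hypergraph} that $\pi = (\pi_V,\pi_E)$ is a morphism of hypergraphs, so only the local isomorphism condition requires work.

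First I would fix a vertex $v_0 \in V(H/G)$ together with a lift $v_0' \in \pi_V^{-1}(v_0)$, and analyze the map that $\pi$ induces between the edges incident to $v_0'$ and the edges incident to $v_0$. The content of Lemma \ref{lemma: gal lift} is precisely that this map is a bijection: for any edge $e \in E(H/G)$ containing $v_0$, there is a unique edge $e' \in E(H)$ with $v_0' \in e'$ and $\pi_E(e') = e$. Existence gives surjectivity of the induced map on incident edges, while uniqueness gives injectivity, since two edges at $v_0'$ lying over the same $e$ must coincide. Next I would check that for each such pair with $\pi_E(e')=e$, the map $\pi_V$ restricts to a bijection from the vertices of $e'$ onto the vertices of $e$. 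By Lemma \ref{lemma: orbits} the vertices of $e$ are exactly the orbits $\{[w] \mid w \in e'\}$, giving surjectivity; injectivity is exactly condition (b) of Definition \ref{definition: free action}, because if $w_1,w_2 \in e'$ satisfied $[w_1]=[w_2]$ then $w_2 = g(w_1)$ for some $g$, and $w_1,g(w_1) \in e'$ would force $g = \textrm{id}_G$. Combining the bijection on incident edges with the bijection on the vertices inside each edge shows that $\pi$ carries the neighborhood of $v_0'$ isomorphically onto the neighborhood of $v_0$, which is the required local isomorphism.

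The main obstacle is bookkeeping rather than conceptual: one must assemble the two bijections (on the edges at $v_0'$ and on the vertices within each lifted edge) into a single neighborhood isomorphism that respects incidence in both directions, and confirm that the neighborhood used by \cite{li2018hypergraph} is exactly the local sub-hypergraph assembled here. The two hypotheses enter in complementary ways: the unique edge-lifting of Lemma \ref{lemma: gal lift} rules out distinct edges at $v_0'$ collapsing to one edge at $v_0$, while the separation property (b) of the free action rules out distinct vertices inside a single edge collapsing under $\pi_V$. Together they eliminate the only two failure modes for the local map, so the covering condition follows.
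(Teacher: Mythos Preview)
Your proposal is correct and uses essentially the same approach as the paper: both arguments rely on Lemma \ref{lemma: gal lift} for the bijection on incident edges and on condition (b) of Definition \ref{definition: free action} for injectivity on vertices. The only difference is organizational---you establish the edge bijection first and then the vertex bijection within each lifted edge, whereas the paper handles the vertex bijection on $N(v_0')$ first and the edge bijection second---but the ingredients and logic are the same.
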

\begin{proof}
By definition of $\pi$, $\pi_V$ is surjective. Hence we only have to prove that the following
\[
\pi|_{N(v_0')}:N(v_0') \to N(v_0)
\]
is an isomorphism for all $v_0 \in V(H/G)$ and $v_0' \in \pi^{-1}_V(v_0)$. 

We first show that $\pi|_{N(v_0')}$ is a bijection on vertices. If $[u] \in N(v_0)=N([v_0'])$, then there exists $[e] \in E_{H/G}$ such that $[v_0'],[u] \in [e]$. It follows that $v_0' \in ge$ for some $g \in G$. Now, $ge$ contains $hu$ for some $h \in G$: if not $\pi(ge)=[e] \not \ni [u]$, giving us a contradiction. So, we have $v_0',hu \in ge$, and hence $hu \in N(v_0')$, showing that $\pi|_{N(v_0')}$ is surjective on vertices.  

To see that $\pi|_{N(v_0')}$ is injective on vertices, suppose that there exist $u,u' \in N(v_0')$ such that 
\begin{equation}\label{eq: rmk}
\pi(u)=\pi(u')=v_1 \in N(v_0).
\end{equation}
Then, we have $e \in E(H/G)$ containing both $v_0$ and $v_1$. It follows from Lemma \ref{lemma: gal lift} that there exists a unique edge $e'$ such that $v_0' \in e'$ and $\pi(e')=e$. We claim that $u,u' \in e'$. In fact, since $u,u' \in N(v_0')$, there exist $e_1,e_2 \in E(H)$ such that
\[
u,v_0' \in e_1, \quad u',v_0' \in e_2. 
\]
It follows that $\pi(e_1)$ and $\pi(e_2)$ contains $v_0$ and $v_1$, i.e., $e_1$ and $e_2$ are the unique lift $e'$ of $e$ in Lemma \ref{lemma: gal lift}, showing that $u,u' \in e'$. Moreover, from \eqref{eq: rmk}, we have $u=gu'$ for some $g \in G$. But, since $G$ acts freely on $H$, by Definition \ref{definition: free action} (2)(b), we have that $g=\textrm{id}_G$, or $u=u'$.

Next, we prove that $\pi|_{N(v_0')}$ is a bijection on edges. We first show that it is injective. For distinct $e',e'' \in E(H)$, suppose that 
\[
\pi(e')=\pi(e'')=e,
\]
where $v_1, v_0 \in e$ for some $v_1 \in N(v_0)$ and $v_0' \in e', e''$. Then, there exists $g \neq \textrm{id}_G \in G$ such that $e''=ge'$. It follows that $v_0' \in e'$ and $v_0' \in ge'$. Equivalently,
\[
g^{-1}(v_0') \in e' \quad \textrm{and} \quad v_0' \in e'.
\]
This contradicts Definition \ref{definition: free action} (2)(b). Hence $\pi|_{N(v_0')}$ is injective on edges.

It remains to show that $\pi|_{N(v_0')}$ is surjective on edges. But, this directly follows from the proof of Lemma \ref{lemma: gal lift}.
\end{proof}

\section{Artin-Ihara L-functions for hypergraphs}\label{section: Artin-Ihara}

In this section, we introduce Artin-Ihara L-functions for hypergraphs. There are several (inequivalent) definitions of a spanning tree of a hypergraph. Moreover, it is well-known that a connected hypergraph may not have a spanning tree, depending on a definition of spanning trees. For example, suppose that we define a spanning tree of a hypergraph $H$ to be a sub-hypergraph $T$ satisfying the following two conditions: (1) $T$ contains all vertices of $H$, and (2) for any $v_1,v_2 \in V(T)$, there exists a unique path from $v_1$ to $v_2$. Now, consider the hypergraph $H$ on four vertices with all possible edges with precisely three vertices. To find a spanning tree of $H$ (with the above definition), one has to pick at least two edges but any two edges form a cycle. To avoid this issue, we use instead the associated bipartite graphs and their spanning trees to define Frobenius elements in the setting of hypergraphs.

We first prove that for a hypergraph covering $\pi:Y \to X$ we can lift a path $P$ in $X$ to a path $P'$ in $Y$ in a unique way once we fix a lift of the initial vertex of $P$. It then follows from Proposition \ref{proposition: galois is cover} that the same property (uniqueness of lifts of paths) holds for $\pi:H \to H/G$ when $G$ acts freely on $H$.

\begin{lem}\label{lemma: unique path lift}
Let $\pi:Y \to X$ be a hypergraph covering as in \cite{li2018hypergraph}. Let $P=(v_0,e_1,v_1,e_2,v_2,...,e_n,v_n)$ be a path in $X$ starting at a vertex $v_0$. Fix a vertex $v'_0$ of $H$ such that $\pi(v'_0)=v_0$. There is a unique path $P'$ in $Y$ starting at $v_0'$ such that $\pi(P')=P$. 
\end{lem}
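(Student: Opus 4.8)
The plan is to lift the path $P$ one step at a time, using the local isomorphism property of the covering $\pi$ at each vertex to obtain both existence and uniqueness simultaneously. I would proceed by induction on the length $n$ of the path. The base case $n=0$ is the given choice of lift $v_0'$ of $v_0$. For the inductive step, suppose we have already constructed a unique partial lift $(v_0', e_1', v_1', \dots, e_i', v_i')$ with $\pi(v_j') = v_j$ and $\pi(e_j') = e_j$. I would then lift the next pair $(e_{i+1}, v_{i+1})$.

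The key technical input is Lemma \ref{lemma: gal lift} (or rather its analogue for a general covering, which the restriction-isomorphism definition supplies directly). The relevant observation is that $v_i \in e_{i+1}$ and $v_{i+1} \in e_{i+1}$ in $X$, so $v_{i+1} \in N(v_i)$ via the edge $e_{i+1}$. Since $\pi\mid_{N(v_i')}:N(v_i') \to N(v_i)$ is an isomorphism by the definition of a covering, there is a \emph{unique} edge $e_{i+1}'$ incident to $v_i'$ with $\pi(e_{i+1}') = e_{i+1}$, and a \emph{unique} vertex $v_{i+1}' \in N(v_i')$ lying in $e_{i+1}'$ with $\pi(v_{i+1}') = v_{i+1}$. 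This is exactly where the covering hypothesis does the work: the local isomorphism on neighborhoods turns the ambiguity of which preimage edge/vertex to pick into a unique choice. Concatenating gives the unique lift of length $i+1$, completing the induction.

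First I would set up the induction carefully so that the inductive hypothesis asserts uniqueness of the \emph{entire} partial path, not merely of the last vertex; this matters because uniqueness of $P'$ as a whole follows from the fact that each step's choice is forced. Then I would invoke the neighborhood isomorphism to extract the unique next edge and next vertex, checking that $t(e_{i+1}')=v_{i+1}'$ so that the resulting sequence is genuinely a path in $Y$ (i.e.\ satisfies the incidence conditions $v_i' \in e_{i+1}'$ and $v_{i+1}' \in e_{i+1}' \cap \dots$ required of a hypergraph path). Finally I would remark that since $G$ acts freely on $H$, Proposition \ref{proposition: galois is cover} shows $\pi:H \to H/G$ is a covering, so the same unique path-lifting conclusion applies verbatim in the Galois setting, which is the case we actually need for defining Frobenius elements.

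The main obstacle I anticipate is purely bookkeeping rather than conceptual: one must verify that the unique lift supplied by the neighborhood isomorphism is compatible with the incidence structure of a hypergraph \emph{path} in the sense of the definition in \cite{storm2006zeta}, namely that the lifted vertices and edges satisfy $v_i' \in e_i' \cap e_{i+1}'$ at each interior step. Because the neighborhood $N(v_i')$ is the subgraph induced by vertices adjacent to $v_i'$ and the restriction is an honest isomorphism, this compatibility is automatic, but it should be stated explicitly to make the argument airtight. No delicate estimate or deep structural fact is required beyond repeated application of the local isomorphism.
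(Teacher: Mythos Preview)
Your proposal is correct and takes essentially the same approach as the paper: both lift the path one step at a time using the local isomorphism $\pi\mid_{N(v_i')}:N(v_i')\to N(v_i)$ to obtain a unique edge $e_{i+1}'$ and vertex $v_{i+1}'$ over $e_{i+1}$ and $v_{i+1}$. The paper's proof is a two-sentence sketch of exactly this iteration, while yours spells out the induction and the incidence checks more carefully; your aside about Lemma~\ref{lemma: gal lift} is unnecessary (that lemma is for the free-action case, and here the neighborhood isomorphism from the covering definition already gives what you need), but you correctly note this yourself.
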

\begin{proof}
The proof is straightforward. To be precise, since $v_0,v_1 \in e_1$ and we fixed $\pi(v_0')=v_0$, there exist unique $v_1' \in V(Y)$ and $e_1' \in E(Y)$ such that $\pi(v_1')=v_1$ and $\pi(e_1')=e_1$ since a hypergraph covering is an isomorphism when we restrict it to neighborhoods. We can repeat the process of lifting edges and vertices, until we get $P'$.
\end{proof}

\begin{lem}\label{lemma: sheets lemma}
Let $H$ be a connected hypergraph. Then, the following hold.
\begin{enumerate}
\item 
If a group $G$ acts freely on $H$, then $H/G$ is connected. 
    \item 
$B_H$ is connected. 
\item 
Let $T$ be a spanning tree of $B_H$ and $v_0 \in V(H)$. For each $v_i\in V(H)$, there is a unique path from $v_0$ to $v_i$, such that the corresponding path in $B_{H}$ is contained in $T$.
\end{enumerate}
  
\end{lem}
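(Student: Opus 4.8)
The plan is to treat the three parts in order, each one reducing to the same dictionary between hypergraph paths in $H$ and ordinary graph walks in $B_H$: the incidence conditions $w_i \in e_i \cap e_{i+1}$ that define a hypergraph path are precisely the adjacency relations in $B_H$. For part (1), I would take two orbits $[u],[v]\in V(H/G)$ and pick representatives $u,v\in V(H)$. Since $H$ is connected there is a path $P=(u=w_0,e_1,w_1,\dots,e_n,w_n=v)$ in $H$, and applying $\pi:H\to H/G$ entrywise yields the sequence $([w_0],[e_1],[w_1],\dots,[e_n],[w_n])$. Because $\pi$ is a morphism of hypergraphs the incidences $w_i\in e_i\cap e_{i+1}$ descend to $[w_i]\in[e_i]\cap[e_{i+1}]$ (this is exactly the content of Lemma \ref{lemma: orbits}), so $\pi(P)$ is a genuine path from $[u]$ to $[v]$, proving that $H/G$ is connected.

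For part (2), I would first note that any hypergraph path $(w_0,e_1,w_1,\dots,e_n,w_n)$ in $H$ determines a walk $w_0\,e_1\,w_1\cdots e_n\,w_n$ in $B_H$, since each incidence $w_i\in e_j$ is an edge of $B_H$. Given two vertices $a,b$ of $B_H=V(H)\sqcup E(H)$, I would connect each edge-type vertex to a vertex-type one: if $a\in E(H)$ then, $a$ being a nonempty hyperedge, it contains some $w\in V(H)$ and is adjacent to $w$ in $B_H$, and similarly for $b$. It then remains to join two vertices of $V(H)$ inside $B_H$, which follows by translating a path in the connected hypergraph $H$ into its corresponding walk. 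Hence $B_H$ is connected.

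For part (3), I would use that $T$, being a spanning tree of the connected graph $B_H$ established in part (2), contains both $v_0$ and $v_i$ and admits a unique simple path between them. Because $B_H$ is bipartite with parts $V(H)$ and $E(H)$, this tree-path alternates between the two parts and, starting and ending at the $V(H)$-vertices $v_0,v_i$, has the shape $v_0,e_1,w_1,\dots,e_k,v_i$; reading off its entries gives a hypergraph path in $H$ from $v_0$ to $v_i$ whose associated $B_H$-path lies in $T$, which gives existence. For uniqueness, suppose a hypergraph path $Q$ from $v_0$ to $v_i$ also has its associated $B_H$-path contained in $T$; that associated path is then a simple path in the tree $T$ between $v_0$ and $v_i$, so by uniqueness of tree-paths it coincides with the one above, forcing $Q$ to equal the constructed path.

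I expect the main obstacle to be part (3): the care needed to match the bipartite alternation of a tree-path with the defining format of a hypergraph path, and to phrase uniqueness so that ``the corresponding path is contained in $T$'' is read as a simple path in the tree (where uniqueness is automatic) rather than an arbitrary walk, for which backtracking inside $T$ would spoil the claim. Parts (1) and (2) are routine once the path/walk dictionary and the nonemptiness of hyperedges are in place.
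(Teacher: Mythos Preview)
Your proposal is correct and follows essentially the same approach as the paper's proof: project a path for (1), reduce edge-type vertices to vertex-type ones via an incident vertex for (2), and in (3) take the unique tree-path in $B_H$, use bipartiteness to see it alternates, read off the hypergraph path, and deduce uniqueness from uniqueness of paths in a tree. Your flagged caveat about reading the ``corresponding path in $T$'' as a simple path is exactly the tacit assumption underlying the paper's own uniqueness argument.
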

\begin{proof}
$(1)$: This is clear as we may find a path in $H$, and consider $\pi(H)$ in $H/G$.

$(2)$: Let $u,v \in V(B_H)$. If both $u,v$ are from $V(H)$, then this is clear as $H$ is connected. If both $u,v$ are from $E(H)$, then we choose vertices $v_1 \in u$ and $v_2 \in v$ in $H$. Now, we take a path from $v_1$ to $v_2$ in $H$, then we add two more edges from $v_1$ to $u$ and $v_2$ to $v$ in $B_H$. This produces a path from $u$ to $v$ in $B_H$. The last case is when $u$ is from $V(H)$ and $v$ is from $E(H)$. In this case, we may also choose a vertex $v'\in v$ in $H$ so that we can reduce this to the first case. 

$(3)$: Since $T$ is a spanning tree, we have $V(T)=V(B_H)$. Let $v_i \in V(T)=V(B_H)$. Since $T$ is a tree, there exists a unique path $P$ from $v_0$ to $v_i$, say
\[
P=(v_0,\alpha_0,e_0,\beta_0,v_1,\alpha_1,e_1,\beta_1,v_2,\alpha_2,e_2,\beta_2,\dots,v_{i-1},\alpha_{i-1},e_{i-1},\beta_{i-1},v_i),
\]
where $\alpha_k=(v_k,e_k),\beta_k=(e_k,v_{k+1})\in E(T)$, %i.e., $\alpha_k = (v_k,e_k)$ and $\beta_k=(e_k,v_k)$ 
for some $e_k \in E(H)$ and $v_k, v_{k+1} \in e_k$ for $k=0,\dots, i-1$.  It follow that 
\[
(v_0,e_0,v_1,\dots,e_{i-1},v_i)
\]
is a path from $v_0$ to $v_i$ in $H$. Now, the uniqueness is clear since if there are two different paths from $v_0$ to $v_i$ in $H$, then it gives us two different paths in $T$ from $v_0$ to $v_i$ (considered as vertices of $T$). 
\end{proof}

\begin{rmk}
Lemma \ref{lemma: sheets lemma} (3) suggests a potential definition of spanning trees of hypergraphs. To be precise, let $H$ be a hypergraph. Fix a spanning tree $T$ in $B_H$. Then, from Lemma \ref{lemma: sheets lemma} (3), there is a distinguished way to construct a path between any two vertices of $H$ so that the corresponding path in $B_H$ lines in $T$. Let $P$ be the union of these paths. Then $P$ is a sub-hypergraph of $H$ with the following properties:
\begin{enumerate}
   \item 
$P$ contains all vertices of $H$. 
\item 
Any two vertices of $H$ are connected by a unique path in $P$. 
\end{enumerate}
Lemma \ref{lemma: sheets lemma} (3) shows the existence of such $P$. For instance, consider the following hypergraph $H$, which we wrote at the beginning of Section 4:
\[
V(H)=\{v_1,v_2,v_3,v_4\}, \quad E(H)=\{e_1,e_2,e_3,e_4\}, \quad e_i = V(H) - \{v_i\}.
\]
Now consider the following spanning tree $T$ of $B_H$:
\[
V(T)=\{v_1,v_2,v_3,v_4,e_1,e_2,e_3,e_4\}
\]
\[
E(T)=\{(v_1,e_2),(e_2,v_3),(v_3,e_4),(e_4,v_2,),(v_2,e_1),(e_1,v_4),(v_4,e_3)\}.
\]
One can check that P defined above is the following:
\[
V(P)=\{v_1,v_2,v_3,v_4\}, \quad E(P)=\{e_1',e_2',e_3',e_4'\},
\]
where
\[
e_1'=\{v_2,v_4\}, \quad e_2'=\{v_1,v_3\}, \quad e_3'=\{v_4\}, \quad e_4'=\{v_2,v_3\}.
\]
We are grateful to an anonymous referee for pointing this out. 
\end{rmk}

We can now partition the vertices of a hypergraph into sheets.\footnote{We use the same terminology as in \cite{terras2010zeta} and \cite{zakharov2021zeta}.} As before, let $G$ be a group acting freely on a connected hypergraph $H$. We cannot lift a spanning tree of $H/G$ since not all hypergraphs have spanning trees. We use instead a spanning tree of the associated bipartite graph $B_{H/G}$. Let $\pi:H \to H/G$ be a natural projection map. Here are the steps. 

\begin{construction}[Sheet numbers]\label{construction: sheet partition} $ $
\begin{enumerate}
    \item 
Choose a spanning tree $T$ of $B_{H/G}$. This is possible by Lemma \ref{lemma: sheets lemma}.
\item 
Choose arbitrary $v_0\in V(H/G)$ and $v'_0\in V(H)$ so that $\pi(v'_0)=v_0$.
\item 
For each $v_i\in V(H/G)$, there is a unique path from $v_0$ to $v_i$, such that the corresponding path in $B_{H/G}$ is contained in $T$. Call this path $P_i$. This is possible by Lemma \ref{lemma: sheets lemma} applied to $H/G$. 
\item 
Lift $P_i$ to a unique path $P'_i$ in $H$ with initial vertex $v'_0$. This is possible by Lemma \ref{lemma: unique path lift}. Do this for all $v_i\in V(H/G)$.
\item 
The set of the terminal vertices of all $P'_i$ will be the \emph{sheet} corresponding to $\textrm{id}_G \in G$. The \emph{sheet number} of some other $v\in V(H)$ will be $g\in G$ such that there exists $v'_i$ in sheet $\textrm{id}_G$ so that $g(v'_i)=v$.
\end{enumerate}
\end{construction}

\begin{lem}
With the same notation as above, the sheet numbers partition the vertices $V(H)$. 
\end{lem}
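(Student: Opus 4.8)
The plan is to show that the sheet numbers do two things simultaneously: every vertex of $H$ receives at least one sheet number, and no vertex receives more than one. Recalling Construction \ref{construction: sheet partition}, the sheet $\textrm{id}_G$ is the set $S$ of terminal vertices of the lifted paths $P_i'$, and the sheet number of an arbitrary $v \in V(H)$ is a $g \in G$ with $g(v_i') = v$ for some $v_i' \in S$. So concretely I must prove that for every $v \in V(H)$ there exists a unique pair $(g, v_i') \in G \times S$ with $g(v_i') = v$. This is really a statement that $G$ acts freely on $V(H)$ with the terminal vertices $S$ forming a transversal (a complete set of orbit representatives) for that action.

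**First I would** establish that the terminal vertices of the $P_i'$ are in bijection with $V(H/G)$, i.e.\ that distinct $v_i \in V(H/G)$ lift to distinct terminal vertices in $S$, and that each $v_i$ lies over the right orbit. For the orbit claim, note that by Lemma \ref{lemma: unique path lift} the lift $P_i'$ projects to $P_i$ under $\pi$, so its terminal vertex $w_i$ satisfies $\pi(w_i) = v_i$; hence $w_i$ lies in the $G$-orbit $v_i \in V(H/G) = V(H)/G$. Since the $v_i$ range over all of $V(H/G)$, the set $S = \{w_i\}$ meets every orbit of the $G$-action on $V(H)$. That distinct $v_i$ give distinct $w_i$ (so $S$ meets each orbit in exactly one point) follows because $\pi(w_i) = v_i$ are distinct, so the $w_i$ are certainly distinct.

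**Next**, to get existence of a sheet number for an arbitrary $v \in V(H)$, let $v_i := \pi(v) \in V(H/G)$ and let $w_i \in S$ be the corresponding terminal vertex. Since $\pi(v) = v_i = \pi(w_i)$, the vertices $v$ and $w_i$ lie in the same $G$-orbit, so there exists $g \in G$ with $g(w_i) = v$; this $g$ is by definition the sheet number of $v$. For uniqueness, suppose $v$ received two sheet numbers, say $g(w_i) = v = h(w_j)$ with $w_i, w_j \in S$. Applying $\pi$ gives $v_i = \pi(w_i) = \pi(v) = \pi(w_j) = v_j$, and since $S$ meets each orbit exactly once we get $w_i = w_j$. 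Then $g(w_i) = h(w_i)$, and because $G$ \emph{acts freely} on $V(H)$ (condition (a) in Definition \ref{definition: free action}), we conclude $g = h$. Thus each $v$ has a well-defined unique sheet number, and the sheets $\{g(S)\}_{g \in G}$ partition $V(H)$.

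**The main obstacle** is the uniqueness argument, and specifically the need to invoke freeness of the action rather than merely the covering property; the subtlety is that two different group elements could a priori send two different transversal vertices to the same $v$, so the proof genuinely depends on both (i) $S$ being a transversal (which comes from $\pi$ separating orbits, via Lemma \ref{lemma: unique path lift}) and (ii) the stabilizer of each vertex being trivial (Definition \ref{definition: free action}(2)(a)). Everything else is bookkeeping: the existence half is immediate from connectivity and the lifting lemma, while the well-definedness of $S$ as a transversal is the place to be careful. I would keep the write-up short, leaning on Lemmas \ref{lemma: unique path lift} and \ref{lemma: sheets lemma}(3) for the path-lifting and on Definition \ref{definition: free action} for freeness.
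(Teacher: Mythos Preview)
Your proposal is correct and follows essentially the same approach as the paper's proof: both establish existence by projecting $v$ down to $v_i = \pi(v)$ and using that $v$ and the terminal vertex $v_i'$ of the lifted path lie in the same $G$-orbit, and both establish uniqueness by first showing $v_i' = v_j'$ (via $\pi(v_i') = \pi(v) = \pi(v_j')$ and the uniqueness of lifts from Construction~\ref{construction: sheet partition}) and then invoking freeness of the $G$-action on $V(H)$ to conclude $g = h$. Your framing in terms of $S$ being a transversal for the $G$-action is a clean way to organize the same argument.
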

\begin{proof}
We have to prove that each vertex $v \in V(H)$ has a unique sheet number. For the uniqueness, suppose that $g(v_i')=v=h(v_j')$ for some $g,h \in G$. Then, we have
\[
v_i=\pi(g_i(v_i')) = \pi(v)=\pi(g_j(v_j')) = v_j,
\]
showing that $v_i'=v_j'$ since, by definition, they are the terminal vertices of the unique lifts of paths from $v_0$ to $v_i$ and $v_0$ to $v_j$. It follows that $h^{-1}g=\textrm{id}_G$ as $G$ acts freely on $V(H)$, showing the uniqueness.

For existence, we have that $[v] \in V(T)$, where $[v]$ is the $G$-orbit of $v$. Then, from Construction \ref{construction: sheet partition}, we have $\pi(v)=v_i=\pi(v_i')$ for some $i$. It follows that $v$ and $v_i'$ belong to the same $G$-orbit, i.e., there exists $g \in G$ such that $g(v_i')=v$. 
\end{proof}

\begin{myeg}\label{example: sheet partition}
Consider $H$ and $X=H/G$ in Example \ref{example: quotient used}. We have the following bipartite graph associated to $X$.
\[
B_X=\left(  
\begin{tikzcd}[every arrow/.append style={-}, row sep=0.2cm, column sep=1cm]
u_1=\{v_1,v_2\} \arrow[rrr] \arrow[rrrd]& & & f_1=\{e_3,e_4\} \\
u_2=\{v_3,v_4\} \arrow[rrru] \arrow[rrr] & & &   f_2=\{e_1,e_2\}\\
u_3=\{v_5,v_6\}  \arrow[rrru]  &  & &
\end{tikzcd} 
\right)
\]
Let $v_1 \in V_H$ be a fixed lift of $u_1$. For the spanning tree $T$, use the following spanning tree of $B_X$:
\[
T=\left(  
\begin{tikzcd}[every arrow/.append style={-}, row sep=0.2cm, column sep=1cm]
u_1 \arrow[rrr] \arrow[rrrd]& & & f_1 \\
u_2 \arrow[rrru]  & & &   f_2\\
u_3 \arrow[rrru]  &  & &
\end{tikzcd} 
\right)
\]
Clearly, $v_1$ has sheet number $\textrm{id}_G$. Now, the path from $u_1=\{v_1,v_2\}$ to $u_2=\{v_3,v_4\}$ contained in $T$ goes through $f_1=\{e_3,e_4\}$. The lift of this path starting at $v_1$ will end at $v_3$, so $v_3$ also has sheet number $\textrm{id}_G$. The path from $u_1=\{v_1,v_2\}$ to $u_3=\{v_5,v_6\}$ contained in $T$ goes through $f_2=\{e_1,e_2\}$. The lift of this path starting at $v_1$ ends at $v_6$, so $v_6$ has sheet number $\textrm{id}_G$. Because $G$ has order $2$, all of the other vertices of $H$ have sheet number $g$. See Figure \ref{figure1}. 

\begin{figure}[hbt!]
\[
\begin{tikzpicture}
    \node (v1) at (4,0) {};
    \node (v2) at (0,3) {};
    \node (v4) at (4,3) {};
    \node (v3) at (0,0) {};
    \node (v5) at (0.7,1.5) {};
    \node (v6) at (3.7,1.5) {};

 \begin{scope}[fill opacity=0.3]   
 \filldraw[fill=lime!70] ($(v3)+(-0.5,0.2)$)
        to[out=90,in=180] ($(v3)+(0,3.5)$)
        to[out=0,in=90] ($(v3)+(1,0.5)$)
        to[out=270,in=0] ($(v3)+(-0.1,-0.6)$)
        to[out=180,in=270] ($(v3)+(-0.5,0.2)$);
 \filldraw[fill=green!70] ($(v1)+(-0.5,0.2)$)
        to[out=90,in=180] ($(v1)+(0,3.5)$)
        to[out=0,in=90] ($(v1)+(1,0.5)$)
        to[out=270,in=0] ($(v1)+(-0.1,-0.6)$)
        to[out=180,in=270] ($(v1)+(-0.5,0.2)$);
  \filldraw[fill=red!30] ($(v2)+(-0.5,-0.2)$) 
        to[out=90,in=180] ($(v2) + (0.2,0.4)$) 
        to[out=0,in=180] ($(v4) + (0,0.3)$)
        to[out=0,in=90] ($(v4) + (0.3,-0.1)$)
        to[out=270,in=0] ($(v4) + (0,-0.8)$)
        to[out=180,in=0] ($(v4) + (-2,-0.8)$)
        to[out=180,in=270] ($(v2)+(-0.5,-0.2)$); 
  \filldraw[fill=yellow!40] ($(v3)+(-0.5,-0.2)$) 
        to[out=90,in=180] ($(v3) + (0.2,0.4)$) 
        to[out=0,in=180] ($(v1) + (0,0.3)$)
        to[out=0,in=90] ($(v1) + (1,-0.1)$)
        to[out=270,in=0] ($(v1) + (0,-0.8)$)
        to[out=180,in=0] ($(v1) + (-2,-0.8)$)
        to[out=180,in=270] ($(v3)+(-0.5,-0.2)$); 
    \end{scope}

   \fill (v1) circle (0.1) node [right] {$v_1$};
    \fill (v3) circle (0.1) node [right] {$v_3$};
    \fill (v6) circle (0.1) node [right] {$v_6$};

\node at (0.3,3) {$\textcolor{blue}{\bullet}~v_2$};
\node at (3.9,3) {$\textcolor{blue}{\bullet}~v_4$};
\node at (0.6,1.2) {$\textcolor{blue}{\bullet}~v_5$};

 \node at (-0.2,1.5) {$\textcolor{red}{e_1}$};
    \node at (4.7,1.5) {$\textcolor{red}{e_2}$};
    \node at (2,3) {$\textcolor{red}{e_3}$};
    \node at (2,0) {$\textcolor{red}{e_4}$};
\end{tikzpicture}
\]
\caption{Sheet $g$ is in blue.}  \label{figure1}
\end{figure}
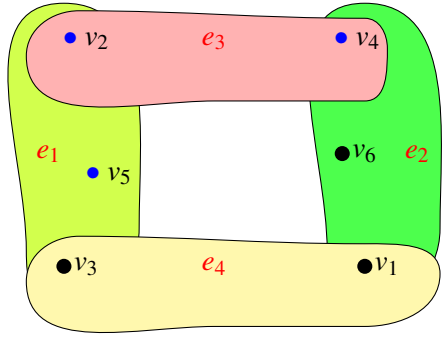
\end{myeg}

In the following we will denote a hypergraph by $Y$ and the quotient by $X=Y/G$.

\begin{mydef}\label{definiion: Frobenius}
Let $G$ be a group acting freely on a connected hypergraph $Y$. Fix $v_0$, $v_0'$, and $T$ as in Construction \ref{construction: sheet partition}. Let $C$ be a closed path of $X$. Let $P$ be the unique lifting of $C$ so that the starting vertex of $P$ has sheet number $\textrm{id}_G$. The \emph{Frobenius element} $F(C,Y/X)$ is the sheet number of the terminal vertex of $P$. 
\end{mydef}

\begin{rmk}\label{remark: Frobenius paths}
We remark the following observations. 
\begin{enumerate}
    \item 
Once one fixes $v_0$, $v_0'$, and $T$, then one can generalize the definition of a Frobenius element to all paths using the above definition by taking the sheet number of the terminal vertex of a unique lift.
\item 
For a closed path $C=(v_1,e_1,\dots,e_n,v_1)$ we can always find the unique lift $P'$ so that the starting vertex of $P'$ has sheet number $\textrm{id}_G$. To be precise, from (3) of Construction \ref{construction: sheet partition}, there is a unique path $P$ from $v_0$ to $v_1$ in $B_{H/G}$ which is contained in $T$. Then, by $(4)$ and $(5)$ of Construction \ref{construction: sheet partition}, we can find the unique lift $P'$ of $P$ whose initial vertex is $v_0'$. Moreover, by the definition of sheet $\textrm{id}_G$, the initial vertex of $P'$, which is a lifting of the vertex $v_1$ has a sheet number $\textrm{id}_G$.
\end{enumerate}
\end{rmk}

The following is a key result to define Frobenius elements in the setting of hypergraphs.

\begin{pro}\label{proposition: path sheet}
Let $G$ be a group acting freely on a connected hypergraph $Y$ and $X=Y/G$. Fix a spanning tree $T$ of $B_X$ and $v_0 \in V(X)$, $v_0'\in V(Y)$ as in Construction \ref{construction: sheet partition}. Two vertices $v_1,v_2\in V(Y)$ have the same sheet number if and only if there exists a path $P$ between them with $\pi(P)$ in $T$ (viewed in $B_{X}$).
\end{pro}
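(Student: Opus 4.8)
The plan is to first restate the definition of sheet number in a usable algebraic form and then prove the two implications separately, with the nontrivial content concentrated in the ``if'' direction. Throughout I write $\pi\colon Y\to X$ for the projection, recall from Proposition \ref{proposition: galois is cover} that it is a covering, and for $v_i\in V(X)$ let $v_i'\in V(Y)$ denote the identity-sheet representative over $v_i$, namely the terminal vertex of the lift (starting at $v_0'$) of the unique $T$-path $P_i$ from $v_0$ to $v_i$ in Construction \ref{construction: sheet partition}. Unwinding step $(5)$ of that construction together with the uniqueness of sheet numbers, a vertex $v$ lying over $v_i$ has sheet number $g$ if and only if $v=g(v_i')$; consequently the sheet of $g^{-1}(v)$ is obtained from that of $v$ by left multiplication by $g^{-1}$, so $g=\mathrm{sheet}(v)$ forces $g^{-1}(v)=v_i'\in S$, where $S$ denotes the identity sheet. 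I will also use repeatedly that $\pi\circ g=\pi$ for every $g\in G$, since $X=Y/G$.

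For the ``only if'' direction, suppose $v_1,v_2$ have the same sheet number $g$, and set $v_i=\pi(v_1)$, $v_j=\pi(v_2)$, so that $v_1=g(v_i')$ and $v_2=g(v_j')$. Let $\widetilde P_i$ (resp.\ $\widetilde P_j$) be the lift from $v_0'$ of the tree-path $P_i$ (resp.\ $P_j$); these end at $v_i'$ and $v_j'$ and both begin at the common vertex $v_0'$. Concatenating the reverse of $\widetilde P_i$ with $\widetilde P_j$ gives a path in $Y$ from $v_i'$ to $v_j'$ (concatenation is permitted since the definition of a path imposes no non-backtracking condition) whose image under $\pi$ is $P_i^{-1}\cdot P_j$, a walk lying entirely in $T$. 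Applying the automorphism $g$ to this path yields a path $P$ from $g(v_i')=v_1$ to $g(v_j')=v_2$, and since $\pi\circ g=\pi$ its image is unchanged and still lies in $T$. This produces the desired path.

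For the ``if'' direction, suppose $P$ is a path in $Y$ from $v_1$ to $v_2$ with $\pi(P)$ contained in $T$, and put $g=\mathrm{sheet}(v_1)$. Replacing $(v_1,v_2,P)$ by $(g^{-1}(v_1),g^{-1}(v_2),g^{-1}(P))$---which is legitimate because $g^{-1}$ is an automorphism fixing $\pi$, hence preserves the property that the image lies in $T$, and shifts both sheet numbers by $g^{-1}$---I may assume $\mathrm{sheet}(v_1)=\mathrm{id}_G$, i.e.\ $v_1=v_i'$ with $v_i=\pi(v_1)$. Let $v_j=\pi(v_2)$. Now concatenate the lift $\widetilde P_i$ (from $v_0'$, ending at $v_1=v_i'$) with $P$ to obtain a path $\widehat P$ in $Y$ from $v_0'$ to $v_2$; by Lemma \ref{lemma: unique path lift} it is the unique lift of $\pi(\widehat P)=P_i\cdot\pi(P)$ starting at $v_0'$. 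The walk $\pi(\widehat P)$ runs from $v_0$ to $v_j$ and lies in $T$, hence, $T$ being a tree, reduces to the unique geodesic $P_j$ from $v_0$ to $v_j$. The key point is that the terminal vertex of a lift is unchanged when a backtracking is removed: a back-and-forth along a single edge of $X$ (a subsequence of the form $(f,w,f)$ or $(w,f,w)$, which is exactly what a backtracking in $B_X$ amounts to) lifts, by the uniqueness in Lemmas \ref{lemma: gal lift} and \ref{lemma: unique path lift}, to an analogous back-and-forth in $Y$ that returns to its starting vertex. Iterating, the lift of $\pi(\widehat P)$ and the lift of $P_j$ from $v_0'$ share the same terminal vertex; the former is $v_2$ and the latter is by definition $v_j'$. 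Hence $v_2=v_j'\in S$, so $\mathrm{sheet}(v_2)=\mathrm{id}_G=\mathrm{sheet}(v_1)$, completing this direction.

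The main obstacle is precisely the invariance of the lift's endpoint under reduction of walks in the tree $T$: one must argue that although the image walk $\pi(\widehat P)$ may traverse edges of $T$ back and forth, its lift still terminates at the geodesic's lift endpoint $v_j'$. I would isolate this as a short auxiliary lemma (removing one backtracking preserves the terminal vertex of the lift), proved directly from the uniqueness of path- and edge-lifts, and then invoke that every walk between two fixed vertices of a tree reduces to the unique geodesic joining them. The remaining steps---the algebraic reformulation of sheet numbers and the behavior of sheets under the $G$-action---are routine given Construction \ref{construction: sheet partition} and the freeness hypothesis of Definition \ref{definition: free action}.
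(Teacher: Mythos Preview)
Your proof is correct, and the ``only if'' direction is essentially identical to the paper's: both build the path $(P_i')^{-1}P_j'$ between the identity-sheet representatives and then translate by $g$.

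For the ``if'' direction you take a genuinely different route. The paper argues by contradiction: assuming $v_1,v_2$ have sheet numbers $g_1\ne g_2$, it sets $\tilde v_2=g_2g_1^{-1}(v_1)$, uses the already-proved forward direction to get a $T$-path $P^*$ from $v_2$ to $\tilde v_2$, concatenates $PP^*$, removes backtrackings in $Y$, and observes that the image is a nontrivial backtrack-free closed walk in the tree $T$---a contradiction. Your argument is instead direct: after normalizing so that $v_1$ lies on the identity sheet, you concatenate with $\widetilde P_i$ and then reduce the image walk in $T$ to the geodesic $P_j$, checking via Lemmas \ref{lemma: gal lift} and \ref{lemma: unique path lift} that each backtracking removal in the base leaves the terminal vertex of the lift unchanged, whence $v_2=v_j'$.

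Both approaches ultimately hinge on the same consequence of freeness (Definition \ref{definition: free action}(2)(b)): a backtracking step in $X$ lifts to a genuine backtracking in $Y$. The paper uses this implicitly to conclude that removing backtrackings from $P^{**}$ in $Y$ yields a path whose image has no backtrackings; you use it explicitly as your auxiliary lemma. Your version has the minor advantage of not invoking the forward implication, so the two directions are logically independent; the paper's version is a bit shorter since the contradiction packaging avoids having to iterate the backtracking-removal step.
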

\begin{proof}
($\implies$) Suppose that $v_1$ and $v_2$ have sheet number $g$, i.e., there exist vertices $\tilde{v}_1$ and $\tilde{v}_2$ of $Y$ with sheet number $\textrm{id}_G$ so that $g(\tilde{v}_1)=v_1$ and $g(\tilde{v}_2)=v_2$. Since $\tilde{v}_1$ has the sheet number $\textrm{id}_G$, there exists $v_1^*\in V(X)$ with a unique path $P_1$ from $v_0$ to $v_1^*$ in $X$ such that the corresponding path in $B_{X}$ is contained in $T$, and there is a unique lift $P_1'$ of $P_1$ to $Y$ which has the terminal vertex  $\tilde{v}_1$. Likewise, for $\tilde{v}_2$, one has $v_2^*$, $P_2$, $P_2'$. Now, one can concatenate $P_1'$ and $P_2'$ to define a path between $v_1$ and $v_2$. To be precise, define a new path from $\tilde{v}_1$ to $\tilde{v}_2$:
\[
P':=(P_1')^{-1}P_2',
\]
which we read left-to-right in the order of traversal and $(P_1')^{-1}$ means that we walk ``backward''. To sum up, we have the following picture:
\begin{equation} \label{eq: path}
\begin{tikzcd}
   v_2 & \tilde{v}_2  \arrow[dashed,swap]{d}{\pi}\arrow[dashed,swap]{l}{g}  &   v_0'  \arrow[dashed,swap]{d}{\pi} \arrow[dashed]{r}{P_1'}  \arrow[dashed,swap]{l}{P_2'}    &\tilde{v}_1 	 \arrow[dashed,swap]{d}{\pi} 
 \arrow[bend right=45,dashed,swap]{ll}{P'}  \arrow[dashed]{r}{g}& v_1\\
    & v_2^*& v_0   \arrow[dashed,swap]{l}{P_2}   \arrow[dashed]{r}{P_1}& v_1^* &
\end{tikzcd}
\end{equation}
Then, $P=g(P')$ becomes a path from $v_1$ to $v_2$. Moreover, we can see that
\[
\pi(P)=\pi(g(P'))=\pi(P'),
\]
and hence $\pi(P)$ is in $T$ (viewed in $B_{X}$).

($\impliedby$) Let $v_1$ and $v_2$ be connected by a path $P$ so that $\pi(P)$ is in $T$ (viewed in $B_{X}$). Suppose that $v_1$ and $v_2$ have sheet numbers $g_1$ and $g_2$ with $g_1\neq g_2$. Let $\tilde{v}_2$ be $g_2g_1^{-1}(v_1)$. The sheet number of $\tilde{v}_2$ is $g_2$. Since $v_2$ and $\tilde{v}_2$ have the same sheet number, similar to the argument above as summarized in \eqref{eq: path}, there exists a path $P^*$, running from $v_2$ to $\tilde{v}_2$ with $\pi(P^*)\subseteq T$. Now, consider $P^{**}=PP^{*}$. Remove any backtrackings from $P^{**}$. Then, $\pi(P^{**})$ is a cycle of non-zero length, since $\pi(\tilde{v}_2)=\pi(g_2g_1^{-1}(v_1))=\pi(v_1)$, and $\tilde{v}_2\neq v_1$. The fact that $\pi(P^{**})\subseteq T$ is a contradiction.
\end{proof}

\begin{rmk}\label{remark: independent of lift}
 Let $G$ be a group acting freely on a connected hypergraph $Y$. Let $X=Y/G$. Fix a spanning tree $T$ of $B_{X}$. Let $P$ be a path in $X$, which is in $T$ (viewed in $B_{X}$). If $P'$ and $P''$ are two lifts of $P$ whose initial vertices have sheet number $\textrm{id}_G$, then the terminal vertices of $P'$ and $P''$ have the same sheet number. 
\end{rmk}

\begin{lem}\label{lemma: vertices choices}
With the same notation as above and a fixed spanning tree $T \subseteq B_{X}$, the Frobenius element $F(C,Y/X)$ depends on a choice of $v_0$ and $v_0'$ only up to conjugacy.
\end{lem}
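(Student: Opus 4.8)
The plan is to show that changing the base data $(v_0, v_0')$ to another pair $(w_0, w_0')$ produces a new sheet-numbering that differs from the original by conjugation by a single fixed group element $\sigma \in G$, and then to deduce that the Frobenius element of any closed path $C$ is replaced by $\sigma F(C,Y/X)\sigma^{-1}$. First I would fix the spanning tree $T \subseteq B_X$ once and for all (the lemma only asserts independence of $v_0, v_0'$ for fixed $T$), and consider two constructions of sheet numbers: one using $(v_0, v_0')$ giving a sheet function $s(-)$, and one using $(w_0, w_0')$ giving a sheet function $s'(-)$. My goal is to prove there is a single $\sigma \in G$ with $s'(v) = \sigma \, s(v)$ for all $v \in V(Y)$, or something of this multiplicative form that yields conjugacy on Frobenius elements.

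The key tool will be Proposition \ref{proposition: path sheet}: two vertices of $Y$ have the same sheet number if and only if they are joined by a path projecting into $T$. Crucially, this characterization of ``same sheet'' is intrinsic to $T$ and does not reference the basepoint, so the partition of $V(Y)$ into sheets (as an unordered partition) is the same for both constructions; only the \emph{labeling} of the sheets by group elements changes. I would argue as follows. The identity sheet for $(v_0,v_0')$ is the set of terminal vertices of lifts of $T$-paths out of $v_0$ starting at $v_0'$; by Proposition \ref{proposition: path sheet} this is precisely the $T$-path-connected component of $v_0'$. Similarly the identity sheet for $(w_0,w_0')$ is the $T$-path-connected component of $w_0'$. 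Since $G$ acts freely and transitively on the collection of these components (they are the sheets, permuted simply transitively by $G$), there is a unique $\sigma \in G$ carrying the identity sheet of the first construction onto the identity sheet of the second, i.e. $\sigma$ takes the component of $v_0'$ to the component of $w_0'$. I would then check that for every $v \in V(Y)$, if $v$ has sheet number $g$ under $s$ (so $v = g(\tilde v)$ with $\tilde v$ in the $s$-identity sheet), then applying $\sigma$ to the identity sheet and re-expressing gives $s'(v) = \sigma g \sigma^{-1}$; the freeness of the action (Definition \ref{definition: free action}) guarantees these group elements are well defined and unique.

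Finally I would feed this relation into the definition of the Frobenius element. For a closed path $C$ in $X$, its Frobenius element is the sheet number of the terminal vertex of the unique lift $P$ whose initial vertex lies in the identity sheet. Under the first construction this terminal vertex has sheet number $F(C,Y/X) = g_C$; I would show that the corresponding lift for the second construction can be obtained by applying $\sigma$ to $P$ (since $\sigma$ maps the $s$-identity sheet to the $s'$-identity sheet and commutes with $\pi$, it sends a valid lift-from-identity to a valid lift-from-identity), so the new Frobenius element is $s'(\sigma(\text{terminal of }P)) = \sigma g_C \sigma^{-1}$. Hence $F(C,Y/X)$ is well defined up to conjugacy, as claimed.

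The main obstacle I anticipate is the bookkeeping in the middle paragraph: making precise the claim that $G$ permutes the $T$-path-components of $V(Y)$ simply transitively, and that this is exactly the sheet structure. This requires carefully combining Proposition \ref{proposition: path sheet} (same sheet $\iff$ joined by a $T$-projecting path), the freeness conditions (a) and (b) of Definition \ref{definition: free action}, and the fact that $\pi$ restricted to each sheet is a bijection onto $V(X)$. Once the simple transitivity is established, the conjugation formula and its consequence for Frobenius elements follow formally, so I would invest the bulk of the writing there and keep the rest brief.
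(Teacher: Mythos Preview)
Your overall strategy is sound and close in spirit to the paper's, but packaged differently. The paper proceeds in two steps: first it fixes $v_0$ and replaces $v_0'$ by $v_0'' = h(v_0')$, transports the lift $C'$ to $C'' = h(C')$, and computes directly that the terminal sheet number becomes $hgh^{-1}$; then it argues separately that changing $v_0$ (with a compatible choice of lift) leaves the sheet partition itself unchanged, so the Frobenius element is literally the same. You collapse both steps into one by invoking Proposition~\ref{proposition: path sheet} to see that the \emph{unlabelled} sheet partition depends only on $T$, and then compare the two labelings via a single $\sigma \in G$ carrying one identity sheet to the other. That is a legitimate and somewhat cleaner reorganization of the same idea; the simple transitivity of $G$ on sheets that you flag as the main point to verify follows from freeness together with the fact (built into Construction~\ref{construction: sheet partition}) that the identity sheet meets each fibre of $\pi$ exactly once.

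There is one bookkeeping slip to repair. The formula $s'(v) = \sigma\, s(v)\, \sigma^{-1}$ is not correct for a \emph{fixed} vertex $v$: if $S_{\mathrm{id}}$ is the $s$-identity sheet and $S'_{\mathrm{id}} = \sigma(S_{\mathrm{id}})$, then writing $v = g(\tilde v)$ with $\tilde v \in S_{\mathrm{id}}$ gives $v = (g\sigma^{-1})(\sigma\tilde v)$ with $\sigma\tilde v \in S'_{\mathrm{id}}$, so in fact $s'(v) = s(v)\,\sigma^{-1}$. What you actually use in the last paragraph, and what \emph{is} correct, is the identity $s'(\sigma v) = \sigma\, s(v)\, \sigma^{-1}$: applying $\sigma$ to the lift $P$ puts its initial vertex in $S'_{\mathrm{id}}$, and then $s'\bigl(\sigma(t(P))\bigr) = \sigma\, g_C\, \sigma^{-1}$ as you claim. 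Once that one line is corrected, your argument goes through.
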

\begin{proof}
We first fix $v_0$ and consider $v_0'$ and $v_0''$ such that $\pi(v_0')=\pi(v_0'')=v_0$. Since $v_0'$ and $v_0''$ are in the same $G$-orbit, $\exists~ h\in G$ so that $h(v_0')=v_0''$. Let $C=(v,e_1,v_2,\dots,e_n,v)$ be a closed path in $X=Y/G$, so that $F(C,Y/X)=g$ with respect to $v_0'$. This means if we lift $C$ to $C'$ in $Y$ so that the initial vertex of $C'$ has sheet number $\textrm{id}_G$ with respect to $v_0'$, then the terminal vertex of $C'$ will have sheet number $g$ with respect to $v_0'$.

Let $C'':=h(C')$ so that $\pi(C'')=C$. Let $i_{C''}$ (resp.~$t_{C''}$) be the initial (resp.~terminal) vertex of $C''$. Then, with respect to $v_0'$, the vertex $i_{C''}$ has sheet number $h$ and the vertex $t_{C''}$ has sheet number $hg$. Note that, with respect to $v_0'$, $v_0''$ has sheet number $h$. Since $v_0''$ and $i_{C''}$ have the same sheet number with respect to $v_0'$, and as the spanning tree $T$ is fixed, the vertex $i_{C''}$ has sheet number $\textrm{id}_G$ with respect to $v_0''$ by Proposition \ref{proposition: path sheet}. Therefore $C''$ is a unique lift of $C$ with respect to $v_0''$ as in Definition \ref{definiion: Frobenius}.

Since $t_{C''}$ has sheet number $hg$ with respect to $v_0'$, we have that $t_{C''}=hg(v_i')$ for some vertex $v_i'$ whose sheet number, with respect to $v_0'$, is $\textrm{id}_G$. Notice that $h(v_i')$ has sheet number $\textrm{id}_G$ with respect to $v_0''$, i.e., $h(v_i')=v_i''$ for some $v_i''$ whose sheet number, with respect to $v_0''$, is $\textrm{id}_G$. So, we have
\[
t_{C''} = hg(v_i')=hgh^{-1}(v_i''),
\]
showing that $t_{C''}$ has sheet number $hgh^{-1}$ with respect to $v_0''$. 

Next, we consider two choices of $v_0$, namely, $v_{0}^*$ and $v_{0}^{**}$. We can arbitrarily choose $(v_{0}^{*})', (v_{0}^{**})'\in V(Y)$ with respect to $v_{0}^*$ and $v_{0}^{**}$, so that $(v_{0}^{**})'$ has sheet number $\textrm{id}_G$ with respect to the choice of $v_{0}^*$ and $(v_{0}^{*})'$. It follows from Proposition \ref{proposition: path sheet} that there exists a path $P$ from $(v_{0}^{*})'$ to $(v_{0}^{**})'$, such that $\pi(P)$ is in $T$. Now, if there is a path $P^*$ from $v\in V(Y)$ to $(v_{0}^{*})'$ with $\pi(P^*)\subseteq T$, we can draw a similar path from $v$ to $(v_{0}^{**})'$ by concatenating paths $P$ and $P^*$, and removing backtrackings as we did in the proof of Proposition \ref{proposition: path sheet}. Hence, both $v_0^*$ and $v_0^{**}$ produce the same sheet $\textrm{id}_G$, and the same partition of the vertices of $Y$. Therefore, choice of $v_0$ does not change $F(C,Y/X)$.
\end{proof}

\begin{lem}\label{lemma: independent of tree}
With the same notation as above, the Frobenius element $F(C,Y/X)$ depends on a choice of a spanning tree $T \subseteq B_{Y/G}$ only up to conjugacy.
\end{lem}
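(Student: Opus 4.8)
The plan is to show that if we replace a spanning tree $T$ of $B_X$ by another spanning tree $T^*$, then the resulting Frobenius element changes only by conjugation. The overall strategy is to reduce the statement about Frobenius elements (which are sheet numbers of terminal vertices of lifted cycles) to a statement about how the \emph{sheet partition} of $V(Y)$ depends on the chosen spanning tree. Concretely, I would fix $v_0\in V(X)$ and $v_0'\in V(Y)$ with $\pi(v_0')=v_0$ once and for all (this is harmless by Lemma \ref{lemma: vertices choices}, which already handles the dependence on $v_0,v_0'$ up to conjugacy), and then compare the two sheet structures arising from $T$ and from $T^*$.

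\begin{proof}
Fix $v_0\in V(X)$ and $v_0'\in V(Y)$ with $\pi(v_0')=v_0$, and let $T,T^*$ be two spanning trees of $B_X$. Denote by $\textrm{sh}_T$ and $\textrm{sh}_{T^*}$ the sheet-number functions on $V(Y)$ determined by Construction \ref{construction: sheet partition} using $T$ and $T^*$, respectively. Both assign sheet number $\textrm{id}_G$ to $v_0'$.

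The key point is that the identity sheet for $T^*$ is obtained from the identity sheet for $T$ by applying a single group element. Let $S$ (resp.~$S^*$) be the set of vertices of $Y$ with sheet number $\textrm{id}_G$ with respect to $T$ (resp.~$T^*$). I claim there exists $h\in G$ such that $S^*=h(S)$. To see this, take any $w\in S^*$. Since both $S$ and $S^*$ contain exactly one lift of each vertex of $X$ (each sheet is a fundamental domain for the $G$-action, as the sheet numbers partition $V(Y)$), there is a unique $w_0\in S$ with $\pi(w_0)=\pi(w)$, and hence a unique $h_w\in G$ with $w=h_w(w_0)$. The crucial step is to show $h_w$ is independent of $w$. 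For this I use Proposition \ref{proposition: path sheet}: if $w,w'\in S^*$, then there is a path $P$ in $Y$ between them with $\pi(P)$ contained in $T^*$; projecting and re-lifting, and using that $w_0,w_0'\in S$ are connected by a path whose projection lies in $T$, one compares the two group elements $h_w,h_{w'}$ by concatenating lifts. This is the main obstacle, because the two trees $T,T^*$ are genuinely different, so the path witnessing equal sheet numbers for $T^*$ need not project into $T$; the resolution is that the group element relating $S$ and $S^*$ is forced to be globally constant precisely because $S$ is a full set of orbit representatives and $G$ acts freely, so any two choices of $h_w$ give two lifts of $v_0$ in the single sheet $S$, which must coincide.

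Granting $S^*=h(S)$, the sheet numbers transform by conjugation: a vertex $v\in V(Y)$ with $\textrm{sh}_T(v)=g$ satisfies $v=g(s)$ for a unique $s\in S$, hence $v=g h^{-1}(h(s))$ with $h(s)\in S^*$, which gives $\textrm{sh}_{T^*}(v)=gh^{-1}$ after re-expressing relative to the new identity sheet; a careful bookkeeping (using freeness of the action) yields $\textrm{sh}_{T^*}(v)=h\,\textrm{sh}_T(v)\,h^{-1}$. Finally, for a prime cycle $C$ of $X$, the Frobenius element $F(C,Y/X)$ is by Definition \ref{definiion: Frobenius} the sheet number of the terminal vertex of the lift of $C$ starting on the identity sheet. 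Since the two identity sheets differ by $h$ and sheet numbers differ by conjugation by $h$, the Frobenius element computed with $T^*$ equals $h\,F(C,Y/X)\,h^{-1}$ with $F(C,Y/X)$ computed with $T$. This is exactly the claimed dependence up to conjugacy.
\end{proof}
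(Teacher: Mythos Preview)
Your argument has a genuine gap: the claim that there exists a single $h\in G$ with $S^*=h(S)$ is false. Indeed, by your own setup both $S$ and $S^*$ contain $v_0'$ (since Construction \ref{construction: sheet partition} always gives $v_0'$ sheet number $\textrm{id}_G$). If $S^*=h(S)$ then $h(v_0')\in S^*$ lies over $v_0$, but $S^*$ contains exactly one lift of $v_0$, namely $v_0'$; freeness then forces $h=\textrm{id}_G$ and hence $S=S^*$. But different spanning trees genuinely give different identity sheets. For a concrete picture, take a degree-$2$ cover of a graph with two vertices joined by two edges $a,b$: the spanning trees $\{a\}$ and $\{b\}$ give identity sheets $\{v_0',v_1'\}$ and $\{v_0',v_1''\}$ which both contain $v_0'$ and are not $G$-translates of one another. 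Your handwave that ``any two choices of $h_w$ give two lifts of $v_0$ in the single sheet $S$, which must coincide'' is precisely where this breaks: it shows only that $h_{v_0}=\textrm{id}_G$, not that all $h_w$ agree.

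The paper sidesteps this entirely by a much simpler trick. Since Lemma \ref{lemma: vertices choices} already shows that changing $v_0,v_0'$ affects $F(C,Y/X)$ only by conjugation, one is free to choose $v_0$ to be the \emph{initial vertex of $C$} itself. With that choice, $v_0'$ has sheet number $\textrm{id}_G$ for \emph{any} tree, and the lift of $C$ beginning on the identity sheet is just the lift beginning at $v_0'$; its terminal vertex lies in the fiber over $v_0$, hence equals $g(v_0')$ for a well-defined $g\in G$ that does not involve the tree at all. So with this choice of base point the Frobenius is literally independent of $T$, and combined with Lemma \ref{lemma: vertices choices} you get the conjugacy statement in general.
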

\begin{proof}
Let $C=(v,e_1,\dots,e_n,v)$ be a closed path in $Y/G$. From Lemma \ref{lemma: vertices choices}, we know that choices of $v_0$ and $v_0'$ change $F(C,Y/X)$ only up to conjugacy. For convenience, let $v_0=v$, the initial (and terminal) vertex of $C$. Pick any $v_0'$ with $\pi(v_0')=v_0$. For any choice of a spanning tree $T$, $v_0'$ will have sheet number $\textrm{id}_G$. Furthermore, all $v\in V(Y)$ with $v=g(v_0')$ for some $g$ will have sheet number $g$. Now, we lift $C$ to $Y$ so that the initial vertex of $C$ goes to $v_0'$. Because $C$ is a cycle, the terminal vertex of $C$ will be lifted to $v^*$ so that $\pi(v^*)=v_0$. This means that $v^*=g(v_0')$ for some $g$. Regardless of our choice of $T$, $F(C,Y/X)=g$.
\end{proof}

Let $C \subseteq Y/G$ be a closed path, say $(v_1,e_1,v_2,e_2,\dots,e_n,v_1)$. We may change the starting vertex which produces the same underlying closed path. For instance, $(v_2,e_2,v_3,e_3,\dots,e_n,v_1,e_1,v_2)$ if we change our starting vertex to $v_2$. The following lemma says that this only changes the Frobenius element $F(C,Y/X)$ up to conjugacy. 

First recall that as noted in Remark \ref{remark: Frobenius paths}, one may consider the Frobenius element $F(P,Y/X)$ of a path $P$ in $Y/G$. 
The following lemma shows that taking Frobenius elements is multiplicative with respect to concatenation of paths. We let $i(P)$ be the initial vertex and $t(P)$ be the terminal vertex of a path $P$.

\begin{lem}\label{lemma: path homomorphism}
Consider paths $P_1,P_2$ of $Y/G$ such that the terminal vertex of $P_1$ is the initial vertex of $P_2$. Let $P:=P_1P_2$. Then, one has the following:
\begin{equation}
F(P,Y/X)=F(P_1,Y/X)F(P_2,Y/X).
\end{equation}
\end{lem}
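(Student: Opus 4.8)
The plan is to prove multiplicativity by tracking sheet numbers along the lifts of $P_1$ and $P_2$ and invoking the uniqueness of path lifts from Lemma \ref{lemma: unique path lift}. First I would set up notation: let $g_1 = F(P_1,Y/X)$ and $g_2 = F(P_2,Y/X)$. By Definition \ref{definiion: Frobenius} (extended to arbitrary paths as in Remark \ref{remark: Frobenius paths}), lift $P_1$ to the unique path $P_1'$ in $Y$ whose initial vertex has sheet number $\textrm{id}_G$; then the terminal vertex $t(P_1')$ has sheet number $g_1$. Separately, lift $P_2$ to the unique path $P_2'$ whose initial vertex has sheet number $\textrm{id}_G$, so that its terminal vertex has sheet number $g_2$.

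The key step is to understand the lift of the concatenation $P = P_1 P_2$ starting from sheet $\textrm{id}_G$. The initial segment of this lift agrees with $P_1'$ (by uniqueness of lifts), so after traversing $P_1$ we arrive at $t(P_1')$, which has sheet number $g_1$. Now the remaining portion is a lift of $P_2$ starting not at sheet $\textrm{id}_G$ but at sheet $g_1$. The crucial observation is that translating by $g_1$ is a $G$-equivariant automorphism of $Y$ commuting with $\pi$: since $g_2$ is realized by the lift of $P_2$ starting at sheet $\textrm{id}_G$, applying $g_1$ to that lift produces a lift of $P_2$ (because $\pi \circ g_1 = \pi$) whose initial vertex now has sheet number $g_1$ and whose terminal vertex has sheet number $g_1 g_2$. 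I would argue that this translated lift is precisely the continuation of the lift of $P$, again by the uniqueness in Lemma \ref{lemma: unique path lift}: both are lifts of $P_2$ sharing the same initial vertex $t(P_1')$. Hence the terminal vertex of the full lift $P'$ of $P$ has sheet number $g_1 g_2$, giving $F(P,Y/X) = g_1 g_2$ as desired.

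The main technical point to nail down is the compatibility of the sheet-number assignment with the $G$-action, namely that if a vertex $v$ has sheet number $g$, then $g'(v)$ has sheet number $g' g$. This follows directly from Construction \ref{construction: sheet partition}: the sheet-$\textrm{id}_G$ vertices form a fixed transversal $S$, and the sheet number of $w$ is the unique $h$ with $w \in h(S)$; applying $g'$ sends $h(S)$ to $(g'h)(S)$, so sheet numbers left-multiply by $g'$. Using this, the vertex $t(P_1')$ on sheet $g_1$ can be written as $g_1(\tilde v)$ for the unique sheet-$\textrm{id}_G$ vertex $\tilde v$ sharing its orbit, and the lift of $P_2$ from $t(P_1')$ is exactly $g_1$ applied to the lift of $P_2$ from $\tilde v$.

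I expect the main obstacle to be handling the seam at the junction vertex carefully: one must confirm that the unique lift of the concatenated path $P$ genuinely decomposes as $P_1'$ followed by $g_1(P_2')$ with no mismatch at $t(P_1')$, and that the sheet-$\textrm{id}_G$ starting vertex of $P_2'$ maps under $g_1$ exactly to $t(P_1')$. This hinges on the fact that $t(P_1')$ and the sheet-$\textrm{id}_G$ lift of $i(P_2)$ lie in the same $G$-orbit (both project to the junction vertex of $X$), so the required $g_1$ is forced and matches the sheet number computed for $P_1$. Once the equivariance of sheet numbers and the uniqueness of lifts are in hand, the identity $F(P,Y/X) = F(P_1,Y/X)F(P_2,Y/X)$ falls out immediately.
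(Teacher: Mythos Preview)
Your proposal is correct and follows essentially the same route as the paper's proof: lift $P_1$ to $P_1'$ starting on sheet $\textrm{id}_G$, write $t(P_1')=g_1(v)$ for the sheet-$\textrm{id}_G$ lift $v$ of the junction vertex, take $P_2'$ to be the lift of $P_2$ starting at $v$, and observe that $P_1'\,g_1(P_2')$ is the required lift of $P$ with terminal sheet number $g_1g_2$. Your version spells out more explicitly the equivariance of sheet numbers under left multiplication by $G$, which the paper uses without comment, but the argument is the same.
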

\begin{proof}
Let $F(P_1,Y/X)=g_1$ and $F(P_2,Y/X)=g_2$. We first lift $P_1$ to $P_1'$ so that the initial vertex $i(P_1')$ has sheet number $\textrm{id}_G$. By definition, the vertex $t(P_1')$ has sheet number $g_1$. In other words, $t(P_1')=g_1(v)$ for some vertex $v$ with the sheet number $\textrm{id}_G$. Let $v^*=t(P_1)=i(P_2)$. Then, we have
\[
v^*=\pi (t(P_1'))=\pi (i(P_2')) = \pi (g_1(v)) = \pi(v).
\]
In particular, we can lift $P_2$ to $P_2'$ so that $i(P_2')=v$. Now, the terminal vertex of the path $g_1(P_2')$ has sheet number $g_1g_2$. We can conclude the proof by noticing that the path
$\tilde{P}:=P_1'g_1(P_2')$ is a lift of $P_1P_2$ whose initial vertex has sheet number $\textrm{id}_G$. 
\end{proof}

\begin{cor}\label{corollary: cycle reps class}
With the same notation as above, the Frobenius element $F(C,Y/X)$ depends on a choice of a representative of $C$ (with the equivalence relation in Definition \ref{definition: equivalence class of cycles}), but only up to conjugacy.   
\end{cor}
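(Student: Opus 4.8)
The plan is to reduce the statement to the multiplicativity of Frobenius elements under concatenation, which is exactly Lemma \ref{lemma: path homomorphism}. Recall from Definition \ref{definition: equivalence class of cycles} that two representatives of $[C]$ differ by a cyclic rotation of the starting vertex. So it suffices to show that if $C=(v_1,e_1,v_2,\dots,e_n,v_1)$ and $Q=(v_i,e_i,\dots,e_n,v_1,e_1,\dots,e_{i-1},v_i)$ for some $i$, then $F(Q,Y/X)$ is conjugate to $F(C,Y/X)$.

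First I would split the cycle $C$ at the vertex $v_i$ into two consecutive paths. Write $C = P_1 P_2$ where $P_1=(v_1,e_1,\dots,e_{i-1},v_i)$ runs from $v_1$ to $v_i$ and $P_2=(v_i,e_i,\dots,e_n,v_1)$ runs from $v_i$ back to $v_1$. The terminal vertex of $P_1$ is the initial vertex of $P_2$, so Lemma \ref{lemma: path homomorphism} applies and gives
\[
F(C,Y/X)=F(P_1,Y/X)F(P_2,Y/X).
\]
The rotated representative $Q$ is nothing but the concatenation in the reverse order, $Q=P_2 P_1$ (the terminal vertex of $P_2$ is $v_1$, which is the initial vertex of $P_1$, so this concatenation is legitimate and is indeed a closed path based at $v_i$). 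Applying Lemma \ref{lemma: path homomorphism} once more yields
\[
F(Q,Y/X)=F(P_2,Y/X)F(P_1,Y/X).
\]

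Setting $a=F(P_1,Y/X)$ and $b=F(P_2,Y/X)$, I then observe that $F(C,Y/X)=ab$ and $F(Q,Y/X)=ba$, and $ba=a^{-1}(ab)a$, so the two Frobenius elements are conjugate in $G$. Since an arbitrary representative of $[C]$ is obtained from $C$ by a single cyclic rotation of this form (and the general case follows by composing such rotations, each of which preserves the conjugacy class), this proves that $F(C,Y/X)$ is well-defined on $[C]$ up to conjugacy. The only point that needs care, and which I regard as the main obstacle, is to justify applying Lemma \ref{lemma: path homomorphism} to $P_2 P_1$: one must check that the Frobenius element of each of the two individual \emph{paths} $P_1,P_2$ is well-defined in the sense of Remark \ref{remark: Frobenius paths}(1) (i.e.\ that the lift with initial vertex on sheet $\textrm{id}_G$ exists, which holds since every vertex of $X$ lifts to the identity sheet), and that the decomposition $C=P_1P_2$ versus $Q=P_2P_1$ uses the \emph{same} underlying sub-paths so that the elements $a$ and $b$ are literally the same in both products. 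Once this bookkeeping is in place, the conjugacy $ba=a^{-1}(ab)a$ finishes the argument immediately.
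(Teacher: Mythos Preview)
Your proof is correct and follows essentially the same approach as the paper: both arguments split the cycle at the new base-point and invoke Lemma~\ref{lemma: path homomorphism} to reduce conjugacy to a statement about products in $G$. The only cosmetic difference is that the paper writes the relation as $F(C_1,Y/X)=F(P,Y/X)\,F(C_2,Y/X)\,F(P,Y/X)^{-1}$ using the three-factor decomposition $C_1=P\,C_2\,P^{-1}$, whereas you phrase the same identity as $ab$ versus $ba$ via $C=P_1P_2$ and $Q=P_2P_1$; these are equivalent since $P$ in the paper is your $P_1$.
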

\begin{proof}
 Consider two representatives of a cycle $C$, which we will call $C_1$ and $C_2$. Let $P$ be a path from the initial vertex of $C_1$ to the initial vertex of $C_2$ that is also contained within $C$. It follows from Lemma \ref{lemma: path homomorphism} that 
 \[
F(C_1,Y/X)=F(P,Y/X)F(C_2,Y/X)F(P^{-1},Y/X)=F(P,Y/X)F(C_2,Y/X)F(P,Y/X)^{-1}. 
 \]
\end{proof}

\begin{mydef}\label{definition: Galois group}
Let $G$ be a group. By a \emph{free Galois covering} of hypergraphs with the Galois group $G$, we mean a morphism of hypergraphs $\varphi:Y \to X$ satisfying the following conditions:
\begin{enumerate}
    \item 
$G$ acts freely on $Y$,
    \item 
$X$ is isomorphic to $Y/G$, under which $\varphi$ is the projection map, and  
\item 
$G$ is isomorphic to $\{\sigma \in \Aut(Y) \mid \pi\circ \sigma = \pi\}$.
\end{enumerate}
\end{mydef}

\begin{rmk}\label{remark: inclusion remark}
In Definition \ref{definition: Galois group}, without the condition (3), one always has the following inclusion:
\[
G \hookrightarrow \{\sigma \in \Aut(Y) \mid \pi=\pi\circ \sigma\}.
\]
In fact, since $G$ acts on $Y$, we have a group homomorphism $f:G \to \Aut(Y)$. Since $G$ acts freely, $f$ is necessarily an injection. Since $\varphi=\pi:Y \to X=Y/G$, clearly $f(G) \subseteq \{\sigma \in \Aut(Y) \mid \pi=\pi\circ \sigma\}$. 
\end{rmk}

Now, we define the Artin-Ihara $L$-function of a free Galois covering of hypergraph. 

\begin{mydef}\label{definition: L-function}
Let $Y$ be a connected hypergraph and $\pi:Y\to X$ be a free Galois covering of hypergraphs. Let $G$ be the Galois group of $\pi$ and let $\rho$ be a representation of $G$. The \emph{Artin-Ihara} $L$-function of $\pi$ is defined as follows:
\[
L(u,\rho,Y/X):=\prod_{[C]} \det (1-\rho(F(C,Y/X))u^{\ell(C)})^{-1},
\]
where the product runs for all equivalence classes $[C]$ of prime cycles $C$ of $X$ (as in Definition \ref{definition: equivalence class of cycles}) and for each equivalence class $[C]$ we pick an arbitrary representative $C$. 
\end{mydef}

From Lemma \ref{lemma: vertices choices}, Lemma \ref{lemma: independent of tree}, and Corollary \ref{corollary: cycle reps class}, the Artin-Ihara $L$-function does not depend on any choices of $v_0,v_0',T$ in Construction \ref{construction: sheet partition} or a choice of a representative $C$ of $[C]$: they only change Frobenius elements up to conjugacy, and hence the determinant is well-defined. In other words, $L(u,\rho,Y/X)$ only depends on a representation $\rho$ and $\pi:Y  \to X$.

\section{Properties of Artin-Ihara L-functions of hypergraphs} \label{section: properties}

In this section, we explore properties of Artin-Ihara L-functions of hypergraphs. The following propositions are analogous to the graph case: See Proposition 18.10 in \cite{terras2010zeta} or the formulas at the end of Section 2.3 in \cite{zakharov2021zeta}.

\begin{pro}\label{proposition: L-function direct sum}
Let $Y$ be a connected hypergraph and $\pi:Y\to X$ be a free Galois covering of hypergraphs. Let $G$ be the Galois group of $\pi$ and $\rho_1$, $\rho_2$ be representations of $G$, then one has the following:
\[
L(u,\rho_1 \oplus \rho_2, Y/X) = L(u,\rho_1, Y/X) L(u, \rho_2, Y/X)
\]
\end{pro}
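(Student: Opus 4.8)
The plan is to exploit the product structure of the $L$-function in Definition \ref{definition: L-function} and reduce the identity to a single factor indexed by a prime cycle class. I would fix, once and for all, a spanning tree $T$ of $B_X$ together with base vertices $v_0, v_0'$ as in Construction \ref{construction: sheet partition}, and for each equivalence class $[C]$ of prime cycles of $X$ I would choose the \emph{same} representative $C$ in all three $L$-functions. Writing $g=F(C,Y/X) \in G$ for the corresponding Frobenius element, it then suffices to prove the factor-wise identity
\[
\det\bigl(1-(\rho_1\oplus\rho_2)(g)\,u^{\ell(C)}\bigr) = \det\bigl(1-\rho_1(g)\,u^{\ell(C)}\bigr)\cdot \det\bigl(1-\rho_2(g)\,u^{\ell(C)}\bigr),
\]
since taking reciprocals and multiplying over all classes $[C]$ then yields the claim.

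The key observation is that the direct sum representation is block diagonal: by definition $(\rho_1\oplus\rho_2)(g)$ is the matrix $\rho_1(g)\oplus\rho_2(g)$ carrying $\rho_1(g)$ and $\rho_2(g)$ on its diagonal blocks and zeros off-diagonal. Consequently $1-(\rho_1\oplus\rho_2)(g)u^{\ell(C)}$ is again block diagonal, with diagonal blocks $1-\rho_1(g)u^{\ell(C)}$ and $1-\rho_2(g)u^{\ell(C)}$, and the factor-wise identity above is immediate from the multiplicativity of the determinant on block-diagonal matrices. Substituting this into the product over $[C]$ and separating the two families of factors gives $L(u,\rho_1\oplus\rho_2,Y/X)=L(u,\rho_1,Y/X)\,L(u,\rho_2,Y/X)$.

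There is essentially no serious obstacle here; the only point I would address explicitly is that the Frobenius element $F(C,Y/X)$ is well-defined only up to conjugacy (Lemmas \ref{lemma: vertices choices} and \ref{lemma: independent of tree}, and Corollary \ref{corollary: cycle reps class}). This causes no difficulty because each determinant factor is a conjugacy invariant: for any $h \in G$ one has $\det(1-\rho_i(hgh^{-1})u^{\ell(C)})=\det(1-\rho_i(g)u^{\ell(C)})$, and likewise for $\rho_1\oplus\rho_2$. Since I use a single fixed choice of $T$, $v_0$, $v_0'$, and of representative $C$ simultaneously in all three $L$-functions, the very same group element $g$ appears in each factor, so the block-diagonal computation applies verbatim and the resulting identity is independent of these choices.
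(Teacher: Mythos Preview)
Your proof is correct and follows essentially the same approach as the paper's, which simply observes that the identity is immediate from the block-diagonal form of $(\rho_1\oplus\rho_2)(F(C,Y/X))$. Your version is just a more careful elaboration of the same one-line argument, with the added (and harmless) remark on conjugacy invariance.
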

\begin{proof}
This directly follows from the definition of the direct sum of representations:
\[
(\rho_1\oplus\rho_2)(F(C,Y/X)) = \rho_1(F(C,Y/X)\rho_2(F(C,Y/X)).
\]
\end{proof}

\begin{pro}\label{proposition: trivial rep}
With the same notation as in Proposition \ref{proposition: L-function direct sum}, one has the following
\begin{equation}
L(u,1_G,Y/X)=\zeta_X(u),
\end{equation}
where $1_G$ is the trivial representation of $G$ and $\zeta_X(u)$ is the Ihara zeta function of $X$. 
\end{pro}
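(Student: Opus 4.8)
The plan is to observe that for the trivial representation the Frobenius data becomes irrelevant, so that the determinant factors in the definition of $L(u,1_G,Y/X)$ collapse to exactly the factors appearing in Storm's definition of $\zeta_X(u)$.

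First I would recall that the trivial representation $1_G$ is one-dimensional and satisfies $1_G(g)=1$ for every $g\in G$. In particular, for each prime cycle $C$ of $X$ we have $1_G(F(C,Y/X))=1$, independently of the actual value of the Frobenius element $F(C,Y/X)\in G$. Consequently each factor in Definition \ref{definition: L-function} is a $1\times 1$ determinant,
\[
\det\bigl(1-1_G(F(C,Y/X))\,u^{\ell(C)}\bigr)=1-u^{\ell(C)}.
\]

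Substituting this into the definition of the Artin-Ihara $L$-function yields
\[
L(u,1_G,Y/X)=\prod_{[C]}\bigl(1-u^{\ell(C)}\bigr)^{-1},
\]
where the product runs over all equivalence classes $[C]$ of prime cycles of $X$ (as in Definition \ref{definition: equivalence class of cycles}). By Definition \ref{definition: Storm zeta} this product is precisely $\zeta_X(u)$, which completes the argument.

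I do not anticipate any genuine obstacle here: the entire content is the observation that the trivial representation washes out the Frobenius elements, so well-definedness of $L(u,1_G,Y/X)$ (which we already recorded after Definition \ref{definition: L-function}) guarantees the indexing set of prime-cycle classes for $X$ matches that of $\zeta_X(u)$, and the two products agree factor by factor.
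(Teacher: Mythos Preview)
Your proof is correct and follows essentially the same approach as the paper: both arguments simply observe that $1_G(F(C,Y/X))=1$ makes each determinant factor equal to $1-u^{\ell(C)}$, so the $L$-function product collapses to the defining product for $\zeta_X(u)$.
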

\begin{proof}
Recall that the trivial representations means that $1_G:G \to \textrm{GL}(k)=k^\times$ sending all $g$ to the identity element $1_k \in \textrm{GL}(k)$. So, with $1_G$, one has
\[
L(u,1_G,Y/X) = \prod_{[C]} \det (1-u^{\ell(C)})^{-1} = \prod_{[C]} (1-u^{\ell(C)})^{-1} =\zeta_X(u).
\]
The last equality holds since the product runs through all equivalence classes of prime cycles in $X$.
\end{proof}

In \cite{storm2006zeta}, Storm showed that the zeta function $\zeta_Y(u)$ of a connected hypergraph $Y$ and the zeta function $\zeta_{B_Y}(u)$ of the associated bipartite graph is related as follows:
\[
\zeta_Y(u)=\zeta_{B_Y}(\sqrt{u}).
\]
In the following, we prove that the same result holds for $L$-functions. Recall that if $\pi:Y \to Y/G$ is a free Galois covering of hypergraphs with the Galois group $G$, then the free $G$-action on $Y$ induces a free $G$-action on $B_Y$ by Proposition \ref{proposition: free induced action} and that $B_Y/G$ is the quotient by this induced action. We let $\tilde{\pi}:B_Y \to B_Y/G$ be the projection. We start with the following proposition.

\begin{pro}\label{proposition: Galois cover}
Let $G$ be a group. Let $Y$ be a connected hypergraph and $\pi:Y \to Y/G$ be a free Galois covering of hypergraphs with the Galois group $G$. Then, there exists a graph isomorphism $\phi:B_{Y/G} \to B_Y/G$ such that $\phi\circ\textbf{B}(\pi)=\tilde{\pi}$, where $\mathbf{B}$ is the functor in Proposition \ref{proposition: faithful functor}.
%and $\mathbf{B}(\pi):B_Y \to B_{Y/G}$ is the projection map $\tilde{\pi}:B_Y \to B_Y/G$ under the isomorphism. 
\end{pro}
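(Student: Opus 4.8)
The plan is to write $\phi$ down explicitly as the identity on vertices together with an ``orbit-versus-incidence'' bijection on edges, and then to check that this assignment is a well-defined graph isomorphism commuting with the two projections. The entire content will be concentrated in showing that the edge map is a bijection, and this is exactly where the freeness hypothesis is used.

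First I would identify the two vertex sets. Since the induced $G$-action on $B_Y$ is free (Proposition \ref{proposition: free induced action}) and preserves the bipartition $V(B_Y)=V(Y)\sqcup E(Y)$, the $G$-orbit of a vertex $v\in V(Y)$ (resp.\ an edge $e\in E(Y)$) coincides with $[v]$ (resp.\ $[e]$). Hence $V(B_Y/G)=(V(Y)/G)\sqcup(E(Y)/G)=V(B_{Y/G})$ as sets, and I define $\phi$ to be the identity on vertices. For edges, an edge of $B_{Y/G}$ is a pair $([v],[e])$ with $[v]\in V_{[e]}$; by Lemma \ref{lemma: orbits} this means there is an incidence pair $(v',e')$ of $Y$ with $[v']=[v]$ and $[e']=[e]$, and I set $\phi([v],[e]):=[(v',e')]$, the $G$-orbit of that incidence pair, which is an edge of $B_Y/G$.

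The crux is well-definedness and injectivity of this edge assignment, and this is the main obstacle. Suppose $(v_1,e_1)$ and $(v_2,e_2)$ are incidence pairs with $[v_1]=[v_2]$ and $[e_1]=[e_2]$; write $v_2=gv_1$ and $e_2=he_1$ for some $g,h\in G$. Then $gv_1=v_2\in e_2=he_1$ forces $h^{-1}gv_1\in e_1$, while also $v_1\in e_1$, so condition (b) of Definition \ref{definition: free action} yields $h^{-1}g=\textrm{id}_G$, whence $g=h$ and $(v_2,e_2)=g(v_1,e_1)$ lie in a single orbit. This simultaneously shows that $\phi$ is well-defined on edges and injective; surjectivity is immediate, since an arbitrary orbit $[(v,e)]$ of $B_Y/G$ is the image of the edge $([v],[e])$ of $B_{Y/G}$ (note $[v]\in V_e=V_{[e]}$ because $v\in e$). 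Since $\phi$ is the identity on vertices and sends the edge $([v],[e])$, whose endpoints are $[v]$ and $[e]$, to the edge $[(v,e)]$, whose endpoints are likewise $[v]$ and $[e]$, it preserves incidence and is therefore a graph isomorphism.

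Finally I would verify the compatibility $\phi\circ\mathbf{B}(\pi)=\tilde{\pi}$. Recalling from Definition \ref{definition: induced map} that $\mathbf{B}(\pi)=\pi_*$ sends a vertex $a$ of $B_Y$ to its orbit and an incidence pair $(v,e)$ to the unique edge $([v],[e])$ of $B_{Y/G}$, while $\tilde{\pi}$ is the orbit projection, I evaluate on an edge $(v,e)$: $\phi(\mathbf{B}(\pi)(v,e))=\phi([v],[e])=[(v,e)]=\tilde{\pi}(v,e)$, where in applying $\phi$ I may take $(v,e)$ itself as the representative incidence pair. On vertices both composites are the orbit map, so the two morphisms agree. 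Everything outside the well-definedness/injectivity step is routine bookkeeping about how $G$-orbits interact with the bipartite construction.
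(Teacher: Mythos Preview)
Your argument is correct and follows the same overall route as the paper: identify the vertex sets of $B_{Y/G}$ and $B_Y/G$, and then set up the edge bijection compatible with the two projections. The paper handles edges by proving that adjacency is preserved in both directions and then reading off an edge bijection (tacitly using that both graphs are simple), whereas you construct the edge map explicitly and invoke condition~(b) of Definition~\ref{definition: free action} to show that two incidence pairs with the same orbit classes lie in a single $G$-orbit; this computation is exactly what verifies that $B_Y/G$ has no multi-edges, so your version makes transparent the one point the paper's proof leaves implicit.
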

\begin{proof}
We construct a graph isomorphism $\phi:B_{Y/G} \to B_Y/G$ such that $\phi\circ \tilde{\pi}=\textbf{B}(\pi)$. For any morphism $\sigma$ of hypergraphs, we let $\sigma_V$ (resp.~$\sigma_E$) be the map on vertices (resp.~edges). 

We first define $\phi_V:V(B_{Y/G}) \to V(B_Y/G)$. Let $v_1,v_2 \in V(B_Y)=V(Y)\sqcup E(Y)$. We claim that $\textbf{B}(\pi)(v_1)=\textbf{B}(\pi)(v_2)$ if and only if $\tilde{\pi}(v_1)=\tilde{\pi}(v_2)$. In fact, suppose that $\textbf{B}(\pi)(v_1)=\textbf{B}(\pi)(v_2)$. %Let $v_1'$ and $v_2'$ be the edges or vertices of $Y$ associated with $v_1$ and $v_2$ respectively. 
Since $\textbf{B}(\pi)$ is the morphism induced by $\pi$, either $v_1,v_2\in V(Y)$ and $\pi_V(v_1)=\pi_V(v_2)$ or $v_1,v_2\in E(Y)$ and $\pi_E(v_1)=\pi_E(v_2)$. Either way, $g(v_1)=v_2$ for some $g\in G$. It follows that $\tilde{\pi}(v_1)=\tilde{\pi}(v_2)$. On the other hand, suppose that $\tilde{\pi}(v_1)=\tilde{\pi}(v_2)$. Then, there exists $g\in G$ so that $g(v_1)=v_2$, and hence $\pi(v_1)=\pi(v_2)$. Since $\textbf{B}(\pi)$ is induced by $\pi$, this means that $\textbf{B}(\pi)(v_1)=\textbf{B}(\pi)(v_2)$. It now follows from the claim that $\textbf{B}(\pi)$ and $\tilde{\pi}$ partition $V(B_Y)$ into orbits in an identical way. Both $V(B_{Y/G})$ and $V(B_Y/G)$ are equal to the set of these orbits. Hence, there is an obvious one to one correspondence,
\[
\phi_V:V(B_{Y/G})\to V(B_Y/G),
\]
where $\phi_V(\textbf{B}(\pi)(v_0))=\tilde{\pi}(v_0)$ for any $v_0\in V(B_Y)$.

Next, we claim that vertices $v_1,v_2 \in V(B_{Y/G})$ are connected by an edge if and only if $\phi_V(v_1)$ and $\phi_V(v_2)$ are connected by an edge. Sine $B_{Y/G}$ and $B_Y/G$ are bipartite graphs, this will define a bijection $\phi_E:E(B_{Y/G}) \to E(B_Y/G)$, and $\phi=(\phi_V,\phi_E)$ will be the desired isomorphism. 

To prove the claim, first consider $v_1,v_2\in V(B_{Y/G})$ that are connected by an edge. We may assume that $v_1\in V(Y/G)$, $v_2\in E(Y/G)$ and $v_1$ is incident to $v_2$ in $Y/G$. Then, there exist $u_1\in \pi^{-1}(v_1)$ and $u_2\in \pi^{-1}(v_2)$ so that $u_1$ is incident to $u_2$. In particular, $\tilde{\pi}$ will map them to vertices connected by an edge in $B_Y/G$. Therefore, if $v_1$ and $v_2$ are connected by an edge, so are $\phi_V(v_1)$ and $\phi_V(v_2)$.

Conversely, consider $v_1,v_2\in V(B_Y/G)$ that are connected by an edge. There exist $u_1\in (\tilde{\pi})^{-1}(v_1)$ and $u_2\in (\tilde{\pi})^{-1}(v_2)$ that are connected by an edge. We may assume that $u_1\in V(Y)$, $u_2\in E(Y)$ and $u_1$ is incident to $u_2$. It follows that $\textbf{B}(\pi)(u_1)$ is incident to $\textbf{B}(\pi)(u_2)$, and hence $\phi_V^{-1}(v_1)$ and $\phi_V^{-1}(v_2)$ are connected by an edge as claimed. 
\end{proof}

\begin{lem}\label{lemma: v to v e to e}
Let $G$ be a group acting freely on a connected hypergraph $Y$, and let $\pi:Y \to Y/G$ and $\tilde{\pi}:B_Y \to B_Y/G$ be the corresponding quotient maps. If $\sigma \in \Aut(B_Y)$ such that $\tilde{\pi}\circ \sigma = \tilde{\pi}$, then $\sigma_V(V(Y)) = V(Y)$ and $\sigma_V(E(Y))=E(Y)$, where $V(Y)$ and $E(Y)$ are viewed as vertices of $B_Y$.
\end{lem}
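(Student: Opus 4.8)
The plan is to exploit the fact that the hypothesis $\tilde{\pi} \circ \sigma = \tilde{\pi}$ forces $\sigma$ to move each vertex of $B_Y$ only within its own $G$-orbit, combined with the observation that the induced $G$-action on $B_Y$ respects the bipartition $V(B_Y) = V(Y) \sqcup E(Y)$. First I would recall that the fibers of the quotient map $\tilde{\pi}:B_Y \to B_Y/G$ are precisely the $G$-orbits of vertices of $B_Y$. Hence, for any vertex $v \in V(B_Y)$, the equality $\tilde{\pi}(\sigma(v)) = \tilde{\pi}(v)$ means that $\sigma(v)$ and $v$ lie in the same orbit, so there exists some $g \in G$ (a priori depending on $v$) with $\sigma(v) = g(v)$.

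Next I would observe that every element of $G$, acting on $B_Y$ through the induced action of Proposition \ref{proposition: free induced action}, preserves the bipartition. Indeed, by the construction of the induced map in Definition \ref{definition: induced map}, an automorphism $(\varphi_V,\varphi_E)$ of $Y$ acts on $V(B_Y) = V(Y) \sqcup E(Y)$ by $\varphi_V$ on the $V(Y)$-part and by $\varphi_E$ on the $E(Y)$-part, and in particular it sends $V(Y)$ into $V(Y)$ and $E(Y)$ into $E(Y)$. Combining this with the previous step: if $v \in V(Y)$ then $\sigma(v) = g(v) \in V(Y)$, and if $v \in E(Y)$ then $\sigma(v) = g(v) \in E(Y)$. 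This yields the inclusions $\sigma_V(V(Y)) \subseteq V(Y)$ and $\sigma_V(E(Y)) \subseteq E(Y)$.

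Finally, since $\sigma$ is a bijection of the set $V(B_Y) = V(Y) \sqcup E(Y)$ and these two inclusions respect the disjoint partition, a routine surjectivity argument upgrades them to the desired equalities $\sigma_V(V(Y)) = V(Y)$ and $\sigma_V(E(Y)) = E(Y)$: any $x \in V(Y)$ is hit by $\sigma$, and it cannot lie in $\sigma_V(E(Y)) \subseteq E(Y)$, so it must lie in $\sigma_V(V(Y))$, and symmetrically for $E(Y)$. I do not expect a genuine obstacle here; the single point needing care is checking that the induced action honestly carries each part of the bipartition to itself, which is immediate from Definition \ref{definition: induced map}.
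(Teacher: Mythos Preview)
Your argument is correct, but it follows a different line from the paper's proof. The paper argues structurally: since $B_Y$ is connected (Lemma~\ref{lemma: sheets lemma}), any graph automorphism preserves distances and hence must either preserve or swap the two parts of the bipartition; the swap is then ruled out because $\tilde{\pi}$ sends the $V(Y)$-part and the $E(Y)$-part of $V(B_Y)$ to disjoint pieces of $V(B_Y/G)$, which is incompatible with $\tilde{\pi}\circ\sigma=\tilde{\pi}$ if $\sigma$ swapped. Your approach instead reads the hypothesis $\tilde{\pi}\circ\sigma=\tilde{\pi}$ as saying that $\sigma$ moves each vertex only within its $G$-orbit, and then uses that the induced $G$-action (Definition~\ref{definition: induced map}) visibly keeps $V(Y)$ and $E(Y)$ invariant. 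Your route is a bit more direct and does not rely on connectedness of $B_Y$ or on the ``preserve-or-swap'' dichotomy for bipartite automorphisms; the paper's route isolates a clean general fact about bipartite automorphisms before invoking the compatibility with $\tilde{\pi}$. Either way the conclusion is immediate. One small simplification in your final paragraph: since $V(B_Y)$ is finite and $\sigma_V$ is injective, $\sigma_V(V(Y))\subseteq V(Y)$ already forces equality by cardinality, so the separate surjectivity argument is not needed.
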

\begin{proof}
Since an automorphism $\sigma$ preserves distances between vertices, $\sigma$ satisfies exactly one of the following two:
\begin{enumerate}
    \item 
$\sigma_V(V(Y))=V(Y)$ and $\sigma_V(E(Y))=E(Y)$, or
    \item 
$\sigma_V(V(Y))=E(Y)$ and $\sigma_V(E(Y))=V(Y)$.    
\end{enumerate}  
But, since $\tilde{\pi}_V(V(Y)) \subseteq V(Y)$ and $\tilde{\pi}_V(E(Y)) \subseteq E(Y)$ from the condition $\tilde{\pi}\circ \sigma = \tilde{\pi}$, we necessarily have the first case. 
\end{proof}

\begin{pro}\label{proposition: same group}
Let $G$ be a group acting freely on a connected hypergraph $Y$, and let $\pi:Y \to Y/G$ and $\tilde{\pi}:B_Y \to B_Y/G$ be the corresponding quotient maps. Then, the following is an isomorphism of groups:
\[
\Psi:I:=\{\delta \in \Aut(Y) \mid \pi\circ \delta= \pi\} \to J:=\{\sigma \in \Aut(B_Y) \mid \tilde{\pi}\circ \sigma = \tilde{\pi}\}, \quad \delta \mapsto \mathbf{B}(\delta),
\]
where $\mathbf{B}$ is the functor in Proposition \ref{proposition: faithful functor}.
\end{pro}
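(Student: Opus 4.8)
The plan is to check in turn that $\Psi$ is well-defined (lands in $J$), is a group homomorphism, is injective, and is surjective, with surjectivity being the only substantive step. The two earlier results I will lean on throughout are the faithfulness/functoriality of $\mathbf{B}$ (Proposition \ref{proposition: faithful functor}) and the comparison isomorphism $\phi:B_{Y/G}\to B_Y/G$ with $\phi\circ\mathbf{B}(\pi)=\tilde{\pi}$ supplied by Proposition \ref{proposition: Galois cover}.

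First I would dispatch well-definedness and the homomorphism property, which are essentially formal. Given $\delta\in I$, so $\pi\circ\delta=\pi$, functoriality gives $\mathbf{B}(\pi)\circ\mathbf{B}(\delta)=\mathbf{B}(\pi\circ\delta)=\mathbf{B}(\pi)$. Composing on the left with $\phi$ and using $\phi\circ\mathbf{B}(\pi)=\tilde{\pi}$ yields $\tilde{\pi}\circ\mathbf{B}(\delta)=\tilde{\pi}$; since $\delta$ is an automorphism of $Y$, Lemma \ref{lemma: iso to iso} ensures $\mathbf{B}(\delta)\in\Aut(B_Y)$, so $\mathbf{B}(\delta)\in J$ and $\Psi$ indeed maps into $J$. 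That $\Psi$ is a homomorphism is immediate from $\mathbf{B}(\delta_1\circ\delta_2)=\mathbf{B}(\delta_1)\circ\mathbf{B}(\delta_2)$. Injectivity is equally quick: it is exactly the faithfulness of $\mathbf{B}$, since $\mathbf{B}(\delta_1)=\mathbf{B}(\delta_2)$ forces $\delta_1=\delta_2$.

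The heart of the argument is surjectivity, and here I would use Lemma \ref{lemma: v to v e to e}. Given $\sigma\in J$, that lemma tells us $\sigma_V$ respects the bipartition, i.e.\ $\sigma_V(V(Y))=V(Y)$ and $\sigma_V(E(Y))=E(Y)$. I can therefore define $\delta=(\delta_V,\delta_E)$ by restriction, setting $\delta_V:=\sigma_V|_{V(Y)}$ and $\delta_E:=\sigma_V|_{E(Y)}$. Because $\sigma$ is a graph automorphism of $B_Y$, it carries an edge between $v\in V(Y)$ and $e\in E(Y)$ — which encodes the incidence $v\in e$ in $Y$ — to an edge between $\sigma_V(v)$ and $\sigma_V(e)$, and this is precisely the condition that $\delta$ preserve incidences; since $\sigma$ is bijective on both vertex classes, $\delta\in\Aut(Y)$. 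As $B_Y$ is a simple bipartite graph, the induced automorphism $\mathbf{B}(\delta)$ is determined by its action on vertices, which agrees with $\sigma_V$ by construction, so $\mathbf{B}(\delta)=\sigma$. Finally, to see $\delta\in I$, I combine $\tilde{\pi}\circ\sigma=\tilde{\pi}$ with $\tilde{\pi}=\phi\circ\mathbf{B}(\pi)$ to get $\phi\circ\mathbf{B}(\pi)\circ\mathbf{B}(\delta)=\phi\circ\mathbf{B}(\pi)$; cancelling the isomorphism $\phi$ gives $\mathbf{B}(\pi\circ\delta)=\mathbf{B}(\pi)$, and faithfulness of $\mathbf{B}$ then yields $\pi\circ\delta=\pi$. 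Thus $\delta\in I$ with $\Psi(\delta)=\sigma$.

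I expect the only delicate point to be the surjectivity construction: verifying that the restricted maps $\delta_V,\delta_E$ genuinely assemble into a hypergraph automorphism and that $\mathbf{B}(\delta)$ recovers $\sigma$ on edges as well as on vertices. This hinges on $B_Y$ being simple, so that vertex data determine edge data, together with Lemma \ref{lemma: v to v e to e} ruling out the swap $V(Y)\leftrightarrow E(Y)$. Everything else reduces to formal bookkeeping with the functor $\mathbf{B}$, its faithfulness, and the fixed comparison isomorphism $\phi$.
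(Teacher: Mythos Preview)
Your proposal is correct and follows essentially the same approach as the paper: well-definedness via functoriality and Proposition \ref{proposition: Galois cover}, injectivity via faithfulness of $\mathbf{B}$, and surjectivity by invoking Lemma \ref{lemma: v to v e to e} to restrict $\sigma_V$ to the two vertex classes, assembling a hypergraph automorphism $\delta$, and using simplicity of $B_Y$ plus faithfulness to conclude $\mathbf{B}(\delta)=\sigma$ and $\delta\in I$. The only cosmetic difference is that you make the homomorphism property and the appeal to Lemma \ref{lemma: iso to iso} explicit, whereas the paper leaves these implicit.
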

\begin{proof}
We first note that $\Psi$ is well-defined. To be precise, if $\delta \in \Aut(Y)$ such that $\pi\circ \delta = \pi$, then we have
\[
\mathbf{B}(\pi \circ \delta) =\mathbf{B}(\pi) \iff \mathbf{B}(\pi) \circ \mathbf{B}(\delta) =\mathbf{B}(\pi) \iff \tilde{\pi}\circ \mathbf{B}(\delta) = \tilde{\pi},
\]
where the first equivalence holds since $\mathbf{B}$ is a functor and the second equivalence follows from Proposition \ref{proposition: Galois cover}. Moreover, $\Psi$ is injective since the functor $\mathbf{B}$ is faithful (Proposition \ref{proposition: faithful functor}). 

Next, we prove that $\Psi$ is surjective. Take $\sigma=(\sigma_V,\sigma_E) \in J$. Let $V_1$ (resp.~$V_2$) be the set of the vertices of $B_Y$ obtained from the vertices (resp.~edges) of $Y$. It follows from Lemma \ref{lemma: v to v e to e} that $\sigma_{V_1}:V_1 \to V_1$ and $\sigma_{V_2}:V_2 \to V_2$ are bijections, where $\sigma_{V_i}=\sigma_V\mid_{V_i}$. Viewed $V_1$ and $V_2$ as vertices and edges in $Y$, we have $\sigma_{V_1}:V(Y) \to V(Y)$ and $\sigma_{V_2}:E(Y) \to E(Y)$. 

Let $\delta:=(\sigma_{V_1},\sigma_{V_2})$. We first claim that $\delta:Y \to Y$ is a morphism of hypergraphs. In fact, suppose that $x \in V(Y)$, $e \in E(Y)$ such that $x \in e$. This implies that there exists a unique edge $\alpha \in E(B_Y)$ connecting $x$ and $e$, viewed as vertices in $B_Y$. It follows that $\sigma_E(\alpha)$ is an edge of $B_Y$ connecting $\sigma_V(x)=\sigma_{V_1}(x)$ and $\sigma_V(e)=\sigma_{V_2}(e)$. In particular, viewed in $Y$, $\sigma_{V_1}(x) \in \sigma_{V_2}(e)$. In other words, $\delta$ preserves incidence relations on $Y$, and hence it is a morphism of hypergraphs. Moreover, since $\sigma_{V_1}$ and $\sigma_{V_2}$ are bijections, we have $\delta \in \textrm{Aut}(Y)$.

Next, we show that $\mathbf{B}(\delta)=\sigma$. In fact, from Definition \ref{definition: induced map}, if $\mathbf{B}(\delta)=(f_V,f_E)$, then clearly one has $f_V=\sigma_{V_1} \sqcup \sigma_{V_2} = \sigma_V$. To show $f_E=\sigma_E$, suppose that $\alpha \in E(B_Y)$ is obtained from an incidence relation between $x \in V(Y)$ and $e \in E(Y)$. Now, from the definition of $f_E$, the edge $f_E(\alpha)$ is obtained from an incidence relation between $\sigma_{V_1}(x) \in V(Y)$ and $\sigma_{V_2}(e) \in E(Y)$. Since $B_Y$ is a simple graph, this implies that $f_E(\alpha)=\sigma_E(\alpha)$. Hence, $\mathbf{B}(\delta)=\sigma$.

Finally, we have
\[
\tilde{\pi}\circ \sigma =\tilde{\pi} \iff \mathbf{B}(\pi)\circ \mathbf{B}(\delta) =\mathbf{B}(\pi\circ \delta) = \mathbf{B}(\mathbf{\pi}). 
\]
But, since the functor $\mathbf{B}$ is faithful, we have $\pi\circ \delta =\pi$, showing that $\delta \in I$ and hence $\Psi$ is surjective. 
\end{proof} 

\begin{cor}\label{corollary: same galois group}
Let $G$ be a group and $Y$ be a connected hypergraph. Let $\pi:Y \to Y/G$ be a free Galois covering of hypergraphs with the Galois group $G$. Then, the induced map $\textbf{B}(\pi):B_Y \to B_{Y/G}$ is also a free Galois covering of bipartite graphs with the Galois group $G$. 
\end{cor}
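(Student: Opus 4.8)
The plan is to verify the three defining conditions of a free Galois covering of graphs for the morphism $\mathbf{B}(\pi):B_Y \to B_{Y/G}$, each of which has essentially been prepared by an earlier result in this section. Throughout, the relevant $G$-action on $B_Y$ is the one induced from the $G$-action on $Y$ via the functor $\mathbf{B}$ (equivalently, through the injection $B_*:\Aut(Y)\to\Aut(B_Y)$), and this is exactly the action with respect to which $B_Y/G$ and the projection $\tilde{\pi}:B_Y\to B_Y/G$ were defined in the paragraph preceding Proposition \ref{proposition: Galois cover}.

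First, for freeness: since $\pi$ is a free Galois covering of hypergraphs, $G$ acts freely on $Y$, and Proposition \ref{proposition: free induced action} immediately gives that the induced $G$-action on $B_Y$ is also free. This settles the first condition. Second, for the quotient identification, I need $B_{Y/G}\cong B_Y/G$ in a manner compatible with the two projections, so that $\mathbf{B}(\pi)$ is identified with $\tilde{\pi}$. This is precisely Proposition \ref{proposition: Galois cover}, which supplies a graph isomorphism $\phi:B_{Y/G}\to B_Y/G$ with $\phi\circ\mathbf{B}(\pi)=\tilde{\pi}$; hence, up to $\phi$, the map $\mathbf{B}(\pi)$ is the quotient map $\tilde{\pi}$, giving the second condition.

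Third, for the Galois-group condition, I must show that $G\cong\{\sigma\in\Aut(B_Y)\mid \mathbf{B}(\pi)\circ\sigma=\mathbf{B}(\pi)\}$. Using $\mathbf{B}(\pi)=\phi^{-1}\circ\tilde{\pi}$ from Proposition \ref{proposition: Galois cover} together with the injectivity of $\phi^{-1}$, for any $\sigma\in\Aut(B_Y)$ one has $\mathbf{B}(\pi)\circ\sigma=\mathbf{B}(\pi)$ if and only if $\tilde{\pi}\circ\sigma=\tilde{\pi}$, so the pertinent automorphism group is exactly $J=\{\sigma\in\Aut(B_Y)\mid\tilde{\pi}\circ\sigma=\tilde{\pi}\}$. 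Now condition (3) of Definition \ref{definition: Galois group}, applied to the hypergraph covering $\pi$, yields $G\cong I=\{\delta\in\Aut(Y)\mid\pi\circ\delta=\pi\}$, while Proposition \ref{proposition: same group} provides a group isomorphism $\Psi:I\to J$. Composing gives $G\cong J$, which is the third condition.

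The substantive work, namely the identification of $B_{Y/G}$ with $B_Y/G$ and of the two automorphism groups $I$ and $J$, has already been absorbed into Propositions \ref{proposition: Galois cover} and \ref{proposition: same group}; with these in hand the corollary reduces to an assembly of existing results. The one point I would take care to confirm is that the $G$-action used to form $B_Y/G$ is genuinely the one induced from $Y$ through $\mathbf{B}$, so that all three invoked statements refer to the \emph{same} action on $B_Y$; this consistency is exactly how $\tilde{\pi}$ and $B_Y/G$ were set up, so no additional argument is needed.
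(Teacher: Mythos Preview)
Your proof is correct and follows essentially the same approach as the paper's: invoke Proposition~\ref{proposition: free induced action} for freeness, Proposition~\ref{proposition: Galois cover} to identify $\mathbf{B}(\pi)$ with $\tilde{\pi}$, and Proposition~\ref{proposition: same group} (together with condition~(3) of Definition~\ref{definition: Galois group}) for the Galois-group condition. Your version is more explicit in justifying the passage between $\mathbf{B}(\pi)$ and $\tilde{\pi}$ in the third step, but the underlying argument is the same.
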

\begin{proof}
By Proposition \ref{proposition: Galois cover}, we can view $\textbf{B}(\pi)$ as $\tilde{\pi}:B_Y \to B_Y/G$, and by Proposition \ref{proposition: free induced action} the action of $G$ on $B_Y$ is free. Finally, from Proposition \ref{proposition: same group}, $G$ is isomorphic to $\{\sigma \in \Aut(B_Y) \mid \textbf{B}(\pi)\circ \sigma = \textbf{B}(\pi)\}$. 
\end{proof}

The following is a key result to link the Artin-Ihara $L$-function of a hypergraph $Y$ and that of the associated bipartite graph $B_Y$.

\begin{lem}\label{lemma: L-function key lemma}
Let $Y$ be a connected hypergraph and $\pi: Y\to Y/G$ be a free Galois covering of hypergraphs. There exist sheet partitions of $V(Y)$ and $V(B_Y)$ so that any $v_i\in V(Y)$ has the same sheet number whether it is viewed as a vertex in $V(Y)$ or viewed as a vertex in $B_Y$.
\end{lem}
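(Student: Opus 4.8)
The plan is to make the data defining the two sheet partitions literally coincide, and then to reduce the comparison of sheet numbers to the path characterization of Proposition \ref{proposition: path sheet}. By Corollary \ref{corollary: same galois group}, the induced map $\mathbf{B}(\pi):B_Y \to B_{Y/G}$ is a free Galois covering of graphs with Galois group $G$, where we identify $B_{Y/G}$ with $B_Y/G$ via the isomorphism of Proposition \ref{proposition: Galois cover}. First I would fix a single spanning tree $T$ of $B_{Y/G}$ together with base vertices $v_0 \in V(Y/G)$ and $v_0' \in V(Y)$ satisfying $\pi(v_0')=v_0$, and use the same triple $(T,v_0,v_0')$ both in Construction \ref{construction: sheet partition} to partition $V(Y)$ and in the graph sheet construction of Subsection \ref{subsection: L-functions for graphs} applied to $\mathbf{B}(\pi)$ to partition $V(B_Y)$. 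Since $v_0' \in V(Y) \subseteq V(B_Y)$ lies in the identity sheet of both partitions by construction, the two partitions share the same identity-sheet base point.

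Next I would set up the dictionary between hypergraph paths and alternating walks in the bipartite graph. A hypergraph path $(u_0,e_1,u_1,\dots,e_n,u_n)$ in $Y$ (or in $Y/G$) corresponds to the walk $(u_0,e_1,u_1,\dots,e_n,u_n)$ in $B_Y$ (or $B_{Y/G}$) whose vertices alternate between $V$-type and $E$-type; this is a genuine walk because $u_{i-1},u_i \in e_i$ translates into the edges $(u_{i-1},e_i)$ and $(e_i,u_i)$ of the bipartite graph. Conversely, any walk in $B_Y$ between two $V(Y)$-type vertices is forced to alternate, hence reads off a hypergraph path. The crucial point is that this correspondence intertwines the two notions of lifting: by uniqueness of hypergraph path lifts (Lemma \ref{lemma: unique path lift}) and uniqueness of walk lifts for graph coverings, the $\mathbf{B}(\pi)$-lift of the bipartite walk associated to a hypergraph path $P$, after reading off its alternating vertices, is exactly the $\pi$-lift $P'$ of $P$; in particular the two lifts have the same terminal vertex.

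With these observations in place, I would conclude as follows. Proposition \ref{proposition: path sheet} states that two vertices of $V(Y)$ have the same hypergraph sheet number if and only if they are joined by a path $P$ in $Y$ with $\pi(P)$ contained in $T$ (viewed in $B_{Y/G}$). Under the path--walk dictionary this condition is identical to the statement that the two vertices, viewed in $B_Y$, are joined by a walk projecting into $T$, which is precisely the criterion for equality of graph sheet numbers. Hence the two sheet equivalence relations agree on $V(Y)$. Because both partitions place $v_0'$ in the identity sheet and the $G$-action on the $V(Y)$-type vertices of $B_Y$ coincides with the $G$-action on $V(Y)$, the labeling by group elements also agrees: if $g^{-1}(v)$ lies in the identity sheet in one picture it does so in the other, so $v$ receives sheet number $g$ in both. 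This establishes the claim.

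The main obstacle is the compatibility of the two lifting procedures in the second step: one must verify that lifting a path in the hypergraph sense (Lemma \ref{lemma: unique path lift}, which rests on the Li--Hou local isomorphism on neighborhoods) yields the same vertices as lifting the associated walk in the graph-covering sense. I expect this to follow cleanly from uniqueness of lifts on both sides once the path--walk dictionary is in hand, but it is precisely the point where the two distinct covering formalisms must be reconciled, and hence where care is required.
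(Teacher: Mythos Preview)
Your proposal is correct and follows essentially the same line as the paper's proof: fix a single triple $(T,v_0,v_0')$ for both constructions, use Proposition~\ref{proposition: path sheet} to identify the identity sheet in $V(Y)$ via paths projecting into $T$, and then transport the labeling to $B_Y$ using that the $G$-action on $B_Y$ is induced from the one on $Y$. The paper is terser---it simply observes that a vertex $v_i'$ in the hypergraph identity sheet is joined to $v_0'$ by a path whose bipartite image lies over $T$, hence lies in the lifted tree and thus in the graph identity sheet---whereas you spell out the path--walk dictionary and the compatibility of the two lifting procedures explicitly; but the substance is the same, and the ``main obstacle'' you flag is indeed resolved, as you anticipate, by uniqueness of lifts on both sides.
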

\begin{proof}
We use Construction \ref{construction: sheet partition} to partition $V(Y)$, and we use $G$-action on $B_Y$ to partition $V(B_Y)$ (as in Section \ref{subsection: L-functions for graphs}).

For the sheet partition for $Y$, choose a spanning tree $T$ of $B_{Y/G}$ and vertices $v_0\in V(Y/G)$ and $v_0'\in V(Y)$ such that $\pi(v_0')=v_0$. Then we partition $Y$ into sheets following Construction \ref{construction: sheet partition}. 

For the sheet partition for $B_Y$, since $B_{Y/G}=B_Y/G$, we can use the same $T$ for the partitioning of $B_Y$. We can uniquely lift $T$ to $B_Y$ in such a way that the vertex $v_0$ in $V(B_Y/G)$ is lifted to the vertex $v_0'$ in $V(B_Y)$.

Consider $v_i\in V(Y)$ with sheet number $g$, i.e., there exists a vertex $v_i' \in V(Y)$ with sheet number $\textrm{id}_G$ so that $g(v_i')=v_i$. %Now, let $v_i^*,(v_i')^*\in V(B_Y)$ be the vertices $v_i$ and $v_i'$ viewed in $B_Y$ respectively.
By Proposition \ref{proposition: path sheet}, there exists a path $P$ between $v_i'$ and the vertex $v_0'$ (viewed in $B_Y$) so that $\mathbf{B}(\pi)(P)$ is contained in $T$. This means that $v_i'$ has sheet number $\textrm{id}_G$ (viewed in $B_Y$). The way $g$ acts on $B_Y$ is induced by how $g$ acts on $Y$. In particular, $g(v_i')=v_i$ implies that $v_i$ has sheet number $g$ when it is viewed as a vertex in $B_Y$.
\end{proof}

\begin{lem} \label{lemma: cycle corr}
Let $Y$ be a connected hypergraph and $\pi: Y\to X=Y/G$ be a free Galois covering of hypergraphs. Let $C$ be a cycle in $Y/G$ and $B_C$ be the associated cycle of $C$ in $B_{Y/G}$. Then, one has the following:
\[
F(C,Y/X)=F(B_C,B_Y/B_X).
\]
\end{lem}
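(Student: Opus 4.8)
The plan is to compute both Frobenius elements from a \emph{single} lift, using the compatible sheet partitions supplied by Lemma~\ref{lemma: L-function key lemma}. First I would fix, once and for all, a spanning tree $T$ of $B_{Y/G} = B_Y/G$ together with base points $v_0 \in V(Y/G)$ and $v_0' \in V(Y)$, and use them both to partition $V(Y)$ into sheets via Construction~\ref{construction: sheet partition} and to partition $V(B_Y)$ into sheets via the induced $G$-action (as in Section~\ref{subsection: L-functions for graphs}). By Lemma~\ref{lemma: L-function key lemma} these choices can be made so that every $v \in V(Y)$ has the same sheet number whether it is regarded as a vertex of $Y$ or as a vertex of $B_Y$. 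By Corollary~\ref{corollary: same galois group}, $\mathbf{B}(\pi)\colon B_Y \to B_{Y/G}=B_X$ is a free Galois covering with Galois group $G$, so the graph Frobenius element $F(B_C, B_Y/B_X)$ is defined with respect to this very partition.

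Write $C = (v_1, e_1, v_2, e_2, \dots, e_n, v_1)$, so that the associated cycle in $B_X$ is $B_C = (v_1, \alpha_1, e_1, \beta_1, v_2, \dots, e_n, \beta_n, v_1)$, where $\alpha_i = (v_i, e_i)$ and $\beta_i = (e_i, v_{i+1})$ are the edges of $B_X$ (indices mod $n$). Let $\tilde v_1 \in V(Y)$ be the lift of $v_1$ with sheet number $\mathrm{id}_G$, and let $P'$ be the unique lift of $C$ to $Y$ starting at $\tilde v_1$ (Lemma~\ref{lemma: unique path lift}), with terminal vertex $w$; by Definition~\ref{definiion: Frobenius}, $F(C,Y/X)$ is the sheet number of $w$. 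Let $\widetilde{B_C}$ be the unique lift of $B_C$ to $B_Y$ starting at $\tilde v_1$; by the compatibility above, $\tilde v_1$ also has sheet number $\mathrm{id}_G$ in $B_Y$, so $F(B_C,B_Y/B_X)$ is the sheet number of the terminal vertex of $\widetilde{B_C}$.

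The heart of the argument is to identify these two lifts. I would extract from $\widetilde{B_C}$ its sequence of vertices; since consecutive vertices of $\widetilde{B_C}$ are joined by edges of $B_Y$ and such edges encode incidences of $Y$, this sequence is a genuine hypergraph path $Q$ in $Y$ beginning at $\tilde v_1$, in which a single incidence-step $v_i \to e_i \to v_{i+1}$ of $Q$ corresponds to the two-edge segment $(\,\cdot,\alpha_i, e_i, \beta_i, \cdot\,)$ of $\widetilde{B_C}$. Because $\mathbf{B}(\pi)$ sends $\widetilde{B_C}$ to $B_C$ and restricts to $\pi$ on the vertices coming from $V(Y)$ and $E(Y)$, the projection $\pi(Q)$ is exactly $C$. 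By uniqueness of path lifts (Lemma~\ref{lemma: unique path lift}), $Q = P'$; in particular the terminal vertex of $\widetilde{B_C}$ is $w$. Finally, by Lemma~\ref{lemma: L-function key lemma} the sheet number of $w$ in $B_Y$ equals its sheet number in $Y$, whence $F(B_C, B_Y/B_X) = F(C, Y/X)$.

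The main obstacle is the bookkeeping in the third paragraph: one must verify carefully that passing back and forth between a hypergraph path and its bipartite incarnation is compatible with lifting, i.e.\ that the (uniquely determined) bipartite lift $\widetilde{B_C}$ restricts on its $V(Y)$- and $E(Y)$-vertices to the (uniquely determined) hypergraph lift $P'$. Everything else is formal once the two coverings are matched via Proposition~\ref{proposition: Galois cover} and the sheet partitions are aligned via Lemma~\ref{lemma: L-function key lemma}.
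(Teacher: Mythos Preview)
Your proof is correct and follows essentially the same strategy as the paper: align the two sheet partitions via Lemma~\ref{lemma: L-function key lemma}, and then identify a single path that computes both Frobenius elements. The only cosmetic difference is the direction of the identification: the paper first lifts $C$ to $C'$ in $Y$ and then takes the associated bipartite path $B_{C'}$, checking via Proposition~\ref{proposition: Galois cover} that $\mathbf{B}(\pi)(B_{C'})=B_C$; you instead lift $B_C$ first in $B_Y$ and then extract a hypergraph path $Q$, invoking uniqueness to conclude $Q=P'$. The paper's direction avoids the bookkeeping you flag in your last paragraph, since the functor $\mathbf{B}$ automatically turns the hypergraph lift into a bipartite lift with the correct projection and endpoints.
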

\begin{proof}
Partition $V(Y)$ and $V(B_Y)$ into sheets as in Lemma \ref{lemma: L-function key lemma}. Let $g=F(C,Y/G)$. Lift $C$ to $C'$ in $Y$ so that the initial vertex has sheet number $\textrm{id}_G$. The terminal vertex has sheet number $g$. Consider $B_{C'}$, the associated path to $C'$ in $B_Y$. By Proposition \ref{proposition: Galois cover}, $\mathbf{B}(\pi)(B_{C'})=C$. It follows from Lemma \ref{lemma: L-function key lemma} that the initial vertex of $B_{C'}$ has sheet number $\textrm{id}_G$ and the terminal vertex has sheet number $g$. Hence, $F(B_C,B_Y/B_X)=g$.
\end{proof}

\begin{mythm}\label{theorem: linking L-functions}
Let $Y$ be a connected hypergraph and $\pi:Y\to X$ be a free Galois covering of hypergraphs. Let $G$ be the Galois group of $\pi$ and $\rho$ be a representation of $G$. Then, one has the following:
\[
L(u,\rho,Y/X) = L(\sqrt{u},\rho,B_Y/B_X).
\]
\end{mythm}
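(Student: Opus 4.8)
The plan is to match the Euler-product factors of the two $L$-functions one prime cycle at a time, via the length-doubling bijection between prime cycles of $X$ and prime cycles of $B_X$ that also underlies Storm's identity $\zeta_H(u)=\zeta_{B_H}(\sqrt u)$ (Theorem \ref{theorem: zeta for hypergraoh via bipartite}). First I would recall the assignment $C \mapsto B_C$ sending a cycle $C=(v_0,e_1,v_1,\dots,e_n,v_0)$ of $X=Y/G$ to its associated cycle
\[
B_C=(v_0,\alpha_1,e_1,\beta_1,v_1,\alpha_2,\dots,v_{n-1},\alpha_n,e_n,\beta_n,v_0)
\]
in $B_X$, where $\alpha_i=(v_{i-1},e_i)$ and $\beta_i=(e_i,v_i)$ are the incidence edges of the bipartite graph. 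Since $B_X$ is bipartite, every closed path alternates between the $V(X)$- and $E(X)$-vertices, so $\ell(B_C)=2\ell(C)$, and the bookkeeping $(\sqrt u)^{\ell(B_C)}=u^{\ell(C)}$ is immediate.

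Next I would establish that $C\mapsto B_C$ descends to a bijection $[C]\mapsto[B_C]$ from equivalence classes of prime cycles of $X$ (Definition \ref{definition: equivalence class of cycles}) to equivalence classes of prime cycles of $B_X$. In one direction, an edge-backtracking $(e,v,e)$ of $C$ corresponds precisely to a backtracking $\beta_i=\alpha_{i+1}^{-1}$ of $B_C$ at a $V(X)$-vertex; applying this dictionary to $C^2$ matches the tail-less conditions, and $C\neq D^m$ forces $B_C$ to be a non-power, so $C$ prime yields $B_C$ prime. Conversely, given a prime cycle $D$ of $B_X$, its even length lets me rotate a representative to begin at a $V(X)$-vertex and read off a hypergraph cycle $C$ with $B_C\sim D$; the same dictionary shows $C$ is prime. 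Because this is exactly the combinatorial correspondence Storm uses for Theorem \ref{theorem: zeta for hypergraoh via bipartite}, I would cite it for this input rather than re-derive it in full.

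With the bijection in hand, the Frobenius elements are controlled by the results already established. By Corollary \ref{corollary: same galois group}, $\mathbf{B}(\pi):B_Y\to B_X$ is itself a free Galois covering with Galois group $G$, so $F(C,Y/X)$ and $F(B_C,B_Y/B_X)$ both lie in $G$; and by Lemma \ref{lemma: cycle corr} (whose compatible sheet partitions of $V(Y)$ and $V(B_Y)$ come from Lemma \ref{lemma: L-function key lemma}) they are equal. Hence the factor of $L(\sqrt u,\rho,B_Y/B_X)$ indexed by $[B_C]$ satisfies
\[
\det\!\left(1-\rho(F(B_C,B_Y/B_X))(\sqrt u)^{\ell(B_C)}\right)^{-1}=\det\!\left(1-\rho(F(C,Y/X))u^{\ell(C)}\right)^{-1},
\]
which is exactly the factor of $L(u,\rho,Y/X)$ indexed by $[C]$. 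Since the determinant is a conjugacy invariant, this equality is insensitive to the conjugacy ambiguity in the Frobenius elements, so the factors are genuinely equal; taking the product over the bijective index sets gives $L(u,\rho,Y/X)=L(\sqrt u,\rho,B_Y/B_X)$.

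The main obstacle I expect is the bijection of prime-cycle classes, specifically verifying that the no-backtracking, tail-less, and non-power conditions transfer in both directions under $C\leftrightarrow B_C$ and that the cyclic-rotation equivalence is compatible on the two sides. Once that is granted, the length-doubling and the Frobenius bookkeeping are routine. The cleanest write-up therefore leans on Theorem \ref{theorem: zeta for hypergraoh via bipartite} for the combinatorial correspondence and contributes the genuinely new ingredient, the Frobenius compatibility of Lemma \ref{lemma: cycle corr}, to upgrade Storm's equality of zeta functions to an equality of Artin--Ihara $L$-functions.
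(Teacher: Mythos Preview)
Your proposal is correct and follows essentially the same approach as the paper: both invoke Storm's length-doubling bijection between prime cycles of $X$ and of $B_X$ (cited as \cite[Proposition 9]{storm2006zeta} in the paper), then apply Lemma \ref{lemma: cycle corr} to equate the Frobenius elements, and finally match the Euler factors term-by-term via $(\sqrt{u})^{\ell(B_C)}=u^{\ell(C)}$. Your write-up is slightly more detailed in spelling out why the bijection respects the prime and equivalence-class conditions and in noting the conjugacy invariance of the determinant, but the underlying argument is the same.
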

\begin{proof}
From \cite[Proposition 9]{storm2006zeta}, we know that there is a one to one correspondence between prime cycles $C$ of length $\ell$ in $Y$ and prime cycles $B_C$ of length $2\ell$ in $B_Y$. Moreover, under this correspondence, we have $F(C,Y/X)=F(B_C,B_Y/B_X)$ by Lemma \ref{lemma: cycle corr}. Now, our assertion directly follows:
\[
L(u,\rho,Y/X)=\prod_{[C]} \det (1-\rho(F(C,Y/X))u^{\ell(C)})^{-1}=\prod_{[B_C]} \det (1-\rho(F(B_C,B_Y/B_X))u^{\ell(B_C/2)})^{-1}
\]
\[
= \prod_{[B_C]} \det (1-\rho(F(B_C,B_Y/B_X))(\sqrt{u})^{\ell(B_C)})^{-1}=L(\sqrt{u},\rho,B_Y/B_X).
\]
\end{proof}

\begin{rmk}
Storm showed in \cite[Examples 15 and 18]{storm2006zeta} that there are zeta functions of hypergraphs which do not arise from any graphs. Together with our results, this implies that there are $L$-functions of Galois coverings of hypergraphs which do not arise from any Galois covering of graphs. 
\end{rmk}

From Theorem \ref{theorem: linking L-functions}, we obtain the following results.

\begin{cor}\label{corollary: 1}(Factorization of the Ihara zeta function)
Let $Y$ be a connected hypergraph and $\pi:Y\to X$ be a free Galois covering of hypergraphs. Let $G$ be the Galois group of $\pi$. Let $\widehat{G}$ be a complete set of inequivalent irreducible representations of $G$. Then, one has the following factorization:
\[
\zeta_Y(u) = \prod_{\rho \in \widehat{G}} L(u,\rho,Y/X)^{d_\rho},
\]
where $d_\rho$ is the dimension of $\rho$. 
\end{cor}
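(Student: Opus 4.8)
The plan is to reduce the hypergraph factorization to the already-established graph factorization for the induced bipartite covering $\mathbf{B}(\pi)$, using the substitution $u \mapsto \sqrt{u}$ as the connecting bridge. First I would invoke Storm's identity (Theorem \ref{theorem: zeta for hypergraoh via bipartite}) to rewrite $\zeta_Y(u) = \zeta_{B_Y}(\sqrt{u})$, thereby converting the assertion about the hypergraph zeta function into one about the zeta function of the associated bipartite graph $B_Y$.

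Next I would invoke Corollary \ref{corollary: same galois group}, which guarantees that the induced map $\mathbf{B}(\pi): B_Y \to B_X$ is itself a free Galois covering of graphs with the \emph{same} Galois group $G$. This step is what makes the argument go through cleanly: it ensures that the indexing set $\widehat{G}$ of inequivalent irreducible representations, together with the dimensions $d_\rho$, is literally identical on the hypergraph side and the graph side, so no reconciliation of the two representation theories is required. With the covering $\mathbf{B}(\pi)$ identified, I would apply the graph-theoretic factorization of Stark and Terras, namely Theorem \ref{theorem: L function theorems for graphs}(4), evaluated at the variable $\sqrt{u}$, to obtain $\zeta_{B_Y}(\sqrt{u}) = \prod_{\rho \in \widehat{G}} L(\sqrt{u},\rho,B_Y/B_X)^{d_\rho}$.

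Finally I would transport this expression back to the hypergraph setting by Theorem \ref{theorem: linking L-functions}, which supplies the factor-by-factor identity $L(\sqrt{u},\rho,B_Y/B_X) = L(u,\rho,Y/X)$ for every representation $\rho$ of $G$. Substituting each factor then yields the chain $\zeta_Y(u) = \zeta_{B_Y}(\sqrt{u}) = \prod_{\rho \in \widehat{G}} L(\sqrt{u},\rho,B_Y/B_X)^{d_\rho} = \prod_{\rho \in \widehat{G}} L(u,\rho,Y/X)^{d_\rho}$, which is exactly the claimed factorization.

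I do not anticipate a genuine mathematical obstacle, since all the substantive work has been front-loaded into the earlier results: establishing that the induced map is a Galois covering with the same Galois group, matching the Frobenius elements across the two settings (Lemma \ref{lemma: cycle corr}), and relating the two families of $L$-functions (Theorem \ref{theorem: linking L-functions}). The only points demanding care are purely bookkeeping: confirming that $\widehat{G}$ and the multiplicities $d_\rho$ coincide on both sides (which Corollary \ref{corollary: same galois group} provides), and verifying that the substitution $\sqrt{u}$ is applied consistently across all three cited identities so that the chain of equalities actually closes.
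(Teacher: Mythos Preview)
Your proposal is correct and matches the paper's own proof essentially line for line: the paper writes the chain $\zeta_Y(u)=\zeta_{B_Y}(\sqrt{u})=\prod_{\rho \in \widehat{G}} L(\sqrt{u},\rho,B_Y/B_X)^{d_\rho} = \prod_{\rho \in \widehat{G}} L(u,\rho,Y/X)^{d_\rho}$, invoking Storm's identity, the graph-theoretic factorization, and Theorem~\ref{theorem: linking L-functions} in exactly the order you describe. Your explicit mention of Corollary~\ref{corollary: same galois group} to justify that $\mathbf{B}(\pi)$ is a free Galois covering with the same $G$ is a point the paper leaves implicit, so if anything your write-up is slightly more careful.
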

\begin{proof}
By applying the same factorization for graphs (\cite[Corollary 18.11]{terras2010zeta}), we have
\[
\zeta_Y(u)=\zeta_{B_Y}(\sqrt{u})=\prod_{\rho \in \widehat{G}} L(\sqrt{u},\rho,B_Y/B_X)^{d_\rho} = \prod_{\rho \in \widehat{G}} L(u,\rho,Y/X)^{d_\rho}.
\]
\end{proof}

\begin{cor}\label{corollary: 2}
With the same notation and assumption as in Corollary \ref{corollary: 1}, one has the following
\begin{equation}
L(u,\rho_G,Y/X)=\zeta_Y(u),
\end{equation}
where $\rho_G$ is the right regular representation. 
\end{cor}
\begin{proof}
Notice that we have
\[
\zeta_Y(u)=\zeta_{B_Y}(\sqrt{u})=L(\sqrt{u},\rho_G,B_Y/B_X) =L(u,\rho_G,Y/X),
\]
where the first equality by from Storm \cite{storm2006zeta}, the second equality is from Theorem \ref{theorem: L function theorems for graphs} and Corollary \ref{corollary: same galois group}, and the last equality is from Theorem \ref{theorem: linking L-functions}.
\end{proof}

From Theorem \ref{theorem: linking L-functions}, we may use determinant formulas, such as  \cite[Theorem 18.8]{terras2010zeta}, for Artin-Ihara L-functions for graphs to compute those for hypergraphs.

Here is an example which we compute by using associated bipartite graphs. 

\begin{myeg}
Consider the following hypergraph $Y=(V_Y,E_Y)$:
\[
V_Y=\{v_1,v_2,v_3,v_4,v_1',v_2',v_3',v_4'\}, \quad E_Y=\{e_1,e_2,e_3,e_1',e_2',e_3'\},
\]
where
\[
e_1=\{v_1,v_2,v_3\},~e_2=\{v_1,v_2,v_4\},~e_3=\{v_3,v_4'\},~ e_1'=\{v_1',v_2',v_3'\},~ e_2'=\{v_1',v_2',v_4'\},~ e_3'=\{v_3',v_4\}.
\]
The associated bipartite graph $B_Y$ is the following:
\[
B_Y=\left(  
\begin{tikzcd}[every arrow/.append style={-}, row sep=0.2cm, column sep=1cm]
v_1 \arrow[rrr]  \arrow[rrrd] & & & e_1 \\
v_2 \arrow[rrru]  \arrow[rrr]& & &e_2 \\
v_3  \arrow[rrruu]  \arrow[rrr] &  & &e_3 \\
v_4 \arrow[rrruu] \arrow[dashed, rrrddd]& & &  \\
v_1' \arrow[rrr]  \arrow[rrrd] & & & e_1'\\
v_2'   \arrow[rrru]  \arrow[rrr]& & & e_2' \\
v_3' \arrow[rrruu]  \arrow[rrr] & & & e_3'\\
v_4' \arrow[dashed, rrruuuuu] \arrow[rrruu]& & &
\end{tikzcd} 
\right)
\]
where two edges are dashed to avoid any confusion. 
One can easily see that $G=\{\textrm{id}_G,g\}\simeq \mathbb{Z}/2\mathbb{Z}$ acts freely on $Y$ (as in Definition \ref{definition: free action}) as follows:
\[
g(v_i)=v_i', \quad g(e_i)=e_i'.
\]
Hence, one obtains the quotient $X:=Y/G=(V_X,E_X)$:
\[
V_X=\{u_1,u_2,u_3,u_4\}, \quad E_X=\{f_1,f_2,f_3\} 
\]
where $u_i$ are the orbits of $v_i$ and $f_i$ are the orbits of $e_i$. To be precise, 
\[
f_1=\{u_1,u_2,u_3\}, \quad f_2=\{u_1,u_2,u_4\}, \quad f_3=\{u_3,u_4\}.
\]
We claim that $\pi:Y \to X$ is a Galois covering with the Galois group $G=\{\textrm{id}_G,g\}$. In fact, from Remark \ref{remark: inclusion remark}, there is a natural inclusion
\begin{equation}\label{eq: equality for groups}
G \hookrightarrow I=\{\sigma \in \Aut(Y) \mid \pi\circ \sigma = \pi\}.
\end{equation}
Hence, we only have to show the other inclusion. Suppose that $\varphi \in I$, i.e., we have two bijections
\[
\varphi_V:V_Y \to V_Y, \quad \varphi_E:E_Y \to E_Y,
\]
which satisfies incidence relations obtained from $Y$. To begin, one can check that there are two cases for $\varphi_V(v_1)$, namely $\varphi_V(v_1)=v_1$ or $\varphi_V(v_1)=v_1'$, since $\pi_V\circ \varphi_V = \pi_V$ and $\pi_V^{-1}(\pi_V(v_1))=\{v_1,v_1'\}$.\\

\noindent \underline{Case 1 ($\varphi_V(v_1)=v_1$):} We first observe that $\varphi_E(e_1)=e_1$. Indeed there are two cases, namely $\varphi_E(e_1)=e_1$ or $\varphi_E(e_1)=e_1'$. However, since $\varphi_V(v_1)=v_1$, we should have $v_1 \in \varphi_E(e_1)$, and hence $\varphi_E(e_1) = e_1$. It follows that
\[
\varphi(v_2)=v_2, \quad \varphi(v_3)=v_3.
\]
By applying the same argument to $v_3$ and $e_3$, we conclude that
\[
\varphi_E(e_3)=e_3, \quad \varphi_V(v_4')=v_4'.
\]
Doing this for $v_4'$ and $e_2'$ yields
\[
\varphi_E(e_2')=e_2', \quad \varphi_V(v_1')=v_1', \quad \varphi_V(v_2')=v_2'.
\]
By keeping doing this for $e_1', e_3'$ (and their vertices), we see that $\varphi$ is the identity of $\Aut(Y)$. \\

\noindent \underline{Case 2 ($\varphi_V(v_1)=v_1'$):} This is similar to the above case. In fact, in this case, we have $\varphi_E(e_1)=e_1'$ since $\varphi_V(v_1)=v_1'$. It follows that $\varphi_V(v_2)=v_2'$ and $\varphi_V(v_3)=v_3'$. Now, the same argument with $v_3$ and $e_3$ shows that $\varphi_E(e_3)=e_3'$, implying that $\varphi_V(v_4)=v_4'$. By keeping doing this, one can easily see that $\varphi$ is precisely the image of $g \in G=\{\textrm{id}_G,g\}$. Hence, we conclude that $G=I$ in \eqref{eq: equality for groups}, and hence $\pi$ is a Galois covering.

Now, the associated bipartite graph $B_X$ is the following:
\[
B_X=\left(  
\begin{tikzcd}[every arrow/.append style={-}, row sep=0.2cm, column sep=1cm]
u_1 \arrow[rrr] \arrow[rrrd]& & & f_1 \\
u_2 \arrow[rrru]  \arrow[rrr]& & &   f_2 \\
u_3  \arrow[rrruu] \arrow[rrr]  &  & &f_3 \\
u_4  \arrow[rrruu]\arrow[rrru]  & & & 
\end{tikzcd} 
\right)
\]
One can easily see that $B_Y$ is a Galois covering of $B_X$ (two-sheeted covering). In particular, we can use the two-term determinant formula to compute the L-function. In the following, we use the same notation as in \cite{terras2010zeta}. 

We first need the edge adjacency matrix $W_1$ for $B_X$. Label the edges of $B_X$ as follows:
\[
\alpha_1=(u_1,f_1),~\alpha_2=(u_2,f_1),~\alpha_3=(u_3,f_1),~\alpha_4=(u_1,f_2)
\]
\[
\alpha_5=(u_2,f_2),~\alpha_6=(u_4,f_2),~\alpha_7=(u_3,f_3),~\alpha_8=(u_4,f_3).
\]
Then, we obtain the following $16\times 16$ matrix $W_1$:
\[
W_1=\left[\begin{array}{c|c}
\mathbf{0}_{8\times 8} & A_1 \\ \hline
B_1 & \mathbf{0}_{8\times 8}
\end{array}\right]
\]
where $\mathbf{0}_{8\times 8}$ is the zero matrix of size $8\times 8$ and 
\[
A_1=\begin{bmatrix}
0& 1& 1& 0& 0& 0& 0&0 \\
1& 0& 1& 0&0 &0 &0 &0 \\
1& 1& 0& 0& 0& 0&0 &0 \\
0&0 &0 &0 &1 &1 &0 &0 \\
0&0 &0 &1 &0 &1 &0 &0 \\
0& 0& 0& 1& 1& 0& 0& 0\\
0&0 &0 &0 &0 &0 &0 &1 \\
0& 0& 0& 0& 0& 0& 1& 0
\end{bmatrix}, \quad 
B_1=\begin{bmatrix}
0&0 &0 &1 &0 &0 & 0&0 \\
0&0 &0 &0 &1 &0 &0 &0 \\
0& 0& 0& 0& 0& 0&1 &0 \\
1& 0& 0& 0& 0& 0& 0&0 \\
0& 1& 0& 0& 0& 0& 0&0 \\
0&0 &0 &0 &0 & 0& 0& 1\\
0& 0& 1& 0& 0& 0& 0& 0\\
0& 0& 0& 0& 0& 1& 0&0 
\end{bmatrix}
\]

Let $G=\{\textrm{id}_G,g\}$. Then $G$ has two irreducible one-dimensional representations, namely the trivial representation and the sign representation.\\

\noindent \underline{Case 1 (Sign representation):}
Consider the following representation of $G=\{\textrm{id}_G,g\}$:
\[
\rho:G \to \textrm{GL}_1(\mathbb{R})=\mathbb{R}^\times, \quad g \mapsto -1.
\]
Now, we fix the sheet $S_{\textrm{id}_G}$ of $B_Y$ consisting of the vertices $\{v_1,v_2,v_3,v_4,e_1,e_2,e_3\}$. For each edge $\alpha_i$, the normalized Frobenius automorphism as in \cite[Definition 16.1]{terras2010zeta} is given as follows:
\[
\sigma(\alpha_1)=1,~ \sigma(\alpha_2)=1,~ \sigma(\alpha_3)=1,~ \sigma(\alpha_4)=1, 
\]
\[
\sigma(\alpha_5)=1,~ \sigma(\alpha_6)=1,~ \sigma(\alpha_7)=1,~ \sigma(\alpha_8)=-1, 
\]
Hence, we obtained the following matrix $W_{1,\rho}$:
\[
W_{1,\rho}=\left[\begin{array}{c|c}
\mathbf{0}_{8\times 8} & A_{1,\rho} \\ \hline
B_{1,\rho} & \mathbf{0}_{8\times 8}
\end{array}\right]
\]
where $\mathbf{0}_{8\times 8}$ is the zero matrix of size $8\times 8$ and 
\[
A_{1,\rho}=\begin{bmatrix}
0& 1& 1& 0& 0& 0& 0&0 \\
1& 0& 1& 0&0 &0 &0 &0 \\
1& 1& 0& 0& 0& 0&0 &0 \\
0&0 &0 &0 &1 &1 &0 &0 \\
0&0 &0 &1 &0 &1 &0 &0 \\
0& 0& 0& 1& 1& 0& 0& 0\\
0&0 &0 &0 &0 &0 &0 &1 \\
0& 0& 0& 0& 0& 0& -1& 0
\end{bmatrix}, \quad 
B_{1,\rho}=\begin{bmatrix}
0&0 &0 &1 &0 &0 & 0&0 \\
0&0 &0 &0 &1 &0 &0 &0 \\
0& 0& 0& 0& 0& 0&1 &0 \\
1& 0& 0& 0& 0& 0& 0&0 \\
0& 1& 0& 0& 0& 0& 0&0 \\
0&0 &0 &0 &0 & 0& 0& 1\\
0& 0& 1& 0& 0& 0& 0& 0\\
0& 0& 0& 0& 0& -1& 0&0 
\end{bmatrix}
\]
So, we have
\[
L(u,\rho,B_Y/B_X)^{-1}=\det(I-uW_{1,\rho})=1-2u^4+4u^6+u^8 -4u^{10}+4u^{12} - 4u^{16}.
\]
In particular, we have
\[
L(u,\rho,Y/X)=L(\sqrt{u},\rho,B_Y/B_X)=\frac{1}{1-2u^2+4u^3+u^4-4u^5+4u^6-4u^8}
\]
\[
=\frac{1}{(1-u)(1+u)^2(1-2u+2u^2)(1+u+2u^3)}.
\]
\bigskip

\noindent\underline{Case 2 (Trivial representation):} In this case, we have $\sigma(\alpha_i)=1$ for all $i$. Hence, we have $W_{1,1_G}=W_1$. So, we compute:
\[
L(u,1_G,B_Y/B_X)^{-1}=\det(I-uW_{1,1_G})=-4u^{16}+4u^{12}+4u^{10}+u^8-4u^6-2u^4+1.
\]
Hence, we have
\[
L(u,1_G,Y/X)=L(\sqrt{u},1_G,Y/X)=\frac{1}{(1-u)^2(1+u)(1+2u+2u^2)(1-u-2u^3)}.
\]

Next, to compute the zeta function $\zeta_X(u)$, we compute $\zeta_{B_X}(u)$ by using the determinant formula (see \cite[Theorem 2.5]{terras2010zeta}):
\[
\zeta_{B_X}(u)^{-1}=(1-u^2)^{r-1}\det (I-Au+Qu^2),
\]
where $r=|E|-|V|+1=2$, $A$ is the adjacency matrix of $B_X$ and $Q$ is the degree matrix of $B_X$. So, we have
\[
\zeta_{B_X}(u)=\frac{1}{(1-u^2)(4u^{14}+4u^{12}-4u^8-5u^6-u^4+u^2+1)},
\]
and hence
\[
\zeta_X(u)=\frac{1}{(1-u)(4u^{7}+4u^{6}-4u^4-5u^3-u^2+u+1)}=\frac{1}{(1-u)^2(1+u)(1-u-2u^3)(1+2u+2u^2)}.
\]
In particular, we see that $\zeta_x(u)=L(u,1_G,Y/X)$.

Finally, we compute $\zeta_Y(u)$ to confirm that $\zeta_Y(u)=L(u,1_G,Y/X)\cdot L(u,\rho,Y/X)$. By using the three-term determinant formula, we have
\[
\zeta_{B_Y}(u)^{-1}=
\]
\[
(1-u^2)^2(1+2u^2-u^4-4u^6-u^8+2u^{10}-7u^{12}-16u^{14}-16u^{16}-16u^{18}-8u^{20}+16u^{24}+32u^{26}+16u^{28}).
\]
Hence, we have
\[
\zeta_Y(u)^{-1}=
\]
\[
(1-u)^2(1+2u-u^2-4u^3-u^4+2u^5-7u^6-16u^7-16u^8-16u^9-8u^{10}+16u^{12}+32u^{13}+16u^{14})
\]
\[
=(1-u)^3(1+u)^3(1-2u+2u^2)(1+2u+2u^2)(1-u-2u^3)(1+u+2u^3).
\]
Hence, we have
\[
\zeta_Y(u)=\frac{1}{(1-u)^3(1+u)^3(1-2u+2u^2)(1+2u+2u^2)(1-u-2u^3)(1+u+2u^3)}.
\]
In particular, this shows that
\[
\zeta_Y(u)=L(u,1_G,Y/X)\cdot L (u,\rho,Y/X).
\]
\end{myeg}

The following is an example with hypergraphs with degree-1 vertices.

\begin{myeg}
Let $G$, $Y$, and $X$ be as in Example \ref{example: quotient used}. We denote $G$ by $\{\textrm{id}_G,g\}$. One may first observe that no prime cycles in $X=Y/G$ can contain $u_3$ since it would yield edge-backtracking. Hence, the only prime cycles in $X$ are represented by $C=(u_1,f_1,u_2,f_2,u_1)$ and $C'=(u_1,f_2,u_2,f_1,u_1)$. Using the sheet partitioning in Example \ref{example: sheet partition}, both $C$ and $C'$ have Frobenius element $g$. 

There are two inequivalent irreducible representations of $G$: the trivial representation and the sign representation $\rho$. For the trivial representation, we use Proposition \ref{proposition: trivial rep} to compute the $L$-function:
\[
L(u,\rho,Y/X)=\zeta_{X}(u)=\prod_{[P]} (1-u^{\nu(P)})^{-1}=(1-u^2)^{-2}.
\]
For the sign representation, we have
\[
L(u,\rho,Y/X)=\prod_{\mathfrak{c}} \det (1-\rho(F(C,Y/X))u^{\ell(\mathfrak{c})})^{-1}=\det (1-(-1)u^2)^{-2}=(1+u^2)^{-2}.
\]
Next, we consider prime cycles in $Y$. No prime cycle in $Y$ can contain $v_5$ or $v_6$ as that will yield edge-backtracking. Hence the only prime cycles in $Y$ are  represented by
\[
C=(v_1,e_4,v_3,e_1,v_2,e_3,v_4,e_2,v_1) \quad \textrm{and} \quad C'=(v_1,e_2,v_4,e_3,v_2,e_1,v_3,e_4,v_1).
\]
Therefore, we have
\[
\zeta_Y(u)=\prod_{[P]} (1-u^{\nu(P)})^{-1}=(1-u^4)^{-2}.
\]
Now, we check that 
\[
\zeta_Y(u)=\prod_{\rho \in \widehat{G}} L(u,\rho,Y/X)^{d_\rho}=((1-u^2)^{-2})^1((1+u^2)^{-2})^1=(1-u^4)^{-2}.
\]
\end{myeg}

\bigskip
\bigskip

\bibliography{zeta}\bibliographystyle{alpha}

\end{document}